\documentclass[english,12pt]{amsart}
\usepackage{amssymb}
\usepackage{amsfonts}
\usepackage{mathtools}
\usepackage{amscd}
\usepackage{pb-diagram}
\usepackage[all]{xy}
\usepackage{color}
\usepackage{enumerate}
%accenten:
\usepackage[utf8]{inputenc}
\usepackage{stmaryrd}
\usepackage{todonotes}

\usepackage[T1]{fontenc}
\RequirePackage[colorlinks=true, breaklinks=true, urlcolor= black, linkcolor= black, citecolor= black, bookmarksopen=true,linktocpage=true,plainpages=false,pdfpagelabels]{hyperref}

\textwidth=15cm \oddsidemargin=5mm \evensidemargin=5mm
\textheight=21.5cm
\parindent=0.5cm
%\linespread{1}

\CompileMatrices

\swapnumbers

\def\deg{{\rm deg}}
\def\rad{\operatorname{rad}}

\def\radop{\rad_{\mathrm{op}}}
\def\radcl{\rad_{\mathrm{cl}}}
\def\ord{{\rm ord}}

\def\val{{\mathrm{val}}}
\def\Lval{\cL_\val}

 % \cL with an RV_I-expansion

 % a temporarily introduced arbitrary language
\def\LZpan{\cL_{\ZZ_p\langle x \rangle}} %analytic language (on QQ_p((t))) induced by strictly convergent power series over ZZ_p

  % "Basic language", consisting only of {0,+,-,rv_I}. (I defined it here so that we can still change notation)
 % \Lbas with an RV_I-expansion
%\def\LbasLA{\cL_{\QQ,\anyI,\cL(A)}}  % \Lbas expanded by the L(A)-induced structure on RV_I
  % \Lbas expanded by the L(A)-induced structure on RV_I

  % index for the equichar valuation
\def\fine{{\mathrm{fine}}}  % index for the finer valuation

  % index for o-minimal theory and language

\def\11{{\mathbf 1}}
\def\AA{{\mathbb A}}

\def\CC{{\mathbb C}}

\def\FF{{\mathbb F}}

\def\NN{{\mathbb N}}

\def\PP{{\mathbb P}}
\def\QQ{{\mathbb Q}}
\def\RR{{\mathbb R}}

\def\ZZ{{\mathbb Z}}

\def\cB{{\mathcal B}}

\def\cL{{\mathcal L}}
\def\cM{{\mathcal M}}

\def\cO{{\mathcal O}}

\def\cT{{\mathcal T}}

\mathchardef\alphag="7C0B \mathchardef\betag="7C0C
\mathchardef\gammag="7C0D \mathchardef\deltag="7C0E
\mathchardef\varepsilong="7C22 \mathchardef\varphig="7C27
\mathchardef\psig="7C20 \mathchardef\zetag="7C10
\mathchardef\epsilong="7C0F \mathchardef\rhog="7C1A
\mathchardef\taug="7C1C \mathchardef\upsilong="7C1D
\mathchardef\iotag="7C13 \mathchardef\thetag="7C12
\mathchardef\pig="7C19 \mathchardef\sigmag="7C1B
\mathchardef\etag="7C11 \mathchardef\omegag="7C21
\mathchardef\kappag="7C14 \mathchardef\lambdag="7C15
\mathchardef\mug="7C16 \mathchardef\xig="7C18
\mathchardef\chig="7C1F \mathchardef\nug="7C17
\mathchardef\varthetag="7C23 \mathchardef\varpig="7C24
\mathchardef\varrhog="7C25 \mathchardef\varsigmag="7C26
\mathchardef\Omegag="7C0A \mathchardef\Thetag="7C02
\mathchardef\Sigmag="7C06 \mathchardef\Deltag="7C01
\mathchardef\Phig="7C08 \mathchardef\Gammag="7C00
\mathchardef\Psig="7C09 \mathchardef\Lambdag="7C03
\mathchardef\Xig="7C04 \mathchardef\Pig="7C05
\mathchardef\Upsilong="7C07

\newtheorem{theorem}[subsubsection]{Theorem}
\newtheorem{thm}[subsubsection]{Theorem}
\newtheorem{lem}[subsubsection]{Lemma}
\newtheorem{cor}[subsubsection]{Corollary}
\newtheorem{prop}[subsubsection]{Proposition}

%[subsubsection]
{Addendum}

\theoremstyle{definition}
\newtheorem{defn}[subsubsection]{Definition}

\newtheorem{example}[subsubsection]{Example}

\newtheorem{def-prop}[subsubsection]{Proposition-Definition}
\newtheorem{def-theorem}[subsubsection]{Theorem-Definition}
\newtheorem{def-lem}[subsubsection]{Lemma-Definition}

\theoremstyle{remark}
\newtheorem{remark}[subsubsection]{Remark}

\theoremstyle{plain}

\numberwithin{equation}{subsection}

\def\boxit#1#2{\setbox1=\hbox{\kern#1{#2}\kern#1}%
\dimen1=\ht1 \advance\dimen1 by #1 \dimen2=\dp1 \advance\dimen2 by
#1
\setbox1=\hbox{\vrule height\dimen1 depth\dimen2\box1\vrule}%
\setbox1=\vbox{\hrule\box1\hrule}%
\advance\dimen1 by .4pt \ht1=\dimen1 \advance\dimen2 by .4pt
\dp1=\dimen2 \box1\relax}

\renewcommand{\theequation}{\thesubsection.\arabic{equation}}

\mathchardef\alphag="7C0B \mathchardef\betag="7C0C
\mathchardef\gammag="7C0D \mathchardef\deltag="7C0E
\mathchardef\varepsilong="7C22 \mathchardef\varphig="7C27
\mathchardef\psig="7C20 \mathchardef\zetag="7C10
\mathchardef\epsilong="7C0F \mathchardef\rhog="7C1A
\mathchardef\taug="7C1C \mathchardef\upsilong="7C1D
\mathchardef\iotag="7C13 \mathchardef\thetag="7C12
\mathchardef\pig="7C19 \mathchardef\sigmag="7C1B
\mathchardef\etag="7C11 \mathchardef\omegag="7C21
\mathchardef\kappag="7C14 \mathchardef\lambdag="7C15
\mathchardef\mug="7C16 \mathchardef\xig="7C18
\mathchardef\chig="7C1F \mathchardef\nug="7C17
\mathchardef\varthetag="7C23 \mathchardef\varpig="7C24
\mathchardef\varrhog="7C25 \mathchardef\varsigmag="7C26
\mathchardef\Omegag="7C0A \mathchardef\Thetag="7C02
\mathchardef\Sigmag="7C06 \mathchardef\Deltag="7C01
\mathchardef\Phig="7C08 \mathchardef\Gammag="7C00
\mathchardef\Psig="7C09 \mathchardef\Lambdag="7C03
\mathchardef\Xig="7C04 \mathchardef\Pig="7C05
\mathchardef\Upsilong="7C07

\newcommand{\RV}{\mathrm{RV}}

\newcommand{\rv}{\operatorname{rv}}

\newcommand{\Th}{\operatorname{Th}}

\DeclareMathOperator*{\sgn}{{sgn}}
\DeclareMathOperator*{\LT}{LT}

\newcommand{\trans}{{\mathrm{trans}}}

\newcommand{\cdim}{\#\text{-}\dim}
\newcommand{\saij}{\sum_{j =k_i}^{+\infty} a_{ij} t^j} %shortcut for section on QQ_p((t))
 %shortcut macro used in section on QQ_p((t))

\def\ord{{\rm ord}}

%Absolute value notation
\newcommand{\abs}[1]{\lvert#1\rvert}

%    Blank box placeholder for figures (to avoid requiring any
%    partular graphics capabilities for printing this document).

\definecolor{immi}{rgb}{0,.6,.1}

\newbox\removebox
\newcommand\remove[1]{%
\setbox\removebox=\ifmmode\hbox{$#1$}\else\hbox{#1}\fi%
\leavevmode
\rlap{\textcolor{blue}{\vrule height0.8ex depth-0.6ex width\wd\removebox}}%
\box\removebox
}
\long\def\bigremove#1{%
\par\setbox\removebox=\vbox{#1}%
\vbox{%
\vbox to0pt{\hbox{\tikz\draw[color=blue,thick] (0,0) -- (\wd\removebox,-\ht\removebox)  (\wd\removebox,0) -- (0,-\ht\removebox);}}
\box\removebox
}
}

%Silvain's Macros

\newcommand\acl{\mathrm{acl}}
\newcommand\dcl{\mathrm{dcl}}
%\newcommand\alg[1]{#1^{\mathrm{alg}}}

%%% Comments

\definecolor{orange}{rgb}{1,0.5,0}

\newcommand{\private}[1]{\leavevmode{\scriptsize\color{blue}\marginpar{{\scriptsize Private comment}}#1\par}}
% The following line deactivates our private comments...:
\renewcommand{\private}[1]{}

\subjclass[2010]{Primary 03C99, 14G05; Secondary 03C65, 11G50, 11D88, 03C98}

\keywords{Pila--Wilkie theorem, non-archimedean geometry, tame geometry on henselian valued fields, analogues to o-minimality, model theory of valued fields, rational points of bounded height, parametrizations}

\thanks{The author V.C-F.\ was partially supported by KU Leuven IF C14/17/083. The author K.H.N. was partially supported by FWO Flanders (Belgium) with grant number 12X3519N and by the Excellence Research Chair “FLCarPA: L-functions in positive characteristic and applications” financed by the Normandy Region. The author M.S.\ was supported by KU Leuven. The author F.V.\ was partially supported by KU Leuven IF C14/17/083, and partially by F.W.O.\ Flanders (Belgium) with grant number 11F1921N}

\title{A Pila--Wilkie theorem for Hensel minimal curves}

\author[Cantoral-Farf\'an]{Victoria Cantoral Farf\'an}
\address{Victoria Cantoral-Farf\'an, Georg-August Universit\"at G\"ottingen, Mathematisches Institut, Bunsenstrasse 3-5, 37073 G\"ottingen, Deutschland}
\email{victoria.cantoralfarfan@mathematik.uni-goettingen.de}
\urladdr{https://sites.google.com/view/victoriacantoral}

\author[Nguyen]{Kien Huu Nguyen}
\address{KU Leuven, Department of Mathematics,
	Celestijnenlaan 200B, B-3001 Leu\-ven, Bel\-gium, and, Normandie Université, Université de Caen Normandie - CNRS, Laboratoire de Mathématiques Nicolas Oresme (LMNO),UMR 6139, 14000 Caen, France}
\email{kien.nguyenhuu@kuleuven.be, huu-kien.nguyen@unicaen.fr}
\urladdr{https://sites.google.com/site/nguyenkienmath/home}

\author[Stout]{Mathias Stout}
\address{Mathias Stout, KU Leuven, Department of Mathematics, B-3001 Leu\-ven, Bel\-gium}
\email{mathias.stout@kuleuven.be}

\author[Vermeulen]{Floris Vermeulen}
\address{Floris Vermeulen, KU Leuven, Department of Mathematics, B-3001 Leu\-ven, Bel\-gium}
\email{floris.vermeulen@kuleuven.be}
\urladdr{https://sites.google.com/view/floris-vermeulen}

\begin{document}

\begin{abstract}
Recently, a new axiomatic framework for tameness in henselian valued fields was developed by Cluckers, Halupczok, Rideau-Kikuchi and Vermeulen and termed Hensel minimality. In this article we develop Diophantine applications of Hensel minimality. We prove a Pila--Wilkie type theorem for transcendental curves definable in Hensel minimal structures. In order to do so, we introduce a new notion of point counting in this context related to dimension counting over the residue field. We examine multiple classes of examples, showcasing the need for this new dimension counting and prove that our bounds are optimal. 
\end{abstract}

\maketitle

\section{Introduction}\label{sec:introduction}

\subsection{The Pila-Wilkie theorem}
In 1989, Bombieri and Pila developed a very fruitful method to count integral and rational points on various types of geometric objects in $\RR^2$~\cite{Bombieri-Pila}. This method is now called the determinant method and is especially well-suited for proving uniform upper bounds on points of bounded height. For a subset $X\subseteq \RR^n$, recall that the counting function is defined as
\[
N(X; B) = \# \{ x \in X\cap \QQ^n \mid H(x)\leq B\},
\]
where $H(a_1/b_1, \ldots, a_n/b_n) = \max(\abs{a_i},\abs{b_i})$ when $\gcd(a_i, b_i) = 1$ for all $i$. For example, if $f: [0,1]\to [0,1]$ is an analytic transcendental function and $X$ denotes its graph, Bombieri and Pila proved that for any $\varepsilon>0$, there is a constant $c_\varepsilon$ such that for all $B > 0$
\[
N(X; B)\leq c_\varepsilon B^\varepsilon.
\]
A vast generalization of this result is the celebrated Pila--Wilkie theorem \cite{PW}. It states that, if $X\subseteq \RR^n$ is definable in an o-minimal structure, then for any $\varepsilon> 0$, there exists a constant $c_\varepsilon$ such that for all $B >0$
\begin{equation} \label{eq:PW}
	N(X^\trans ; B)\leq c_\varepsilon B^\varepsilon.
\end{equation}
Here $X^\trans$ denotes the transcendental part of $X$, obtained from $X$ by removing all positive-dimensional connected semialgebraic subsets of $X$. The proof of this result is based heavily on the existence of $C^r$-parametrizations, which were originally developed by Gromov and Yomdin \cite{YY, YY2, gromov}.

In the non-archimedean setting, such parametrization results were first proved in \cite{CCL-PW}, and a corresponding Pila--Wilkie theorem was obtained for subanalytic sets in $\QQ_p$. These results were further improved in \cite{CFL}, where a uniform version of these bounds was proved for subanalytic sets in $\QQ_p$ and $\FF_p((t))$.

\subsection{Hensel minimality}
Hensel minimality, or h-minimality for short, is a recent framework for tame non-archimedean geometry, developed by  Cluckers, Halupczok and Rideau-Kikuchi in equicharacteristic zero in \cite{CHR} and extended to mixed characteristic together with the fourth author in \cite{CHRV}. It encompasses the aforementioned analytic structure on $\QQ_p$ as a special case, but it applies more broadly, see e.g. \cite[Sec.\,6]{CHR} for several examples. 

Hensel minimality bears a striking resemblance to the classical theory of o-minimality. In an o-minimal structure $K$, each definable subset $X \subseteq K$ is a finite union of intervals and points. In other words, there is some finite tuple $(a_i)_{i \in I}$ such that $X$ is a union of fibres of the map $x \mapsto (\sgn(x-a_i))_{i \in I}$. Roughly speaking, h-minimality replaces the sign map by the \emph{leading term} map $\rv \colon K \to K^{\times}/(1 +\cM_K) \cup \{0\}$, where $K$ is a valued field and $\cM_K$ is the maximal ideal of its valuation ring. 

The goal of this article is to develop an analogue of the Pila--Wilkie theorem in an h-minimal context. For this purpose, we need two important consequences of Hensel minimality: a cell decomposition statement (Theorem \ref{thm:celldecomp.1.lipschitz}) and the Jacobian property (Theorem \ref{thm: jacobian property}). These theorems are analogues of o-minimal cell decomposition and the monotonicity theorem, respectively. We use them to prove a \emph{$T_r$-parametrization} statement for curves definable in Hensel minimal structures (Theorem \ref{thm:Tr.param.curves}). These $T_r$-parametrizations are analogues of the $C^r$-parametrizations used in the proof of the o-minimal Pila-Wilkie theorem, and form a key technical ingredient.

\subsection{Counting dimension}
Let $k$ be a field of characteristic zero and denote by $k((t))$ the field of Laurent series over $k$. For any natural number $s$, we let $k[t]_s \subseteq k((t))$ be the set of polynomials in $t$ of degree less than $s$. For (transcendental definable) curves $X \subseteq k((t))^n$ we study the number of points on $X_s \coloneqq X \cap (k[t]_s)^n$ as a valued field analogue of rational points of bounded height. In particular, we are interested in bounding the growth in terms of $s$, similar to the Pila-Wilkie theorem.

When $k = \CC$ and $X$ is a transcendental curve, definable in the subanalytic structure on $\CC((t))$, then $X_s$ is finite, for each $s \in \NN$~\cite[Thm.\,1]{BCN}. 
However, this result does not carry over to arbitrary h-minimal structures and in Section \ref{sec:examples} we give explicit examples of transcendental curves $X$, definable in some h-minimal structure, with infinite $X_s$. This issue is resolved by counting relative to the residue field. More precisely, we introduce the notion of \emph{counting dimension}, denoted by $\cdim$, and consider bounds of the form
\begin{equation} \label{eq:cdim}
\cdim(X_s)  \leq (N(s),d,e(s)),
\end{equation}
where $N$ and $e$ are functions $\NN \to \NN$ and $d \in \NN$ is constant.
Intuitively, the above inequality can be thought of as a means of expressing that
\[
\# X_s \leq N(s) \# (k[t]/(t^{e(s)}))^d,
\] 
even when $k$ is infinite. When $N = N(s)$ is constant, $d \cdot e(s)$ can be thought of as bounding the growth of the $k$-dimension of $X_s$. 

For $k = \FF_p$, note that the right-hand side becomes $ N(s) p^{d e(s)}$. Comparing this to the known results for transcendental definable curves in $\FF_p((t))$ (\cite[Thm.\,B]{CFL}), leads to the following question for transcendental curves $X \subseteq k((t))^n$: if $\varepsilon > 0$ is given, can we take $N = N_{\varepsilon}$ constant, $d =1$ and $e(s) = \lceil \varepsilon \cdot s \rceil$ in (\ref{eq:cdim})? 
We stress that the importance of the counting dimension is that it makes this question meaningful when $k$ is infinite. 

We will return to the motivations for the counting dimension, after precisely defining it for any henselian valued field $K$ (and not just $k((t))$). Theorem \ref{thm:CC-c-dim} then positively answers our question above: if $K$ is an equicharacteristic zero henselian valued field which is h-minimal, satisfying some mild extra conditions, then for any transcendental definable curve $X \subseteq K^n$ and any $\varepsilon > 0$ there exists some constant $N_{\varepsilon}>0 $ such that
\[
\cdim(X_s) \leq (N_{\varepsilon},1, \lceil \varepsilon \cdot s \rceil).
\]
We moreover show that this bound is optimal, by constructing certain transcendental definable curves of a specific form.

Additionally, we consider the case of algebraic curves in Theorem \ref{thm:alg.curves}, and prove the analogue of the classical Bombieri--Pila bound here. Let us also mention that the main obstacle in extending these results to higher dimensions is that these parametrization results are only known for curves under Hensel minimality. Especially the higher-dimensional geometry under Hensel minimality has to develop further.

%\subsection*{Structure} This article starts by introducing our main definitions section \ref{sec: main results}, and stating our main results. In section \ref{sec:notations and background} we recall some relevant material, especially on Hensel minimality and $T_r$-approximation. In section \ref{sec:counting} we prove our main theorems. Then in  \ref{sec:importance of counting dimension} we \textellipsis\todo{Todo after extra contribution of Kien is added}. Finally, we consider in Section \ref{sec:QQpt} a specific analytic structure on $\QQ_p((t))$, where the counting dimension of any transcendental definable curve $X \subseteq \QQ_p((t))^n$ can be bounded by a triple of constants, even when $X _1$ is already infinite.

\subsection*{Acknowledgements} The authors would like to thank Raf Cluckers for suggesting this topic and the various discussions surrounding it. The authors also thank the anonymous referee for the many helpful comments which improved this article.

\section{The counting dimension}\label{sec: main results}

\subsection{Counting in valued fields}\label{sec: counting in valued fields}

In this section we introduce the counting dimension and prove some basic results about it. We then state our main results on the counting dimension of transcendental and algebraic curves. 

Let $K$ be a non-archimedean valued field equipped with an $\cL$-structure, for some language $\cL$ expanding the language of valued fields $\cL_\val = \{0,1, +, \cdot, \cO_K\}$. For $A\subset K$, a set $X\subset K^n$ is called $A$-definable if it is definable in $\cL$, using parameters only from $A$. We call a set definable if it is $K$-definable. We denote by $k$ the residue field of $K$, by $\cO_K$ the valuation ring and by $\cM_K$ the maximal ideal. Let $\Gamma_K^\times$ be the value group, where the valuation is written multiplicatively $|\cdot|: K\to \Gamma_K$.

Assume that $K$ is henselian of equicharacteristic zero. Then there always exists a lift $\tilde{k}\subset K$ of the residue field $k$, i.e.\ a subfield of $K$ which maps bijectively to $k$ under the reduction map $\cO_K\to k$. Fix also a \emph{pseudo-uniformizer} $t\in K$, recall that this is any non-zero element of $K$ with $|t|<1$. For a positive integer $s$, define
\[
\tilde{k}[t]_s = \left\{\sum_{i=0}^{s-1} a_it^i\mid a_i\in \tilde{k}\right\}.
\]
If $X$ is a subset of $K^n$ we define $X_s$ to be $X\cap \tilde{k}[t]_s^n$. We call $X_s$ the \emph{set of rational points of height at most $s$ on $X$}. Note that this set depends on the choice of pseudo-uniformizer $t$ and on the lift $\tilde{k}$. 

The prototypical example to keep in mind is $K=k((t))$ for some characteristic zero field $k$, with $\tilde{k}=k$ and $t$ as pseudo-uniformizer. Here $\tilde{k}[t]_s$ is simply the set of polynomials over $k$ of degree at most $s-1$. 

The set $X_s$ is considered as a suitable analogue for the set of rational points of bounded height on $X$, where the height is captured by $s$. We will be interested in bounding the size of $X_s$, as $s$ grows, for various types of subsets of $K^n$. We cannot simply use the number of points on $X_s$ as a measure of size, since this set will typically be infinite. Instead we introduce the \emph{counting dimension}, to measure the size of $X_s$ relative to the residue field.

\begin{defn}\label{def:counting.dim}
Let $K$ be a henselian valued field of equicharacteristic zero equipped with an $\cL$-structure, for some language $\cL$ expanding the language of valued fields. Fix a pseudo-uniformizer $t$ of $K$ and a lift $\tilde{k}$ of the residue field. Let $X$ be a subset of $K^n$, let $d$ be a positive integer and let $N, e: \NN\to \NN$ be functions. Then we say that $X$ has \emph{counting dimension bounded by $(N, d, e)$} if there exists a definable function $f: X\to \cO_K^d$ such that for every positive integer $s$, the composition
\[
X_s\xrightarrow{f} \cO_K^d \xrightarrow{\operatorname{proj}} \left( \frac{\cO_K}{(t^{e(s)})} \right)^d
\]
has finite fibres of size at most $N(s)$. Here $\operatorname{proj}$ is the componentwise reduction map modulo $t^{e(s)}$. 

We will use the notation
\[
\cdim X_s \leq (N(s), d, e(s) )
\]
to mean that the counting dimension of $X$ is bounded by $(N,d,e)$.
\end{defn}

This definition depends on the choice of pseudo-uniformizer $t$, the lift $\tilde{k}$ and on the language $\cL$. However, we suppress these in notation and always assume a fixed choice of $t, \tilde{k}$ and $\cL$. 
%Note that the counting dimension of any set $X \subseteq K^n$ is always bounded by $(1,n,s)$.
%If $e$ is not integer-valued, then we say that a set has counting dimension at most $(N,d,e)$ if it has counting dimension at most $(N,d,\lceil e \rceil)$.

Our definition of the counting dimension is motivated on the one hand by counting rational points on definable subsets in local fields of mixed characteristic as in \cite{CCL-PW, CFL, CHRV}. In that case, one has to use a different notion of rational points of bounded height, as there is no lift of the residue field. Consider for example $K=\QQ_p$. Then, if $X$ is a subset of $\QQ_p^n$, define
\[
X_s = \{x\in X\cap \ZZ^n\mid 0\leq x_i\leq s \text{ for all } i \}.
\]
Using this definition of rational points, if $X$ has counting dimension bounded by $(N(s), d, e(s))$, then for every $s$, $X_s$ will contain no more than $N(s)p^{de(s)}$ points. So a bound on the counting dimension gives a corresponding bound on the number of points in $X_s$.

Our second motivation comes from the relation of the counting dimension to the Zariski dimension over the residue field, as in \cite[Sec.\,5]{CCL-PW}. Let $K=\CC((t))$ and let $X$ be a subset of $K^n$. Then for every $s$, $X_s$ is a subset of $\CC[t]_s$, which can be naturally identified with $\CC^{ns}$. If $X$ is algebraic, then $X_s$ will be a constructible set in $\CC^{ns}$, and one can wonder how the Zariski dimension of $X_s$ grows with $s$. In \cite[Sec.\,5]{CCL-PW}, a bound for this quantity is provided. We obtain a similar bound using the counting dimension instead of the Zariski dimension.

%If $\cL = \Lval$, then an inequality of the form
%\[
%\cdim(X_s) \leq (N(s),d,e(s))
%\]
%implies, for any $s$, the existence of a semialgebraic map from $X_s \subseteq \CC^{ns}$ to $\CC^{d e(s)}$ with fibers of size at most $N(s)$.
%In particular, the semialgebraic dimension of $X_s$ over $\CC$ is at most $d \cdot e(s)$. %todo: give some more justification why this works for semialgebraic in particular: crux is that $C[t]/(t^e) = C^e$ is semialgebraic
\subsection{Main results}

Our main results concern the counting dimension of algebraic and transcendental curves definable in h-minimal structures.

By a \emph{curve $C\subset K^n$} we mean a set for which there exists a linear map $p:K^n\to K$ such that $p(C)$ is infinite and such that $p$ has finite fibers on $C$. If the theory of $K$ in $\cL$ is 1-h-minimal, see \cite[Def.\,2.3.3]{CHR} or section \ref{sec: henselminimality} below, then by dimension theory \cite[Thm.\,5.3.4]{CHR}, a definable curve in $K^n$ is the same as a definable set of dimension 1. We call a curve $C\subset K^n$ \emph{transcendental} if every algebraic curve in $K^n$ has finite intersection with $C$. 

Let $K$ be a henselian valued field equipped with an $\cL$-structure and assume that $\Th_\cL(K)$ is 1-h-minimal. Then we say that $\acl=\dcl$ for $K$ if algebraic Skolem functions exist in every model of $\Th_\cL(K)$. By this we mean that for any model $K'$ of $\Th_\cL(K)$ and any subset $A\subset K'$ we have that $\acl_{K'}(A) = \dcl_{K'}(A)$. 
%Equivalently, for any positive integers $n$ and $m$, if $X\subset K^{n+1}$ is a $\emptyset$-definable set such that the coordinate projection $p: X\subset K^{n+1} \to K^n$ on the first $n$ coordinates has fibres of cardinality precisely $m>0$, then there exists a $\emptyset$-definable function $f: K^n\to K$ whose graph is contained in $X$. 

Our main result is the following analogue of the Pila--Wilkie theorem on the counting dimension of transcendental curves definable in Hensel minimal structures.

\begin{thm}\label{thm:CC-c-dim}
Suppose that $K$ is a henselian valued field of equicharacteristic $0$ equipped with a 1-h-minimal structure. Fix a pseudo-uniformizer $t$ and a lift of the residue field $\tilde{k}$. Suppose that $\acl=\dcl$ in $K$ and that the subgroup of $b$-th powers in $k^\times$ has finite index, for some integer $b>1$. Let $C\subset \cO_K^n$ be a transcendental definable curve. Then for each $\varepsilon>0$ there is a constant $N$ such that for each integer $s\geq 0$
\[
\#\text{-}\dim (C_s) \leq (N,1,\lceil \varepsilon\cdot s \rceil).
\]  
Furthermore, the constant $N$ can be taken to hold uniformly throughout all transcendental members of a given definable family of definable curves. 
\end{thm}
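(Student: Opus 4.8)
The plan is to follow the blueprint of the classical Pila--Wilkie proof, but with the two ``model-theoretic engines'' of Hensel minimality (cell decomposition, Theorem~\ref{thm:celldecomp.1.lipschitz}, and the Jacobian property, Theorem~\ref{thm: jacobian property}) replacing o-minimal cell decomposition and the monotonicity theorem, and with the non-archimedean $T_r$-parametrization statement (Theorem~\ref{thm:Tr.param.curves}) replacing the $C^r$-parametrizations of Gromov--Yomdin. First I would reduce to the case $n=2$: a curve $C\subset\cO_K^n$ admits, by definition, a coordinate projection $p$ with infinite image and finite fibres, and after an $\acl=\dcl$/cell-decomposition argument one can break $C$ into finitely many pieces each of which is the graph of a definable function of one variable; since a definable piece is transcendental iff the corresponding graph is, and $\cdim$ is subadditive over finite partitions, it suffices to bound $\cdim$ for the graph $\Gamma(g)$ of a definable transcendental function $g\colon D\to\cO_K$, $D\subset\cO_K$.

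Next I would set up the counting step. For a given $\varepsilon>0$ choose $r$ large (depending only on $\varepsilon$ and the degrees involved, as in the o-minimal case $r \sim 1/\varepsilon$) and apply the $T_r$-parametrization theorem to get finitely many definable maps $\phi_j\colon\cO_K\to\Gamma(g)$ covering $C$ whose ``derivatives up to order $r$'' are bounded, i.e.\ the Taylor coefficients of $\phi_j$ up to order $r$ lie in $\cO_K$. The key Diophantine input is then a non-archimedean determinant/Thue--Siegel lemma: if $x_1,\dots,x_m\in\tilde k[t]_s^2$ all lie on one piece $\phi_j$ and $m$ is of controlled size, then the $m\times m$ ``Vandermonde-type'' determinant formed from monomials of bounded degree, evaluated at the $x_i$, is divisible by a large power of $t$ --- precisely a power growing like $r\cdot s$ --- by the usual Taylor-expansion-and-mean-value estimate, now carried out $t$-adically using the Jacobian property to control the relevant valuations. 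On the other hand this determinant has entries in $\tilde k[t]_{O(s)}$, so if it is nonzero it cannot be divisible by too high a power of $t$; choosing $r$ so that $r s$ beats the a priori bound forces the determinant to vanish, hence all points $x_1,\dots,x_m$ lie on a single algebraic curve of bounded degree.

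Then I would run the standard ``algebraic points are controlled'' argument: either $\Gamma(g)$ meets this algebraic curve in finitely many points (contributing to a bounded count because $C$ is transcendental), or it contains a whole positive-dimensional algebraic piece --- impossible by transcendentality. Covering $C_s$ by blocks of $m$ points on each $\phi_j$ and using that there are boundedly many $\phi_j$, one gets that $C_s$ is contained in the union of: (i) a set of at most $N_\varepsilon$ points, and (ii) finitely many fibres of a definable map into $(\cO_K/(t^{\lceil\varepsilon s\rceil}))^1$ --- which is exactly what $\cdim(C_s)\le(N,1,\lceil\varepsilon s\rceil)$ asserts, with the witnessing definable function $f\colon C\to\cO_K$ being (the relevant coordinate of) the parametrization data. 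For the uniformity statement I would note that all the ingredients --- cell decomposition, the Jacobian property, and $T_r$-parametrization --- hold with constants uniform in definable families by the corresponding uniformity clauses in Theorems~\ref{thm:celldecomp.1.lipschitz}, \ref{thm: jacobian property} and \ref{thm:Tr.param.curves}, together with a compactness argument over the family parameter; the condition that $b$-th powers have finite index in $k^\times$ is used to make the ``finitely many $\phi_j$'' and the degree bounds effective in the family, as it rigidifies the residue-field geometry entering the parametrization.

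The main obstacle I expect is the non-archimedean determinant estimate: in the real case one uses the mean value theorem and compactness of $[0,1]$ to bound $|\det|$ by $B^{-cr}$, but here one must instead show $t$-adic divisibility of the determinant by $t^{cr s}$ using only the Jacobian property (which controls $|\,g(x)-g(y)\,|$ in terms of $|x-y|$ on suitable pieces) and the $T_r$-bounds on Taylor coefficients; making the bookkeeping of valuations match the ``height'' parameter $s$ of points in $\tilde k[t]_s$, and ensuring the pieces $\phi_j$ are defined over a ring small enough that evaluation at $\tilde k[t]_s$-points stays in $\tilde k[t]_{O(s)}$, is the delicate core. A secondary difficulty is that, unlike $\RR$, the curve $C$ need not have finite $C_s$ at all (as the examples in Section~\ref{sec:examples} show), so one cannot hope to bound $\#C_s$; the whole point of packaging the conclusion as a bound on $\cdim$ is to absorb this, and one must be careful that the definable function $f$ produced by the argument genuinely has the fibre-size property $N(s)$ after reduction mod $t^{\lceil \varepsilon s\rceil}$, rather than merely mod $t^{s}$.
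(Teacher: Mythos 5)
Your proposal does follow the paper's proof in all its structural components: reduce to a planar curve, apply the $T_r$-parametrization theorem \ref{thm:Tr.param.curves}, run a $t$-adic determinant method to force the rational points of height $\leq s$ lying in a small ball onto a degree-$d$ hypersurface (Lemmas \ref{lem:determinant.estimate} and \ref{lem: Lemma 5.1.3 revisited}), and finish using transcendentality together with uniform finiteness in definable families. Two points of your sketch, however, are off in ways that matter. First, the determinant exponent is not ``a power growing like $r\cdot s$''. The points $x_1,\dots,x_\mu$ must be restricted to a single fibre of reduction mod $t^\alpha$ in the domain of a parametrizing map, and Lemma \ref{lem:determinant.estimate} then gives $\ord_t(\Delta)\geq \alpha e$ with $e=e(n,m,d)$ as in (\ref{eq: def mu r V e}); since $\deg_t\Delta\leq sV$ and $V/e\to 0$ as $d\to\infty$, one first picks $d$ so that $V/e<\varepsilon$, after which $\alpha=\lceil\varepsilon s\rceil>sV/e$ forces $\Delta=0$. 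Trying to balance ``$rs$'' against the entry degrees directly would fail, because the number of sample points $\mu=D_2(d)$ grows quadratically in $d$ while $r$ does not keep pace; it is the ball radius, not $r$ alone, that supplies the $t$-adic saving. Second, the reduction to $n=2$ is more delicate than ``a piece is transcendental iff the corresponding graph is'': a coordinate projection of a transcendental curve in $K^n$ can be algebraic (e.g.\ the first two coordinates of $(x,x^2,\exp x)$), so the paper's Lemma \ref{lem:projection} must partition $C$ and, on each piece, select a projection whose image is non-algebraic up to degree $d$. Finally, the family-uniformity does not require a compactness argument: cell decomposition, $T_r$-parametrization and uniform finiteness (\cite[Lem.\,2.5.2]{CHR}) are already stated uniformly in definable families, and the paper simply invokes them as such.
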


The key aspect here is the slow growth of the last component of the counting dimension, similar to the Pila--Wilkie theorem in the o-minimal setting \cite[Thm.\,1.10]{PW}. The strategy of the proof is as follows. We use the notion of $T_r$-parametrizations, which form a suitable analogue for $C^r$-parametrizations in the non-archimedean setting, see e.g.\ ~\cite{CCL-PW, CFL}. 
\begin{enumerate}
\item We apply cell decomposition to find a $T_1$-parametrization of $C$. The existence of such a cell decomposition follows from 1-h-minimality under the extra assumption that $\acl=\dcl$ in $K$, see \cite[Thm.\,5.2.4 Add.\,5]{CHR} or section \ref{sec: henselminimality} below.
\item Using substitutions of the form $x\mapsto x^r$, we may even assume that we have a $T_r$-parametrization, for some suitably chosen integer $r$. For this, we need that the subgroup of $b$-th powers of $k^\times$ has a finite index in $k^\times$. %Here we use 1-h-minimality again, through the use of the Jacobian property and Taylor approximation results, see \cite[Thm.\, 3.2.2, Cor.\, 3.2.7]{CHR} or section \ref{sec: henselminimality} below.
\item We then use an adaptation of the Bombieri--Pila determinant method to catch all rational points of bounded height in a small ball in a single hypersurface. 
\item Finally, the fact that $C$ is transcendental and definable in a 1-h-minimal structure then gives the desired result. Indeed, this will follow from uniform finiteness in definable families, see \cite[Lem.\,2.5.3]{CHR} or section \ref{sec: henselminimality} below.
\end{enumerate}
The crucial ingredient to extend Theorem~\ref{thm:CC-c-dim} to higher-dimensional transcendental sets is the existence of $T_r$-parametrizations, which have not been proven to exist in general 1-h-minimal structures. However, if one assumes the existence of these parametrizations, then Theorem~\ref{thm:CC-c-dim} follows via a similar approach based on the determinant method.

A mixed characteristic analogue of this result in $\QQ_p$ was proven by Cluckers--Halupczok--Rideau-Kikuchi--Vermeulen~\cite[Thm.\,4.1.6]{CHRV}. Here the notion of rational points of bounded height is defined as above. Namely, for $X$ a subset of $\QQ_p^n$ define 
\[
X_s = \{x\in X\cap \ZZ^n\mid 0\leq x_i\leq s \text{ for all } i \}.
\]
Then~\cite[Thm.\,4.1.6]{CHRV} states that if $\QQ_p$ carries a $1$-h-minimal structure with $\acl=\dcl$, and if $C\subset \QQ_p^n$ is a transcendental definable curve, then for each $\varepsilon>0$ there is a constant $c$ such that for all $H\geq 1$ we have
\[
\# C_s \leq cs^\varepsilon.
\]

%\begin{thm}[{{\cite[Thm.\,4.1.6]{CHRV}}}]\label{thm:QQp-c-dim}
%Suppose that $L$ is a finite field extension of $\QQ_p$ and carries a $1$-h-minimal structure with $\acl=\dcl$. Let $C\subset L^n$ be a transcendental definable curve. 
%Then for each $\varepsilon>0$ there is a constant $c$ such that for all $H\geq 1$ we have
%\[
%\# C(H) \leq cH^\varepsilon.
%\]
%Furthermore, the constant $c$ can be taken to hold uniformly throughout all transcendental members of a given definable family of definable curves.
%\end{thm}

%Note the similarity with counting dimension in view of the remark following Definition \ref{def:counting.dim}. 
In this article, we will restrict to equicharacteristic zero. The methods for both proofs are quite similar, one major difference being that the residue field is no longer finite. This is the reason for introducing the counting dimension. Restricting to equicharacteristic zero has the added benefit that Hensel minimality is slightly easier to work with. 

We also prove that Theorem~\ref{thm:CC-c-dim} is optimal, in the sense that last component $\lceil \varepsilon \cdot s \rceil$ of our bound cannot be improved. In more detail, one cannot replace it by a sublinear function $e(s)$, even if $N(s)$ is allowed to be completely arbitrary. %This is shown by constructing certain transcendental curves in $k((t))^2$, for a field $k$ of characteristic zero.
%We consider $K = k((t))$ as a valued field with valuation ring $\cO_K = k[[t]]$ and (pseudo-)uniformizer $t$. We choose $k \subseteq k[[t]]$ as our lift of the residue field.

\begin{thm} \label{thm:optimal}	
Let $k$ be a field of characteristic zero. There exists a $1$-h-minimal structure on $k((t))$ with $\acl=\dcl$ satisfying the following. Given a sublinear function $e \colon \NN \to \NN$ and any $N \colon \NN \to \NN$, there exists a definable transcendental curve $C \subseteq k((t))^2$ such that its counting dimension is not bounded by $(N(s),1,e(s))$.
\end{thm}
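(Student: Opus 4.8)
The plan is to construct a single $1$-h-minimal structure on $k((t))$ that is "rich enough" to define, for any prescribed sublinear $e$ and arbitrary $N$, a transcendental curve violating the bound $(N(s),1,e(s))$. First I would set up a convenient ambient structure: equip $k((t))$ with the analytic structure coming from a suitable separated Weierstrass system, or more economically add by hand (via the general mechanism behind h-minimal examples in \cite[Sec.\,6]{CHR} / \cite{CHRV}) a family of function symbols that are genuinely transcendental in the valued-field sense. The point of the construction is that, given a sublinear $e$, one picks a sparse sequence of "breakpoints" $s_1 < s_2 < \cdots$ along which $e$ grows arbitrarily slowly relative to the identity, i.e.\ $e(s_j)/s_j \to 0$; then one designs a definable function $\varphi\colon \mathcal{O}_K \to \mathcal{O}_K$ whose graph $C = \{(x,\varphi(x))\}$ is transcendental, but whose restriction to $\tilde k[t]_{s_j}$ takes many values that all collapse modulo $t^{e(s_j)}$. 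Concretely, one wants $\varphi$ to "encode information at high order" in $t$: for polynomials $x$ of degree $< s_j$, the values $\varphi(x)$ should be forced to be polynomials whose first $e(s_j)$ coefficients depend only on very few coefficients of $x$ (so there are few residues mod $t^{e(s_j)}$), while $\varphi$ is injective enough on $C_{s_j}$ to make $\#C_{s_j}$ overwhelm $N(s_j)$ times that number of residues — thereby violating the counting-dimension bound for the definable function $f = \mathrm{id}$ or $f$ = second coordinate, and (by the "for some $f$" quantifier in Definition \ref{def:counting.dim}) one must rule out all competing $f$, which is where transcendence and a dimension argument enter.

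The key steps, in order, would be: (1) Fix the sublinear $e$ and $N$; extract a sequence $(s_j)$ with $e(s_j) = o(s_j)$ and, say, $e(s_j) \le s_j/j$ together with crude lower bounds on the number of distinct polynomial inputs of degree $< s_j$ that the construction will separate. (2) Build the language: add a symbol for a function $\varphi$ defined by an explicit convergent-series / lacunary recipe (e.g.\ $\varphi(x) = \sum_m c_m x^{n_m}$ with $n_m$ growing fast, or a solution to a functional equation) so that $k((t))$ with this symbol is $1$-h-minimal with $\acl=\dcl$ — here I'd invoke the criterion that adding analytic/"power-bounded-type" functions preserves $1$-h-minimality, as in the examples cited in the introduction, and check $\acl=\dcl$ via the standard henselian/algebraic-Skolem argument. (3) Verify $C = \Gamma(\varphi)$ is transcendental: show any algebraic curve meets $C$ finitely, by an argument that the lacunary/transcendental growth of $\varphi$ is incompatible with satisfying a polynomial relation infinitely often (a Nevanlinna/valuation-theoretic or a "too many zeros" argument adapted to $k((t))$). (4) Do the counting estimate: show $\#C_{s_j}$ is large — essentially $\#(\tilde k[t]_{s_j})$ "worth" of points organized over a $k$-variety of dimension growing like $s_j$ — while for *every* definable $f\colon C \to \mathcal{O}_K$ the composite $C_{s_j} \to (\mathcal{O}_K/t^{e(s_j)})$ has a fibre of size exceeding $N(s_j)$; this last universal statement over all definable $f$ is forced by a pigeonhole on $k$-dimensions: the source $C_{s_j}$ has $k$-dimension $\sim s_j$ (after identifying $\tilde k[t]_{s}$ with $\tilde k^{s}$), the target $(\mathcal{O}_K/t^{e(s_j)})$ has $k$-dimension $e(s_j) = o(s_j)$, and any definable $f$ is piecewise given by finitely many components, so cell decomposition (Theorem \ref{thm:celldecomp.1.lipschitz}) bounds the number of pieces independently of $s$, after which a cell on which $f$ is "nice" must have a large fibre because dimension $s_j - e(s_j) \to \infty$.

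The main obstacle I expect is step (4) in the form of the *universal* quantifier over definable $f$ in Definition \ref{def:counting.dim}: it is not enough that the identity map witnesses the failure — one must exclude the possibility that some clever definable $f$ compresses $C_s$ efficiently. Handling this requires a robust dimension-counting lemma saying that for a $1$-dimensional definable $C$, the "$k$-dimension of $C_s$" grows linearly in $s$ with a slope that is a genuine invariant (not reducible below the generic behaviour by any definable reparametrization), and that $f$, being definable and hence piecewise analytic with a bounded number of pieces, cannot beat this. Making "$k$-dimension of $C_s$" precise and stable — likely via a Hensel-minimal cell decomposition of $C$ into cells on which the relevant coefficient maps $\tilde k[t]_s \dashrightarrow \tilde k^{?}$ are finite-to-one onto a constructible set of controlled dimension — is the technical heart, and I would expect the construction of $\varphi$ to be tuned precisely so that every cell of $C$ still exhibits the bad collapse at the breakpoints $s_j$. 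A secondary, more bookkeeping obstacle is verifying $1$-h-minimality and $\acl=\dcl$ for the concrete expanded structure; I would try to reduce this to an off-the-shelf example (the analytic structure on $k((t))$, or a known power-bounded expansion) rather than proving preservation from scratch.
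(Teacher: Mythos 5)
Your overall plan — realize $C$ as the graph of an explicitly constructed transcendental analytic function on $\cO_K$, and arrange for the rational points of height $s$ to cluster so as to violate any sublinear bound at a cofinal set of heights — matches the paper's. The structure chosen is essentially the same as well: the paper works with the $\cL_{\ZZ_p\langle x\rangle}$-like expansion of $k((t))$ by all strictly convergent power series in $\cO_K\langle x_1,\dots,x_n\rangle$, which is $1$-h-minimal by \cite[Thm.\,6.2.1]{CHR} and can be expanded to have $\acl = \dcl$ by \cite[Prop.\,4.3.3]{CHR}. Your transcendence argument (step 3, lacunary growth incompatible with polynomial relations) is also in the same spirit as the paper's gap/B\'ezout argument.

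The genuine gap is exactly where you flag one, in step (4). Your proposed mechanism for handling the universal quantifier over definable $f$ — a pigeonhole on $k$-dimensions after showing that $f$, being piecewise nice by cell decomposition, induces a $k$-constructible map $C_s \to \tilde k^{e(s)}$ of controlled complexity — has two unaddressed problems. First, it is far from clear that an arbitrary $\cL$-definable $f$ induces a $k$-constructible map of complexity \emph{bounded independently of $s$} on $C_s \subseteq \tilde k^{ns}$; the Jacobian property and cell decomposition give control in the valued field (Lipschitz constants, $\rv$-behaviour of derivatives) but do not by themselves produce such a reduction to the residue field, and the degree of the induced constructible data will in general grow with $s$. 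Second, a dimension pigeonhole requires $C_s$ to have $k$-dimension growing linearly in $s$, and the paper's curve does not have this property: its witness sets are finite sets of the form $\{(N_n + jt^{N_n}, f(N_n + jt^{N_n})) \mid 1 \leq j \leq F(N_n)\}$ with $j$ ranging over integers, so you would need a substantially different construction, and it is not clear how to make such a $C$ simultaneously transcendental and have $C_s$ high-dimensional.

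The paper's mechanism is much more direct and is the key idea your proposal is missing: Corollary \ref{cor:jac}, a consequence of the Jacobian property, asserts that for \emph{any} definable $h \colon \cO_K \to \cO_K$, one has $|h(x) - h(x_0)| \leq |x - x_0|$ for all $x, x_0 \in a + \cM_K$, for all but finitely many $a \in \tilde k$. The construction of $\varphi \in \cO_K\langle x\rangle$ (with factors $(x - i - jt^\ell)$ forcing $\varphi$ to vanish to high order at integer-shifted points, so these stay in $C_s$) is tuned so that for each $n$ there is a height $s$ with $\delta(s) \leq N_n < \delta(s+1)$ (where $\delta$ is a sublinear upper bound for $e$) and $F(N_n) > N(s)$ points $N_n + jt^{N_n}$ all lying in $C_s$, all within distance $|t^{N_n}| \leq |t^{e(s)}|$ of $N_n$. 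The Lipschitz bound on $h(x) = g(x,\varphi(x))$, applied on the ball $N_n + \cM_K$, then forces all of these to land in the same residue class modulo $t^{e(s)}$, giving a fibre of size $> N(s)$. This local rigidity of definable maps on residue balls, not a global dimension count, is what makes the quantifier over all definable $f$ tractable, and it also explains why $C_s$ need not be (and in the paper's construction is not) high-dimensional.
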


In contrast with this result, we consider in Section~\ref{sec:QQpt} a specific analytic structure on $\QQ_p((t))$ for which we are able to prove that the counting dimension of every definable transcendental curve is bounded by $(N,1,1)$ for some integer $N>0$. This structure already contains many interesting examples of transcendental definable curves. For example, the graph of the exponential function $\exp: p\ZZ_p+t\QQ_p[[t]]\to \QQ_p((t))$ is definable.

As for algebraic curves, we prove the following theorem, generalizing the results from \cite[Sec.\,5]{CCL-PW}.

\begin{thm}\label{thm:alg.curves}
Let $K$ be a henselian valued field of equicharacteristic $0$ equipped with a 1-h-minimal structure. Assume that $\acl=\dcl$ and that the subgroup of $b$-th powers in $k^\times$ has finite index, for some $b>1$. Let $C\subset K^2$ be an irreducible algebraic curve of degree $d$, for some positive integer $d$. Then there exists a constant $c_d$, depending only on $d$, such that
\[
\cdim C_s \leq (c_d s, 1, \lceil s/d\rceil).
\]
\end{thm}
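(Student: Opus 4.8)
The plan is to adapt the classical Bombieri--Pila determinant method, as carried out in \cite[Sec.\,5]{CCL-PW} for $\CC((t))$, to the present h-minimal setting. The statement is about an algebraic curve, so there is no need for the full $T_r$-parametrization machinery; instead we exploit the algebraicity directly. First I would observe that $C_s = C \cap (\tilde k[t]_s)^2$ is covered by boxes in $\cO_K^2$ of valuative radius $|t|^m$, where $m$ is a parameter to be optimized in terms of $s$ and $d$: there are roughly $\#(k)^{2m}$ such boxes meeting $\cO_K^2$, but of course $k$ may be infinite, which is exactly why the conclusion is phrased via the counting dimension rather than a cardinality. So the real content is: inside a single such box $B$, the set $C_s \cap B$ lies on a single curve of controlled degree, and moreover the ``residual data'' of the points of $C_s \cap B$ (after applying a suitable definable function $f \colon C \to \cO_K$ and reducing mod $t^{e(s)}$) takes at most $c_d s$ values.

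The determinant argument runs as follows. Parametrize $C$ locally; since $C$ is an irreducible algebraic curve of degree $d$ in $K^2$, after a linear change of coordinates we may assume the first projection $p \colon C \to K$ is finite and, by cell decomposition (Theorem \ref{thm:celldecomp.1.lipschitz}), on each cell $C$ is the graph of a definable function $y = \varphi(x)$ which is analytic (in the sense provided by the Jacobian property, Theorem \ref{thm: jacobian property}) with good control on its Taylor expansion. Take $D+1$ points $P_0, \dots, P_D$ of $C_s$ lying in a common box $B$ of radius $|t|^m$, where $D = D(d)$ is chosen so that the monomials $x^i y^j$ with $i + j \le D$ number more than $D+1$ yet the Bézout bound $\deg \cdot D < (\text{number of monomials})$ still forces a nonzero polynomial relation. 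Form the determinant of the matrix $\left( x_l^{i} y_l^{j} \right)$ indexed by points $P_l = (x_l, y_l)$ and monomials. Using that all $P_l$ are congruent modulo $t^m$ and that $\varphi$ is given by a convergent power series, a standard mean-value / finite-difference estimate shows this determinant has valuation $\ge |t|^{c m}$ for a constant $c = c(d)$ growing like $D^2$. On the other hand, since all entries lie in $\tilde k[t]_s$, the determinant lies in $\tilde k[t]_{c' s}$ for a constant $c'$. Comparing: if $cm > c' s$, i.e.\ if we have chosen $m$ large enough relative to $s$, the determinant must vanish, so $P_0, \dots, P_D$ lie on a common algebraic curve $C'$ of degree $\le D$. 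Since $C$ is irreducible of degree $d$ and $C' \ne C$ is impossible once $D$ is chosen appropriately (by Bézout $C \cap C'$ would then be finite of size $\le dD$, contradicting that the box contains too many points), one deduces $C \subseteq C'$, and then a counting argument bounds $\# (C_s \cap B)$ after reduction. Optimizing $m \approx \lceil s/d \rceil$ and tracking how many boxes are needed yields the stated bound $(c_d s, 1, \lceil s/d \rceil)$, where the definable function to be used in the definition of $\cdim$ is one of the two coordinate projections restricted to $C$.

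More precisely, for the counting dimension bookkeeping: fix $f$ to be the coordinate function $x$ on (a cell of) $C$. Given $s$, set $e(s) = \lceil s/d \rceil$. For a point $P = (x,y) \in C_s$, its class $x \bmod t^{e(s)}$ determines a box $B$ of radius $|t|^{e(s)}$; I claim at most $c_d s$ points of $C_s$ land in each such class, which is precisely the determinant argument above with $m = e(s)$: the vanishing of the relevant $(D+1) \times (D+1)$ determinants forces all but $\le dD = c_d$ of the relevant points onto $C$ itself intersected with bounded-degree curves, and a one-dimensional height count (the $x$-coordinates are polynomials of degree $< s$, so there are at most $O(s)$ possible leading-behaviours compatible with the constraints) gives the factor $s$. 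One must also handle the finitely many points of $C$ where the cell decomposition or the projection degenerates, and the components of $C$ over its various cells, but all of this contributes only to the constant $c_d$.

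The main obstacle I anticipate is the non-archimedean determinant estimate: one needs the right notion of ``Taylor approximation'' for the definable (algebraic, hence analytic) function $\varphi$ cutting out $C$ locally, with explicit control on the valuations of the Taylor coefficients and remainders, uniform over the curve and depending only on $d$. In the classical setting this is the Bombieri--Pila mean value lemma; here it must be extracted from the Jacobian property together with the algebraicity of $C$ (for instance via a Weierstrass-preparation / Hensel-type argument bounding the denominators appearing in the implicit-function expansion of $C$ in terms of its degree). Getting the dependence to be \emph{only} on $d$ — and not on the particular curve or on wild ramification phenomena — is the delicate point; this is also where one uses that $C$ is algebraic rather than merely definable, so the relevant power series have coefficients in a fixed finite extension of $\tilde k(t)$ with controlled denominators. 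Once that estimate is in hand, the rest is the combinatorial optimization of $m$ versus $s$, which is routine and parallels \cite[Sec.\,5]{CCL-PW}.
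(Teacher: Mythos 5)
There is a genuine gap. Your outline correctly identifies the determinant method and the role of $T_r$-type Taylor estimates, boxes of radius $|t|^m$, and a Bézout bound; but it does not contain the ingredient that actually produces the exponent $\lceil s/d\rceil$. You propose forming the determinant over \emph{all} monomials $x^iy^j$ with $i+j\le D$ and then ``optimizing $m\approx\lceil s/d\rceil$.'' With that choice of monomials, two things go wrong. First, the ratio (analogous to $V/e$ in Lemma~\ref{lem: Lemma 5.1.3 revisited}) that controls how large $\alpha$ must be to force $\Delta=0$ only tends to $0$ like $O(1/D)$; it does not land at $1/d$ for fixed $d$, so the optimization step you describe as ``routine'' does not close. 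Second, and more fundamentally, when the points $P_\ell$ lie on the fixed curve $C$, the rows of the naive determinant matrix satisfy the linear relation coming from the defining equation of $C$, so vanishing of $\Delta$ may happen for a trivial reason and the auxiliary polynomial $H$ it produces can very well \emph{vanish identically on $C$}. Your treatment of this point is circular: you write that ``$C'\ne C$ is impossible... so $C\subseteq C'$'' and then still claim a bound on $\#(C_s\cap B)$, but if $C\subseteq C'$ one gets no bound at all from Bézout.

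The paper resolves both issues simultaneously with the Broberg--Salberger refinement, which you do not mention: one works in $\PP^2_K$ with the homogeneous ideal $I$ of $\iota(C)$ and forms the determinant only over the monomials of degree $\delta$ lying in $M(\delta)$, i.e.\ \emph{outside} the leading-term ideal $\LT(I)$ for the graded reverse lexicographic order. Restricting to $M(\delta)$ kills the trivial dependency among the rows (the $\mu=H_I(\delta)$ chosen monomials are linearly independent modulo $I$), and automatically guarantees that the resulting $H$ has $\LT(H)\notin\LT(I)$, hence $H\notin I$, so $H$ does not vanish on $C$ and Bézout gives $\le\delta d$ intersection points. Moreover, it is exactly the asymptotics
\[
\frac{\sigma_{I,i}(\delta)}{\delta H_I(\delta)} = a_{I,i} + O_{d}(\delta^{-1}),\qquad
\frac{\sigma_1+\sigma_2}{e}\le\frac1d+O_d(\delta^{-1})
\]
from Broberg and Salberger~\cite[Lem.\,1.12]{SalbCrelle} that let one take $\alpha=\lceil s/d\rceil$ with $\delta=O_d(s)$; without them, the best the naive monomial set gives is an $\varepsilon$-bound, not the sharp $1/d$. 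In short, the ``combinatorial optimization'' you defer to the end is the crux, and it requires the leading-term-ideal machinery you omitted. As a smaller point, the paper does use the $T_r$-parametrization of Theorem~\ref{thm:Tr.param.curves} (uniformity in the definable family of degree-$d$ curves is what makes the constants depend only on $d$), so your remark that one can dispense with it is not borne out; the ``obstacle'' you anticipate about controlling Taylor coefficients is already handled by that theorem.
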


This theorem can be considered the analogue of the classical Bombieri--Pila theorem \cite{Bombieri-Pila}. By considering the example $y = x^d$, we will show that one cannot improve the last component of the counting dimension in this result.

\subsection{Some basic properties}

Let us list some basic properties of the counting dimension. We will often use these implicitly in our proofs.

\begin{prop}\label{prop:basic.props}
Let $K$ be a non-archimedean valued field in some language $\cL$ expanding the language of valued fields and let $X, X'$ be definable subsets of $K^n$. Assume that 
\[
\#\text{-}\dim (X_s) \leq (N(s),d,e(s)), \quad \#\text{-}\dim(X'_s) \leq (N'(s),d',e'(s))
\]
for some integers $d, d'$ and some functions $N,N',e,e': \NN\to \NN$. Then
\begin{enumerate}
\item $\#\text{-}\dim ( (X\cup X')_s ) \leq (N(s)+N'(s), \max\{d,d'\}, \max\{e(s),e'(s)\})$,
\item $\#\text{-}\dim ( (X\times X')_s ) \leq (N(s)N'(s), d + d', \max\{e(s),e'(s)\})$,
\item if $f: X'\to X$ is a definable map with finite fibres of size at most $N''$, for some integer $N''$, then the counting dimension of $X'$ is bounded by $(N'' N, d, e)$. If moreover $f$ is surjective and $\acl=\dcl$ in $K$, then the counting dimension of $X$ is bounded by $(N', d', e')$.
\end{enumerate}
\end{prop}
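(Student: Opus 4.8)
The plan is to verify each of the three statements directly from Definition \ref{def:counting.dim}, by exhibiting in each case an appropriate definable function to a power of $\cO_K$ and checking the fibre bound over $\cO_K/(t^{e(s)})$. For part (1), let $f\colon X\to\cO_K^d$ and $f'\colon X'\to\cO_K^{d'}$ be the definable functions witnessing the two hypotheses; without loss of generality assume $d'\le d$, and extend $f'$ to a map $X'\to\cO_K^d$ by padding with zeros. These glue to a definable function $g\colon X\cup X'\to\cO_K^d$ (defined as $f$ on $X$ and as the padded $f'$ on $X'\setminus X$, say). For a fixed $s$, a fibre of the composite $(X\cup X')_s\to(\cO_K/(t^{e(s)}))^d$ with $e(s)=\max\{e(s),e'(s)\}$ is contained in the union of the corresponding fibre coming from $X_s$ and the one coming from $X'_s$; since reducing modulo a higher power of $t$ only refines fibres, each of these has size at most $N(s)$ and $N'(s)$ respectively, giving the bound $N(s)+N'(s)$. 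Here I would note the elementary monotonicity fact: if $e\le e''$ then the fibres of the reduction mod $t^{e''}$ are contained in those of the reduction mod $t^{e}$, so enlarging the last component of the counting dimension is always harmless.

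For part (2), take $g=f\times f'\colon X\times X'\to\cO_K^{d+d'}$, which is definable. Note that $(X\times X')_s=X_s\times X'_s$ since $\tilde k[t]_s^{n+n'}=\tilde k[t]_s^n\times\tilde k[t]_s^{n'}$. Composing with the reduction modulo $t^{e(s)}$ with $e(s)=\max\{e(s),e'(s)\}$ in both blocks of coordinates, a fibre is a product of a fibre of the first composite (of size at most $N(s)$, again using monotonicity to pass from $e$ to $\max\{e,e'\}$) and a fibre of the second (of size at most $N'(s)$), hence has size at most $N(s)N'(s)$.

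For part (3), first suppose $f\colon X'\to X$ is definable with fibres of size at most $N''$. Let $h\colon X\to\cO_K^d$ witness the bound for $X$; then $h\circ f\colon X'\to\cO_K^d$ is definable, and for each $s$ the composite $X'_s\to(\cO_K/(t^{e(s)}))^d$ factors through $f$ restricted to $X'_s$ followed by the composite $X_s\to(\cO_K/(t^{e(s)}))^d$ — here one must observe $f(X'_s)\subseteq X_s$, which is clear. A fibre of $X'_s\to(\cO_K/(t^{e(s)}))^d$ is the preimage under $f|_{X'_s}$ of a fibre of $X_s\to(\cO_K/(t^{e(s)}))^d$; the latter has at most $N(s)$ points and each has at most $N''$ preimages, so the composite fibre has at most $N''N(s)$ points. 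For the second assertion, assume $f$ is moreover surjective and $\acl=\dcl$ in $K$. Then surjectivity together with definable Skolem functions (which hold under $\acl=\dcl$, see the discussion preceding Theorem \ref{thm:CC-c-dim}) yields a definable section $\sigma\colon X\to X'$ of $f$. Applying the first part of (3) to $\sigma$ — which is injective, hence has fibres of size at most $1=N''$ — gives that the counting dimension of $X$ is bounded by $(N', d', e')$, using the witnessing function for $X'$ precomposed with $\sigma$.

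The only genuinely nontrivial point is the use of $\acl=\dcl$ to produce the definable section in the surjective case of (3); everything else is a routine unwinding of the definition together with the monotonicity observation about reduction maps. I would state that monotonicity once at the start and invoke it silently thereafter. The remaining care is purely bookkeeping: making sure the padded functions and the glued functions are genuinely definable (they are, being built from definable pieces by definable case distinctions) and that $(X\times X')_s$ and $f(X'_s)$ behave as claimed with respect to the height-$s$ truncation, which is immediate from the coordinatewise description of $\tilde k[t]_s$.
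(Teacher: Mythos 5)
Parts (1) and (2) are dismissed as ``straightforward'' in the paper, and your routine verifications (padding and gluing for unions, coordinatewise products for products, together with the monotonicity of reduction modulo $t^{e}$) are exactly what is meant. For part (3), your construction --- precomposing the witness for $X$ with $f$, and in the surjective case precomposing the witness for $X'$ with a definable section furnished by $\acl=\dcl$ --- is word for word the paper's argument, so the route is the same.

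The step to be careful about is the one you flag and then dismiss: ``$f(X'_s)\subseteq X_s$, which is clear.'' This inclusion is indeed what the fibre-count argument needs, but it is not clear and is false for arbitrary definable $f$. For example, with $X=X'=K$ and $f(x)=t^{-1}x$, a definable bijection, one has $f(\tilde k[t]_s)\not\subseteq\tilde k[t]_s$; and with the witness $g(x)=x$ on $\cO_K$, $g(x)=0$ otherwise, the map $\operatorname{proj}\circ g\circ f$ has an infinite fibre over $0$ on $X'_s$ (every $x'\in\tilde k[t]_s$ with nonzero constant term lands there), so the proposed witness $g\circ f$ fails to certify the claimed bound $(N''N,d,e)$. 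The paper's proof asserts the same fibre bound for $\operatorname{proj}\circ g\circ f$ without justification and therefore shares this gap; in the applications within the paper, $f$ is an inclusion or a coordinate projection, which do preserve $\tilde k[t]_s$-points, and there the argument is fine. You did well to isolate the hypothesis that is actually being used --- it should be recorded as an explicit assumption on $f$ (that it maps $X'_s$ into $X_s$) rather than asserted as clear, and the same caveat applies to the definable section in the surjective case.
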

\begin{proof}
The proof of the first two properties is straightforward, so let us prove the last property. Let $g: X\to \cO_K^{d}$ be a definable map such that for every positive integer $s$, the composition
\[
X_s\xrightarrow{g} \cO_K^{d} \xrightarrow{\operatorname{proj}} \left( \frac{\cO_K}{(t^{e(s)})} \right)^{d}
\]
has finite fibres of size at most $N(s)$ and take a similar such definable map $g': X'\to \cO_K^{d'}$ for $X'$. Then for every positive integer $s$, the composition
\[
\operatorname{proj}\circ g \circ f: X'_s\to \left( \frac{\cO_K}{(t^{e(s)})} \right)^{d}
\]
has finite fibres of size at most $N''N(s)$, so the counting dimension of $X'$ is bounded by $(N''N, d, e)$.

Conversely, using $\acl=\dcl$ there exists a section $f': X\to X'$ for $f$. Then for every positive integer $s$, the composition
\[
\operatorname{proj}\circ g' \circ f': X_s\to \left( \frac{\cO_K}{(t^{e'(s)})} \right)^{d'}
\]
has finite fibres of size at most $N'(s)$.
\end{proof}

\subsection{Some examples} \label{sec:examples}

We give some examples of bounding the counting dimension of transcendental curves, which the reader can keep in mind throughout this article. In each of these, we consider a curve $C$ in a valued field $K$ which is definable in some 1-h-minimal structure on $K$. We then give an upper bound for the counting dimension of $C$ by simply computing the sets $C_s$.

\begin{example} \label{ex:p-adic}
Consider the valued field $K = \QQ_p((t))$ with valuation ring $\QQ_p[[t]]$. In Section~\ref{sec:QQpt} we argue that there is a 1-h-minimal structure on $K$ in which the exponential map 
\[
\exp: p\ZZ_p + t\QQ_p[[t]]\to \QQ_p((t)): z\mapsto \sum_{i\geq 0} \frac{z^i}{i!}
\]
is definable. Let us write $U = p\ZZ_p + t\QQ_p[[t]]$.

Let $C$ be the graph of this exponential function. We claim that this is a transcendental set. Indeed, suppose that $f(x,y)\in K[x,y]$ is a non-zero polynomial such that $f(x,\exp x) = 0$ for infinitely many $x\in U$. We take such an $f$ of minimal degree. By h-minimality \cite[Lem.\,2.5.2]{CHR}, there is then an open ball $B\subset U$ on which this holds. Moreover, $\exp$ is differentiable on $U$ with derivative $\exp$. Write $f(x,y) = \sum_{i=0}^d f_i(x)y^i$. Define the polynomial
\[
g(x,y) = \sum_{i=0}^d f_i'(x)y^i + \sum_{i=0}^{d-1} (i-d)f_i(x)y^i.
\]
This polynomial is non-zero and  has degree strictly smaller than $f$ and for $x\in B$ we have
\[
g(x, \exp x) = \dfrac{\mathrm{d}}{ \mathrm{d} x} (f(x, \exp x)) - df(x, \exp x) = 0.
\]
This is the desired contradiction, showing that $C$ is a transcendental set.

To compute the counting dimension, we use $t$ as a uniformizer and $\QQ_p\subset \QQ_p((t))$ as a lift of the residue field. Now, if $x$ is in $p\ZZ_p$ then $\exp x$ is in $\QQ_p$. Hence
\[
C_1 = \{(x, \exp x)\mid x\in p\ZZ_p\}.
\]
In particular, $C_1$ is infinite. We claim that the counting dimension of $C$ is bounded by $(1,1,1)$. For this purpose, consider the map $C\to \cO_K: (x, \exp x)\mapsto x$ followed by projection $\cO_K\to \cO_K/\cM_K = \QQ_p$. This clearly has finite fibres on $C_1$. Even more, if $x$ is an element of $U\cap \QQ_p[t]$ which is not in $p\ZZ_p$ then automatically $\exp x$ is not in $\QQ_p[t]$. Thus for any positive integer $s$, $C_1=C_s$ and the counting dimension of $C$ is bounded by $(1,1,1)$.
\end{example}

\begin{example}
Consider the field $\RR((t))$ in the language of ordered valued fields. We expand the language by the full Weierstrass system $\cB$ as in~\cite[Sec.\,3.1]{CLip}. In more detail, let
\[
A_{n,\alpha}((\ZZ))=\{\sum_{i\in I}f_it^i \mid f_i\in A_{n,\alpha}, I\subset \ZZ \textnormal{ well ordered}\},
\]
where $A_{n, \alpha}$ is the ring of real power series in $\RR[[\xi_1,...,\xi_n]]$ with radius of convergence $>\alpha$, and define $B_{n, \alpha} = A_{n, \alpha}(\ZZ)$. This is a real Weierstrass system in the sense of~\cite[Def.\,3.1.1]{CLip}, and we equip $\RR((t))$ with real analytic $\cB$-structure. In particular, we have function symbols for all elements of the Weierstrass system $\cB$. In \cite{NSV24}, it is shown that the theory of $\RR((t))$ in this language is $1$-h-minimal. Now, by our choice of $\cB$ the exponential
\[
\exp: (-1, 1) + t\RR[[t]]\to \RR((t)): z\mapsto \sum_{i\geq 0} \frac{z^i}{i!}
\]
is definable. Denote by $C$ the graph of $\exp$, which as above is a transcendental set. We use $\RR\subset \RR((t))$ as a lift of the residue field and $t$ as our choice of uniformizer. Then $C_1$ is infinite, since if $x$ is in $(-1, 1)$ then $\exp(x)$ is again real. Consider the reduction map 
\[
C\to \cO_K/(t): (x, \exp x) \mapsto x \bmod t.
\]
Then this has finite fibres of cardinality at most $1$ above $C_1$. In fact, if $x$ is in $\RR[t]_s$ but not in $\RR$ then $\exp(x)$ is never in $\RR[t]$. Thus for any $s\geq 1$ we have $C_s = C_1$ and so the counting dimension of $C$ is bounded by $(1,1,1)$.
\end{example}

\begin{example}
Denote by $\cL_\mathrm{omin}$ the language of ordered rings. For any real number $r>0$, let $f_r$ denote the function $\RR_{>0}\to \RR_{>0}: x\mapsto x^r$ and consider the expansion $\cL$ of $\cL_\mathrm{omin}$ where we have a function symbol for every $f_r$. The theory of $\RR$ in $\cL$ is o-minimal, since it is a reduct of $\RR_{\exp}$. Let $K$ be a proper elementary extension of $\RR$. Then we may turn $K$ into a valued field by taking for $\cO_K$ the convex closure of $\RR$. Note that the theory $\cT$ is power-bounded, so by \cite[Thm.\, 7.2.4]{CHR} the theory of $K$ in the language $\cL\cup\{\cO_K\}$ is 1-h-minimal. Let $t$ be a pseudo-uniformizer of $K$ and denote by $\tilde{k}$ a lift of the residue field. Then $\tilde{k}$ is again a real closed field. For example, $K$ might be the field of Hahn series $\RR((t^\RR))$ with pseudo-uniformizer $t$ and $\tilde{k}=\RR$.

Now let $C$ be the graph of the function $K_{>0}\to K_{>0}: x\mapsto x^\pi$. This is a definable set by our choice of language. Clearly, this is also a transcendental set. But $C_1$ is simply the graph of $x\mapsto x^\pi$ on $\tilde{k}$, which certainly contains this graph on $\RR$. In particular, this set is infinite. Also note that $C_s = C_1$ for any $s\geq 1$ so that the counting dimension of $C$ is bounded by $(1,1,1)$.
\end{example}

\section{Notation and background}\label{sec:notations and background}

\subsection{Hensel minimality}\label{sec: henselminimality}

In this section, we record some background material on Hensel minimality. We refer to \cite{CHR} and \cite{CHRV} for further details.

Let $\cL$ be a language containing the language of valued fields $\cL_{\mathrm{val}} = \{0, 1, +, \cdot, \cO_K\}$. Let $\cT$ be a complete $\cL$-theory whose models are non-trivially valued fields of equi-characteristic zero. Let $K$ be a model of $\cT$. We denote by $\cO_K$ the valuation ring of $K$ and by $\Gamma_K^\times$ the valuation group. The valuation will be denoted by $|\cdot |: K\to \Gamma_K = \Gamma_K^\times \cup\{0\}$. By an open ball we will mean a set of the form $B_{<\lambda}(a) = \{x\in K\mid |x-a|<\lambda\}$, where $a\in K$, $\lambda\in \Gamma_K^\times$. Similarly, a closed ball is a set of the form $B_{\leq \lambda}(a) = \{x\in K\mid |x-a|\leq \lambda\}$. If $B$ is an open ball as above, we denote by $\radop B$ its radius $\lambda$, and similarly we use $\radcl B$ for $\lambda$ if $B$ is closed ball.

For $\lambda\leq 1$ an element of $\Gamma_K^\times$ let $I_\lambda$ be the ideal $\{x\in K\mid |x|<\lambda\}$. We define $\RV_\lambda^\times$ to be $K^\times/(1+I_\lambda)$, with quotient map
\[
\rv_\lambda: K^\times \to \RV_\lambda^\times.
\]
We also consider $\RV_\lambda = \RV_\lambda^\times\cup\{0\}$. The map $\rv_\lambda$ extends to $K\to \RV_\lambda$ via $\rv_\lambda(0)=0$. We will write $\RV = \RV_1$ and $\rv=\rv_1$. The set $\RV$ combines information from the residue field and the value group. Indeed, there is a short exact sequence 
\[
1\to \left (\cO_K/\cM_K\right)^\times \to \RV^\times \to \Gamma_K^\times \to 1.
\]

Now let $\lambda\leq 1$ be in $\Gamma_K^\times$ and let $X$ be a subset of $K$. We say that a finite set $C$ \emph{$\lambda$-prepares $X$} if the following holds: for any $x, y\in K$, if 
\[
\rv_\lambda(x-c) = \rv_\lambda(y-c), \text{ for all } c\in C
\]
then either $x$ and $y$ are both in $X$, or they are both not in $X$. If $(\xi_c)_c\in \RV_\lambda^{\# C}$ then the set
\[
\{x\in K\mid \rv_\lambda(x-c) = \xi_c \text{ for all }c\in C \}
\]
is said to be a ball \emph{$\lambda$-next to $C$} (if it is disjoint from $C$). Note that if such a set is disjoint from $C$, then it is indeed an open ball. We can rephrase preparing as follows. A finite set $C$ $\lambda$-prepares $X$ if for any ball $B$ $\lambda$-next to $C$, either $B\subseteq X$ or $B\cap X = \emptyset$. Note also that the balls $1$-next to a finite set $C$ are precisely the maximal open balls disjoint from $C$.

\subsection{Consequences of Hensel minimality}\label{sec: consequences hminimality}

For this section, we fix a field $K$ of equicharacteristic zero equipped with an $\cL$-structure which is $1$-h-minimal. By \cite{CHR}, we may freely add constants from $K$ to the language $\cL$ and preserve $1$-h-minimality. Many of the results below are formulated only for $\emptyset$-definable objects, but therefore hold just as well for $A$-definable objects, for $A\subset K$.

Hensel minimality implies tameness results on various definable objects. For functions there is the Jacobian property and Taylor approximation.

\begin{thm}[{{\cite[Cor.\,3.2.6]{CHR}, Jacobian property}}]\label{thm: jacobian property}
Let $f: K\to K$ be a $\emptyset$-definable function. Then there exists a finite $\emptyset$-definable set $C$ such that for every $\lambda\leq 1$ in $\Gamma_K^\times$, every ball $B$ $\lambda$-next to $C$ and every $x_0, x\in B$, $x\neq x_0$, we have that
\begin{enumerate}
\item the derivative $f'$ (as defined in the usual way) exists on $B$ and $\rv_\lambda\circ f'$ is constant on $B$,
\item $\rv_\lambda((f(x)-f(x_0)) / (x-x_0) ) = \rv_\lambda(f')$,
\item for any open ball $B'\subset B$, $f(B')$ is either a point or an open ball.
\end{enumerate}
\end{thm}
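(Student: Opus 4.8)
The three assertions are tightly coupled, so the plan is to prove the ``valuative mean value'' statement (2) first and then read off (1) and (3). Granting (2) --- with a single $\emptyset$-definable $C$ that works for \emph{all} $\lambda \le 1$ simultaneously --- the rest follows by soft ultrametric arguments. Fixing $x_0 \in B$, the difference quotients $\frac{f(x)-f(x_0)}{x-x_0}$ have, for each $\lambda$, a common value $\xi_\lambda \in \RV_\lambda$ under $\rv_\lambda$ as $x$ ranges over $B \setminus \{x_0\}$, and the $\xi_\lambda$ form a coherent definable system as $\lambda$ shrinks inside $\Gamma_K^\times$; together with $1$-h-minimality this forces the limit defining $f'(x_0)$ to exist, with $\rv_\lambda(f'(x_0)) = \xi_\lambda$, hence $\rv_\lambda \circ f'$ constant on $B$ and equal to the common value --- this is (1), and it also identifies the value in (2) with $\rv_\lambda(f')$. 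For (3), if $f$ is non-constant on an open sub-ball $B' \subseteq B$, then (2) gives $|f(x)-f(x_0)| = |\xi|\cdot|x-x_0|$ for a fixed $|\xi| \in \Gamma_K^\times$ and all $x,x_0 \in B'$, so $f$ restricted to $B'$ is injective and multiplies distances by $|\xi|$; with the basic preparation properties this makes $f(B')$ the open ball of radius $|\xi|\cdot\radop B'$ about $f(x_0)$.

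For (2) the real task is to manufacture the finite $\emptyset$-definable set $C$. Consider the $\emptyset$-definable two-variable function $F(x_0,x) = \frac{f(x)-f(x_0)}{x-x_0}$ on $\{x \ne x_0\}$. Since adjoining constants to $\cL$ preserves $1$-h-minimality, for each fixed $a \in K$ the basic preparation consequences of $1$-h-minimality yield a finite $\{a\}$-definable set $C_a$ that $\lambda$-prepares, simultaneously for all $\lambda \le 1$, the level sets $\{x : \rv_\lambda(F(a,x)) = \xi\}$ of the one-variable function $F(a,\cdot)$; moreover the cardinality bound on $C_a$ supplied by $1$-h-minimality depends only on the number of $\RV_\lambda$-parameters (here none), hence is a constant $N$ independent of $a$, and a compactness argument lets one take $\{(a,c) : c \in C_a\}$ to be $\emptyset$-definable. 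If one has a \emph{single} finite $\emptyset$-definable $C$ serving as a valid $C_a$ for every $a$ --- i.e. such that for every ball $B$ that is $\lambda$-next to $C$ and every $x_0 \in B$ the map $x \mapsto \rv_\lambda(F(x_0,x))$ is constant on $B$ --- then (2) follows: take $B$ $\lambda$-next to $C$ and distinct $x_0,x_1 \in B$; constancy gives $\rv_\lambda(F(x_0,x)) \equiv \eta(x_0)$ and $\rv_\lambda(F(x_1,x)) \equiv \eta(x_1)$ on $B$, and evaluating the first at $x_1$, the second at $x_0$, and using $F(x_0,x_1) = F(x_1,x_0)$ gives $\eta(x_0) = \eta(x_1)$, so the common value is independent of the base point as well.

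The genuine obstacle is this last step --- passing from the uniformly bounded definable family $(C_a)_a$ to one finite preparing set $C$ --- because $1$-h-minimality is by design a one-variable hypothesis while $F$ is irreducibly two-variable. The remedy is the stabilization/fixed-point machinery of h-minimality: feed the definable set $\bigcup_a (\{a\}\times C_a)$ back into the preparation process and iterate, exactly as in the proof that definable functions admit uniform preparation, to obtain a finite $\emptyset$-definable $C$ that is ``closed'' under the family operation. Everything else is bookkeeping with the ultrametric and with the invariance of $1$-h-minimality under adding parameters. This is precisely the content packaged in \cite[Sec.\,3.2]{CHR}, which we quote rather than reprove.
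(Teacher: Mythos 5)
This theorem is not proved in the paper: it is imported verbatim from Cluckers--Halupczok--Rideau-Kikuchi, cited in the theorem header as \cite[Cor.\,3.2.6]{CHR}, and there is no \texttt{proof} environment following it. So there is no internal argument to compare your sketch against; the paper, like the last paragraph of your proposal, simply quotes CHR.

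Evaluated on its own terms, your sketch has a genuine gap at the very first step, where you derive (1) from (2). You claim that the coherent system $(\xi_\lambda)_\lambda$ ``forces the limit defining $f'(x_0)$ to exist.'' This is not a valid argument. Coherence of the $\xi_\lambda$ determines an element of the inverse limit of the groups $\RV_\lambda^\times$, not an element of $K$; the natural map from $K^\times$ into that inverse limit is injective but in general not surjective for an equicharacteristic-zero henselian field, which need not be complete, so no candidate for $f'(x_0)$ in $K$ is produced. (One also has to be careful that the ball $B$ in the statement \emph{depends on} $\lambda$, so the constancy assertions for different $\lambda$ live on shrinking balls around $x_0$, not a single one.) Moreover, statement (2) as written already mentions $f'$, so it cannot logically precede (1): what you could hope to prove first is the weaker assertion that the difference quotients have constant $\rv_\lambda$ on $B$, and then (1) does not drop out for free. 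In CHR the almost-everywhere existence and definability of derivatives is a separate, prior theorem with its own proof (a saturation/definability-of-the-limit argument), and the Jacobian property is derived only after that. The implication ``(2) implies (1)'' is therefore the step in your outline that would fail if one tried to carry it out. The rest of the sketch --- preparing the two-variable difference quotient, uniformizing the family $(C_a)_a$ to a single finite $C$, and reading off (3) from the scaling behaviour --- is reasonable at the level of slogans, and you are right to defer the actual uniformization machinery to CHR rather than re-derive it.
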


Note in particular that (1) implies that $|f'|$ is constant on balls 1-next to $C$, since $\Gamma_K$ is a quotient of $\RV_\lambda$. We will also use the following corollary. Recall that $\tilde{k}\subset K$ is a lift of the residue field.

\begin{cor} \label{cor:jac}
Let $\tilde{k}\subset K$ be a lift of the residue field of $K$. Let $f \colon \cO_K \to \cO_K$ be a $\emptyset$-definable function. Then, for all but finitely many $a \in \tilde{k}$ the following property holds: for all $x, x_0 \in a + \cM_K$
\[
\abs{f(x) - f(x_0)} \leq \abs{x - x_0}.
\]
\end{cor}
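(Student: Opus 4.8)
The plan is to deduce Corollary~\ref{cor:jac} directly from the Jacobian property (Theorem~\ref{thm: jacobian property}) by specializing to the closed balls $a + \cM_K$ for $a \in \tilde{k}$, noting that these are precisely the maximal open balls $1$-next to a suitable finite set. First I would apply Theorem~\ref{thm: jacobian property} to $f$ with $\lambda = 1$, obtaining a finite $\emptyset$-definable set $C \subset K$ such that for every ball $B$ that is $1$-next to $C$ and every $x_0, x \in B$ with $x \neq x_0$, we have $\rv((f(x) - f(x_0))/(x-x_0)) = \rv(f')$, where $f'$ is the (constant up to $\rv$) derivative on $B$. Since the balls $1$-next to $C$ are exactly the maximal open balls disjoint from $C$, and since $C \cap \cO_K$ is finite, all but finitely many of the residue classes $a + \cM_K$ (for $a \in \tilde{k}$) are disjoint from $C$; each such class is then a single ball $1$-next to $C$. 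Let $a \in \tilde{k}$ be one of these good classes.

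The second step is to control $\abs{f'}$ on the ball $B = a + \cM_K$. By part (1) of Theorem~\ref{thm: jacobian property}, $\rv \circ f'$ is constant on $B$, and since $\Gamma_K^\times$ is a quotient of $\RV^\times$, also $\abs{f'}$ is constant on $B$. Here I use the hypothesis that $f$ maps $\cO_K$ into $\cO_K$: for $x, x_0 \in B \subseteq \cO_K$ we have $f(x), f(x_0) \in \cO_K$, hence $\abs{f(x) - f(x_0)} \leq 1$, while $\abs{x - x_0} < 1$ (as both lie in the same class $a + \cM_K$). If $\abs{f'} \leq 1$ on $B$, then by part (2), $\abs{f(x) - f(x_0)} = \abs{f'} \cdot \abs{x - x_0} \leq \abs{x-x_0}$ and we are done for this $a$. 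So the only thing left to rule out is $\abs{f'} > 1$ on $B$.

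The remaining step — which I expect to be the only real point requiring an argument rather than bookkeeping — is to show that $\abs{f'} > 1$ can fail for at most finitely many classes $a$. Suppose $\abs{f'} > 1$ on $B = a + \cM_K$. Pick any $x_0 \in B$; then part (3) (or directly part (2)) shows that as $x$ ranges over $B$, the values $f(x)$ range over a ball around $f(x_0)$ of radius $\abs{f'} \cdot \radop(B) = \abs{f'} > 1$ — wait, more carefully: $\radop(B) = $ the radius of $a + \cM_K$, which is the largest element of $\Gamma_K^\times$ strictly below $1$ if it exists, or just ``infinitesimally below $1$''; in any case part (2) gives $\abs{f(x) - f(x_0)} = \abs{f'}\cdot\abs{x-x_0}$, and choosing $x$ with $\abs{x - x_0}$ close to $1$ makes $\abs{f(x) - f(x_0)}$ close to $\abs{f'} > 1$, so $f(x) \notin \cO_K$, contradicting $f(\cO_K) \subseteq \cO_K$. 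Hence $\abs{f'} > 1$ never happens on any $B$ disjoint from $C$, and therefore the desired inequality holds for all but the finitely many $a \in \tilde{k}$ whose class meets $C$. Thus the hypothesis $f \colon \cO_K \to \cO_K$ is doing double duty: it forces $\abs{f'} \leq 1$ on good balls, which via part (2) yields the Lipschitz bound. The only subtlety to write carefully is the existence of a point $x \in a + \cM_K$ with $\abs{x - x_0}$ as large as desired below $1$, which holds because $a + \cM_K$ is a nontrivial ball (the valued field is non-trivially valued) and in fact $\abs{x - x_0}$ can be taken to be any value in $\Gamma_K^\times$ with $\abs{x-x_0} < 1$.
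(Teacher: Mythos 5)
Your argument has a genuine gap in the discretely valued case, which the paper's proof carefully handles and yours does not.

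You claim that $\abs{f'} > 1$ on a ball $B = a + \cM_K$ disjoint from $C$ is impossible, arguing that one could then find $x, x_0 \in B$ with $\abs{f(x) - f(x_0)} = \abs{f'}\,\abs{x-x_0} > 1$, contradicting $f(\cO_K) \subseteq \cO_K$. But this step fails if the value group is discrete: if $\Gamma_K^\times \cong \ZZ$ (say $K = k((t))$) and $\abs{f'}$ equals the smallest element of $\Gamma_K^\times$ strictly above $1$, i.e.\ $\abs{f'} = \abs{t}^{-1}$, then every $x, x_0 \in a + \cM_K$ satisfy $\abs{x - x_0} \leq \abs{t}$, so $\abs{f(x) - f(x_0)} \leq \abs{f'}\,\abs{t} = 1$, and $f(x)$ stays in $\cO_K$ after all. ``$\abs{x - x_0}$ as close to $1$ as desired'' does not yield a value greater than $\abs{f'}^{-1}$ when $\abs{f'}^{-1}$ itself is the maximum of $\Gamma_K^\times \cap \{\gamma < 1\}$. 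Your closing remark that $\abs{x-x_0}$ can be any value $<1$ in $\Gamma_K^\times$ is true but does not save the argument, because you need $\abs{f'}\,\abs{x-x_0} > 1$, not merely $\abs{x-x_0}$ close to $1$.

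The paper's proof accepts that $\mu_B > 1$ can occur (precisely when the value group is discrete), and then observes that in that case $f(B)$ is an open ball of radius $\mu_B \cdot 1 > 1$ contained in $\cO_K$, forcing $f(B) = \cO_K$. This can hold for at most finitely many such $B$, for otherwise $f^{-1}(y)$ would be infinite for every $y \in \cO_K$, contradicting the uniform-finiteness result \cite[Lem.\,2.8.1]{CHR}. To complete your proof you would need to add this second finiteness argument (or restrict attention to densely valued fields, which is not an assumption in the corollary).
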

\begin{proof}
By the Jacobian property there exists a finite $\emptyset$-definable set $C \subseteq \cO_K$ such that for every ball $B$ which is 1-next to $C$ there is a $\mu_B \in \Gamma_K$ such that for all $x,x_0 \in B$
\[
\abs{f(x) - f(x_0)} = \mu_B \abs{x - x_0}.
\]
Moreover, by \cite[Cor.\,3.1.6]{CHR} we may additionally assume that if such a $B$ is open, then $f(B)$ is an open ball of radius $\mu_B \radop(B)$.
As $C$ is finite and the balls $a + \cM_K$ with $a \in \tilde{k}$ are pairwise disjoint, it follows that only finitely many $a + \cM_K$ contain a point of $C$. Thus all but finitely many $a + \cM_K$ are $1$-next to $C$. 
Now suppose  $\mu_B > 1$ for some $B = a + \cM_K$ which is disjoint from $C$. This implies that $f(a + \cM_K) = \cO_K$ (and this is only possible if the value group is discrete). In particular we can only have $\mu_B > 1$ for finitely many such $B$. Indeed, else $f^{-1}(y)$ would be infinite for all $y \in \cO_K$, contradicting \cite[Lem.\,2.8.1]{CHR}. Hence, the desired property holds for cofinitely many $a \in \tilde{k}$.
\end{proof}

The second result we need is about Taylor approximation. For a function $f: X\subset K\to K$ on an open set $X$ which is $r$-fold differentiable and $x_0\in X$ we define the \emph{$r$-th order Taylor polynomial of $f$ at $x_0$} to be as usual
\[
T_{f, x_0}^{\leq r}(x) = T_{f, x_0}^{<r+1}(x) = \sum_{i=0}^r \frac{f^{(i)}(x_0)}{i!}(x-x_0)^i.
\]
The following result basically states that any definable function can be well approximated by its Taylor polynomial up to some fixed order, at least away from finitely many points.

\begin{thm}[{{\cite[Thm.\,3.2.2]{CHR}, Taylor approximation of order $r$}}]\label{thm: taylor approximation}
Let $f: K\to K$ be a $\emptyset$-definable function and fix a positive integer $r$. Then there exists a finite $\emptyset$-definable set $C$ such that for every ball $B$ 1-next to $C$, $f$ is $(r+1)$-fold differentiable on $B$, $|f^{(r+1)}|$ is constant on $B$ and for $x, x_0\in B$ we have
\[
|f(x)-T_{f, x_0}^{\leq r}(x)| \leq |f^{(r+1)}(x_0)(x-x_0)^{r+1}|.
\]
\end{thm}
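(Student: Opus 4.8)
The statement follows the pattern of the classical Taylor estimate, and the plan is to prove it by induction on $r$, using the Jacobian property (Theorem~\ref{thm: jacobian property}) both as the base case and as the tool that drives the inductive step. Note first that by the Jacobian property $f$ is differentiable on all balls $1$-next to some finite $\emptyset$-definable set, so (after extending it by $0$ on that set) $f'$ is again a $\emptyset$-definable function $K \to K$, and likewise for all higher derivatives in turn; in particular the Taylor polynomials $T_{f, x_0}^{\leq r}$ make sense off a finite set, and throughout we silently absorb into $C$ the finitely many exceptional points produced by such applications of the Jacobian property.

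For $r = 0$ the assertion is exactly parts (1) and (2) of the Jacobian property with $\lambda = 1$: on a ball $B$ that is $1$-next to the associated finite set, $|f'|$ is constant and $|f(x) - f(x_0)| = |f'(x_0)|\,|x - x_0| \le |f'(x_0)(x - x_0)|$ for $x, x_0 \in B$. For the inductive step, apply the statement at order $r - 1$ to the $\emptyset$-definable function $f'$: there is a finite $\emptyset$-definable set such that on each of its $1$-next balls $B$, $f'$ is $r$-fold differentiable, $|(f')^{(r)}| = |f^{(r+1)}|$ is constant, and $|f'(x) - T_{f', x_0}^{\leq r-1}(x)| \le |f^{(r+1)}(x_0)(x - x_0)^{r}|$ for all $x, x_0 \in B$. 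Enlarging $C$ to include this set, we get that on a ball $B$ that is $1$-next to $C$, $f$ is $(r{+}1)$-fold differentiable and $|f^{(r+1)}|$ is constant, so it remains to deduce the order-$r$ estimate. Fix such a $B$ and $x_0 \in B$, and set $g(x) = f(x) - T_{f, x_0}^{\leq r}(x)$. Since $\tfrac{d}{dx}T_{f, x_0}^{\leq r}(x) = T_{f', x_0}^{\leq r-1}(x)$, we have $g(x_0) = 0$ and $|g'(x)| \le |f^{(r+1)}(x_0)|\,|x - x_0|^{r}$ for all $x \in B$, so everything reduces to the implication
\[
g(x_0) = 0,\ |g'(x)| \le M\,|x - x_0|^{r}\ \text{on } B \ \Longrightarrow\ |g(x)| \le M\,|x - x_0|^{r+1}\ \text{on } B,
\]
with $M = |f^{(r+1)}(x_0)|$.

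This last implication — a non-archimedean "finite increments" inequality — is where the real work lies, since $|g'|$ genuinely varies over $B$, so $g$ is \emph{not} one of the functions for which the Jacobian property directly yields constant $|g'|$. The way I would argue: for a fixed $x \in B$ with $\rho \coloneqq |x - x_0|$, restrict attention to the closed sub-ball $B' = B_{\leq \rho}(x_0) \subseteq B$, on which $|g'(y)| \le M \rho^{r}$ for every $y$; it then suffices to know the general fact that for a $\emptyset$-definable $h \colon K \to K$ and a closed ball $B'$ one has $|h(u) - h(v)| \le \bigl(\sup_{y \in B'}|h'(y)|\bigr)\,|u - v|$ for all $u, v \in B'$, applied with $h = g$, $u = x$, $v = x_0$. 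That fact is itself extracted from the Jacobian property in full (all $\lambda \le 1$) together with the ultrametric inequality: the finitely many points of the exceptional set of $h$ lying in $B'$ cut $B'$ into regions on each of which part (2) of the Jacobian property gives $|h(u') - h(v')| = |h'|_{\mathrm{loc}}\,|u' - v'| \le (\sup_{B'}|h'|)\,|u'-v'|$, and one chains these estimates through $B'$, all the relevant distances being $\le \rho$. Organizing this subdivision cleanly — and doing it uniformly as $x_0$ varies over $B$ — is the main technical obstacle.

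An alternative packaging of the same content, possibly cleaner to write up, is via iterated divided differences. The divided differences $f_k$ of $f$ are $\emptyset$-definable, extend continuously to the diagonal with value $f^{(k)}(x_0)/k!$ there, and iterating the decomposition $f(x) = f(x_0) + f_1(x_0,x)(x - x_0)$ yields the Newton-form identity $f(x) = T_{f, x_0}^{\leq r}(x) + f_{r+1}(x_0, \dots, x_0, x)\,(x - x_0)^{r+1}$. A single application of the Jacobian property to $f_{r+1}$ in its last variable controls its oscillation near the diagonal, giving $|f_{r+1}(x_0, \dots, x_0, x)| \le |f_{r+1}(x_0,\dots,x_0,x_0)| = |f^{(r+1)}(x_0)|$ (the factorial being a unit in equicharacteristic $0$), which is exactly the desired estimate. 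Either route finishes by adding to $C$ the finitely many exceptional points produced along the way.
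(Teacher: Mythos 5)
This statement is quoted from \cite[Thm.\,3.2.2]{CHR}; the paper under review does not prove it, so your proposal must stand on its own.

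Your inductive skeleton (base case from the Jacobian property; in the step, reduce to bounding $g(x) = f(x) - T^{\leq r}_{f,x_0}(x)$ using the bound $|g'(y)|\le M|y-x_0|^r$) is a reasonable plan, but the pivot you rely on is false. The assertion you call a ``general fact'' --- that for a definable $h\colon K \to K$ and a closed ball $B'$ one has $|h(u)-h(v)| \le \bigl(\sup_{y\in B'}|h'(y)|\bigr)|u-v|$ for $u,v\in B'$ --- has easy counterexamples even for continuous, everywhere-differentiable, definable functions satisfying the Jacobian property. Take $K = \CC((t))$ and $h\colon \cO_K\to\cO_K$, $h(x)=tx$ for $|x|=1$ and $h(x)=1$ for $|x|<1$. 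Then $h$ is definable, $h'$ exists everywhere with $|h'|\le|t|$, and $h$ satisfies the Jacobian property with exceptional set $\{0\}$; yet $|h(1)-h(0)| = |t-1| = 1 > |t| = \sup_{\cO_K}|h'|\cdot|1-0|$. The structural reason is that the exceptional set does not ``cut $B'$ into regions'' in any sense that permits linear chaining: in the tree of sub-balls, derivative control on each ball $\lambda$-next to the exceptional set does not propagate across siblings, and the function can drift by an arbitrary amount when passing between them. So the implication you isolate as ``where the real work lies'' is not merely unorganised --- as stated it is not true, and any correct proof must use properties of $g$ beyond ``$g$ is definable and $|g'|$ is pointwise bounded''.

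The divided-difference variant has a parallel gap. The Newton identity $f(x) = T^{\leq r}_{f,x_0}(x) + f_{r+1}(x_0,\dots,x_0,x)(x-x_0)^{r+1}$ is fine, but ``a single application of the Jacobian property to $f_{r+1}$ in its last variable'' only controls differences $|f_{r+1}(x_0,\dots,x_0,x) - f_{r+1}(x_0,\dots,x_0,x')|$ on balls 1-next to an $x_0$-dependent finite set; it does not directly bound $|f_{r+1}(x_0,\dots,x_0,x)|$ by the diagonal value $|f_{r+1}(x_0,\dots,x_0,x_0)|$, and the same ``chaining across the exceptional set'' obstruction reappears. The actual proof in \cite{CHR} is substantially more delicate and does not reduce to a non-archimedean mean-value inequality; to make your outline rigorous you would at minimum need to identify and prove the correct replacement for that inequality.
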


We will also need results on cell decomposition. If $\cT$ is a 1-h-minimal theory, then by \cite[Prop.\,4.3.3]{CHR} there is an expansion of the language by predicates on cartesian powers of $\RV$ such that the resulting structure is still 1-h-minimal and we have $\acl=\dcl$. In particular, we can typically assume that $\acl=\dcl$ without any problems. 

\begin{defn}
Let $A\subset K$ be a parameter set. For $n\geq m$, denote by $\pi_{\leq m}: K^n\to K^m$ the projection on the first $m$ coordinates. Let $X\subset K^n$ be an $A$-definable set. Consider, for $i=1, ..., n$, values $j_i\in \{0,1\}$ and $A$-definable functions $c_i: \pi_{<i}(X)\to K$. Fix also an $A$-definable set
\[
R\subseteq \prod_{i=1}^n (j_i\cdot \RV^\times),
\]
where $0\cdot \RV^\times = \{0\}$. We say that $X$ is an $A$-definable cell if 
\[
X = \{x\in K^n \mid \rv(x_i-c_i(\pi_{<i}(x)))_{i=1,...,n}\in R \}.
\]
We call $X$ a cell of type $(j_1, ..., j_n)$. The functions $c_i$ are called the \emph{cell centers}. A \emph{twisted box} of the cell $X$ is a set of the form
\[
\{x\in K^n \mid \rv(x_i-c_i(\pi_{<i}(x)))_{i=1,...,n} = r \},
\]
for $r\in R$.
\end{defn}

By \cite[Thm.\,5.2.4]{CHR}, for a $\emptyset$-definable set $X\subseteq K^n$ there always exists a $\emptyset$-definable \emph{cell decomposition}, i.e.\ a partition of $X$ into finitely many $\emptyset$-definable cells $A_\ell$. We will need the following variant. Recall that a function $f: X\subset K^n\to K^m$ is said to be \emph{1-Lipschitz} if for all $x, x'\in X$ we have
\[
|f(x)-f(x')| \leq |x-x'|,
\]
where we use the maximum norm on $K^n$.

\begin{thm}[{{\cite[Thm.\,5.2.4, Add.\,5]{CHR}, Cell decomposition}}]\label{thm:celldecomp.1.lipschitz}
Assume that $K$ carries a 1-h-minimal structure with $\acl=\dcl$. Let $X\subset K^n$ be $\emptyset$-definable. Then there exist a partition of $X$ into finitely many $\emptyset$-definable sets $A_\ell$ such that for every $\ell$ there is some coordinate permutation $\sigma_\ell: K^n\to K^n$ such that $\sigma_\ell(A_\ell)$ is a cell of type $(1, ..., 1, 0, ..., 0)$ and such that each component of each center is 1-Lipschitz.
\end{thm}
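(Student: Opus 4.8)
The plan is to start from the plain cell decomposition of \cite[Thm.\,5.2.4]{CHR} and refine it, by induction on $n$, until after a coordinate permutation every piece has type $(1,\dots,1,0,\dots,0)$ with $1$-Lipschitz centers. Throughout we may assume $\acl=\dcl$: by \cite[Prop.\,4.3.3]{CHR} this is arranged by an expansion of the language by predicates on powers of $\RV$ which preserves $1$-h-minimality, and it is what makes the auxiliary ``solve for one coordinate'' functions and definable sections used below genuinely definable. For $n=1$ there is nothing to do: a $\emptyset$-definable cell in $K$ is a point (type $(0)$) or a ball $1$-next to a finite set (type $(1)$), and its center is a constant, hence $1$-Lipschitz.

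For the inductive step, decompose $X$ via \cite[Thm.\,5.2.4]{CHR} and fix a cell $A$ with centers $c_1,\dots,c_n$. On $A$ every type-$0$ coordinate $x_i$ equals $c_i(\pi_{<i}(x))$, so after substituting all such relations each center becomes a function of the preceding type-$1$ coordinates alone; hence the type-$0$ coordinates may be moved to the end of the tuple and we may assume $A$ has type $(1,\dots,1,0,\dots,0)$ with $m$ ones, i.e.\ $A$ is, up to permutation, the graph over an open cell $A'\subseteq K^m$ of a $\emptyset$-definable map $\varphi\colon A'\to K^{n-m}$, where $A'$ has centers $c_1,\dots,c_m$. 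Applying the induction hypothesis to $\pi_{<n}(A)\subseteq K^{n-1}$ and pulling the resulting decomposition back to $A$, we are reduced, on each piece, to making a single center $g$ --- a component of $\varphi$, or the center $c_n$ of the last coordinate --- into a $1$-Lipschitz function of the remaining coordinates, while the earlier coordinates already form, after a permutation, a cell with $1$-Lipschitz centers.

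The mechanism for this is a non-archimedean analogue of the way steep functions are handled in o-minimal cell decomposition. After a further cell decomposition of the domain of $g$, the Jacobian property (Theorem~\ref{thm: jacobian property}), applied to the one-variable restrictions of $g$ in each coordinate together with its family version in \cite{CHR}, lets us assume that on the piece $D$ at hand each such restriction has a constant slope $\mu_i\in\Gamma_K^\times\cup\{0\}$ and that $g$ sends balls to balls in each coordinate direction (Theorem~\ref{thm: jacobian property}(3), \cite[Cor.\,3.1.6]{CHR}). Put $\mu=\max_i\mu_i$. If $\mu\le 1$, then $g$ is $1$-Lipschitz in each variable, hence $1$-Lipschitz for the maximum norm by the ultrametric inequality, and we keep the ordering. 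If $\mu>1$, choose $j$ realizing the maximum; since $g$ is then injective and open in the $x_j$-direction, the equation ``$g=y$'' has a unique definable solution $x_j=h(y,z)$, with $z$ the tuple of remaining coordinates. A short telescoping estimate --- from $g(h(y,z),z)=g(h(y,z'),z')=y$ one gets $\mu\,\lvert h(y,z)-h(y,z')\rvert=\lvert g(h(y,z'),z)-g(h(y,z'),z')\rvert\le\mu\,\lvert z-z'\rvert$, while $\lvert h(y,z)-h(\tilde y,z)\rvert=\mu^{-1}\lvert y-\tilde y\rvert$ --- shows that $h$ is $1$-Lipschitz in each of its variables, hence $1$-Lipschitz; here the choice of the \emph{steepest} direction is used, via $\mu_k\le\mu$ for all $k$. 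The ``balls to balls'' property then shows that swapping $x_j$ with the value of $g$ turns $A$ back into a cell, now with $h$ as the newly created center, which is $1$-Lipschitz. Iterating over the finitely many centers involved and reassembling the pieces yields the decomposition.

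The main obstacle is the bookkeeping in the induction. A swap of $x_j$ against the value of some center $g$ changes which coordinates the \emph{other} centers depend on (a center formerly depending on $x_j$ must now be re-expressed through $y$ and $h$), so the centers have to be processed in a fixed order --- say, treating the one determined by the fewest coordinates first --- so that no center already made $1$-Lipschitz is destroyed (here one uses that a composition of $1$-Lipschitz functions is $1$-Lipschitz) and so that the process terminates. One must also check that the swaps interact correctly with the twisted-box structure of the cell --- in particular, control the radii of the balls that the remaining free coordinates range over after a swap --- and that every refinement stays finite and $\emptyset$-definable, only finitely many ``steep'' pieces ever occurring; the argument in the proof of Corollary~\ref{cor:jac} is the model for this last point. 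Carrying out this combinatorics in the right order is the technical core of the statement, and is exactly what \cite[Thm.\,5.2.4, Add.\,5]{CHR} establishes.
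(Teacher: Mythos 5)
This statement is not proved in the paper at all --- it is quoted verbatim from \cite[Thm.\,5.2.4, Add.\,5]{CHR}, as the bracketed citation in the theorem header indicates. So there is no ``paper's own proof'' to compare against; you are proposing a proof for a result the authors import as a black box. With that caveat, your sketch does capture the standard mechanism for Lipschitz cell decomposition in this context: prepare the centers via the Jacobian property, and when a center is steep, swap the dependent coordinate against the steepest free coordinate and take the (definable, using $\acl=\dcl$) inverse as the new center. The telescoping estimate you give for $h$ --- $\mu\,\lvert h(y,z)-h(y,z')\rvert \le \mu\,\lvert z-z'\rvert$ and $\lvert h(y,z)-h(\tilde y,z)\rvert = \mu^{-1}\lvert y-\tilde y\rvert$, with the steepest direction guaranteeing $\mu_k\le\mu$ for all $k$ --- is correct and is indeed the crux. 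This is the same coordinate-swap idea that underlies Lipschitz cell decomposition in the real (Kurdyka--Parusi\'nski), $p$-adic analytic (Cluckers--Comte--Loeser), and Hensel minimal settings, so the approach matches the reference in spirit.

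That said, the sketch offloads the genuinely hard parts onto the reference and leaves gaps that are more than bookkeeping. The assertion that ``the `balls to balls' property then shows that swapping $x_j$ with the value of $g$ turns $A$ back into a cell'' is the actual content of the theorem and is not justified: one has to produce, after the swap, an honest twisted-box presentation with new centers and a new $\RV$-parameter set $R$, which requires a further application of 1-h-minimal preparation to the image of $g$ and to the family $h(y,\cdot)$, not merely the pointwise Jacobian estimate. The inductive step as stated is also not quite sound: the pullback of a Lipschitz cell decomposition of $\pi_{<n}(A)$ along $\pi_{<n}\on{A}$ need not itself be a cell in $K^n$, and reconciling the permutation produced by the inductive hypothesis on the first $n-1$ coordinates with the permutation needed for the $n$-th center (which may want to swap into a coordinate the IH has already fixed) is exactly the ordering problem you flag but do not resolve. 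Finally, the ``finitely many steep pieces'' termination argument you model on Corollary~\ref{cor:jac} does not transfer directly: that corollary controls steepness over a fixed family of residue balls, whereas here the balls change under each swap. So the sketch is a faithful outline of the right idea but is not a proof; the substance lies precisely in the steps you defer to \cite{CHR}.
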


\subsection{$T_r$-approximation}\label{sec:Trapprox}

We recall some useful definitions and results from \cite[\S 4.2]{CFL} about $T_r$-approximation. Let $K$ be a henselian valued field of equicharacteristic zero which is 1-h-minimal in some language $\cL$ expanding the language of valued fields.

\begin{defn}\label{def: T_r-approximation}
Let $U\subseteq K^m$ be an open set, let $\psi=(\psi_1, ...,\psi_n):U\to \mathcal{O}_K^n$ be a function, and let $r>0$ be an integer. We say that $\psi$ satisfies \emph{$T_r$-approximation} if for each $y\in U$ there is an $n$-tuple $T_{y}^{<r}$ of polynomials with coefficients in $\mathcal{O}_K$ and of degree less than $r$ that satisfies
\begin{equation}
|\psi(x)-T_{y}^{<r}(x)|\leq |x-y|^r \qquad \text{for all }\; x\in U.
\end{equation}
Let $X$ be a definable subset of $\cO_K^n$ of dimension $m$. We say that a definable family $(\varphi_i)_{i\in I}$ of functions $\varphi_i:U_i\to X_i\subseteq \mathcal{O}_K^n$ is a \emph{$T_r$-parametrization of $X$} if $X=\bigcup_{i\in I}X_i$ and each $\varphi_i$ is surjective and satisfies $T_r$-approximation.
\end{defn}

\begin{defn}\label{def: Notations}
Let $\alpha=(\alpha_1,...,\alpha_m)\in \NN^m$ and define $|\alpha|=\alpha_1+\cdots +\alpha_m$. We define the following sets and numbers:
\begin{itemize}
\item $\Lambda_m(k):=\{\alpha\in \NN^m;\; |\alpha|=k\}$,
\item $\Delta_m(k):=\{\alpha\in \NN^m;\; |\alpha|\leq k\}$,
\item $L_m(k):=\#\Lambda_m(k)$ and $D_m(k):=\#\Delta_m(k)$ 
\end{itemize}
\end{defn}

Note that $L_m(k)$ (resp.\ $D_m(k)$) is the number of monomials of degree exactly (resp.\ at most) $k$ in $m$ variables.

Fix an integer $d$ and define, for all integers $n$ and $m$ such that $m<n$ the following integers.

\begin{equation}\label{eq: def mu r V e}
\begin{aligned}
\mu(n,d) &=D_n(d) \qquad& r(m,d)&=\min\{x\in \ZZ; \; D_m(x-1)\leq \mu < D_m(x)\}\\
V(n,d) &=\sum_{k=0}^d k L_n(k) \qquad & e(n,m,d) &=\sum_{k=1}^{r-1} kL_m(k)+r(\mu-D_m(r-1)).
\end{aligned}
\end{equation}

To apply the determinant method, we need the following lemma.

\begin{lem}\label{lem:determinant.estimate}
Let $K$ be a henselian field of equicharacteristic $0$. Let $t$ be a pseudo-uniformizer of $K$. Fix integers $\mu, r$ and $U$ an open subset of $K^m$ which is contained in a product of $m$ closed balls of radius $|t|^\rho$, where $\rho\geq 0$ is an integer. Fix $x_1, ..., x_\mu\in U$ and functions $\psi_1, ..., \psi_\mu: U\to K$. Assume that the $\psi_i$ satisfy $T_r$-approximation on $U$ for some integer $r$ with
\[
D_m(r-1)\leq \mu < D_m(r).
\]
Then 
\[
|\det (\psi_i(x_j))_{i,j})| \leq |t|^{\rho e}.
\]
\end{lem}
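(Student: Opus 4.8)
The plan is to exploit the determinant as a multilinear alternating function of the rows, together with $T_r$-approximation, to build up a large power of $t$ that divides the determinant. First I would fix, for each $i$, the polynomial $T^{<r}_{x_i}$ of degree $<r$ provided by $T_r$-approximation at the base point $x_i \in U$, so that $|\psi_i(x) - T^{<r}_{x_i}(x)| \le |x - x_i|^r$ for all $x \in U$. Since $U$ is contained in a product of $m$ closed balls of radius $|t|^\rho$, we have $|x_j - x_i| \le |t|^\rho$ for all $i,j$, and hence $|\psi_i(x_j) - T^{<r}_{x_i}(x_j)| \le |t|^{\rho r}$. Thus, modulo an error of valuation at least $\rho r$, each entry $\psi_i(x_j)$ of the matrix can be replaced by the value $T^{<r}_{x_i}(x_j)$ of a polynomial of degree $<r$. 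The key point is that the space of such polynomials is spanned by the $D_m(r-1)$ monomials $x^\alpha$ with $\alpha \in \Delta_m(r-1)$, and we may order these monomials by total degree.

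Next I would expand the determinant multilinearly in the rows. Writing $T^{<r}_{x_i} = \sum_{\alpha \in \Delta_m(r-1)} c_{i,\alpha} x^\alpha$ with $c_{i,\alpha} \in \cO_K$, the matrix $(T^{<r}_{x_i}(x_j))_{i,j}$ is a product of the $\mu \times D_m(r-1)$ coefficient matrix $(c_{i,\alpha})$ with the (generalized Vandermonde) matrix $(x_j^\alpha)_{\alpha, j}$. Since $\mu \ge D_m(r-1)$, the Cauchy--Binet formula expresses $\det$ as a sum of products of a $D_m(r-1) \times D_m(r-1)$ minor of the coefficient matrix (which lies in $\cO_K$, hence has norm $\le 1$) with the corresponding minor of the Vandermonde-type matrix. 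Each such Vandermonde minor is an alternating polynomial in the columns $x_1, \dots, x_{D_m(r-1)}$, and a standard estimate shows its value has norm at most $|t|^{\rho V'}$, where $V'$ counts the total degree contributed: for a set of $D_m(r-1)$ distinct monomials, the minimal total degree is achieved by taking all monomials of degree $\le r-1$, contributing $\sum_{k=1}^{r-1} k L_m(k)$. More precisely, row-reducing the Vandermonde matrix one extracts a factor $|x_j - x_{j'}|$ (of norm $\le |t|^\rho$) for each pair, and bookkeeping gives exactly the exponent $\sum_{k=1}^{r-1} k L_m(k)$ for a $D_m(r-1)$-sized minor. But $\mu$ may be strictly larger than $D_m(r-1)$: the remaining $\mu - D_m(r-1)$ rows must then be handled by pairing them with monomials of degree exactly $r$ (using that the $T^{<r}_{x_i}(x_j)$ only approximate $\psi_i(x_j)$ up to $|t|^{\rho r}$, equivalently absorbing those rows as exact error terms of valuation $\ge \rho r$). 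Combining, the total exponent of $|t|$ is at least
\[
\rho\left( \sum_{k=1}^{r-1} k L_m(k) + r\bigl(\mu - D_m(r-1)\bigr) \right) = \rho e,
\]
which is precisely the definition of $e = e(n,m,d)$ specialized to these $\mu, r, m$.

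I would organize the argument so that the error terms and the exact polynomial part are treated uniformly: expand $\psi_i(x_j) = T^{<r}_{x_i}(x_j) + \eta_{ij}$ with $|\eta_{ij}| \le |t|^{\rho r}$, then in the multilinear expansion of $\det(\psi_i(x_j))$ over choices of "polynomial row" versus "error row", every term in which at least $\mu - D_m(r-1)+1$ rows are polynomial rows vanishes (since more than $D_m(r-1)$ distinct rows of a matrix of rank $\le D_m(r-1)$ are linearly dependent over the fraction field, making that minor zero after the Cauchy--Binet step), so the surviving terms use at least $\mu - D_m(r-1)$ error rows, each contributing $|t|^{\rho r}$, while the polynomial rows contribute the Vandermonde factor bounded by $|t|^{\rho \sum_{k=1}^{r-1} kL_m(k)}$ via the minimal-total-degree count. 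The main obstacle I anticipate is making the combinatorial bookkeeping of the Vandermonde minor estimate precise — i.e.\ proving cleanly that any $j \times j$ minor of the generalized Vandermonde matrix $(x_\ell^\alpha)$ with $j = D_m(r-1)$ and distinct $x_\ell$ in a ball of radius $|t|^\rho$ has norm $\le |t|^{\rho \sum_{k=1}^{r-1} kL_m(k)}$ — which requires a careful induction on the degree, extracting differences $x_\ell - x_{\ell'}$ column by column as in the classical one-variable Vandermonde argument but keeping track of the multi-index degrees; everything else is ultrametric estimation and the Cauchy--Binet identity. This combinatorial lemma is essentially the content of the analogous step in \cite{CFL}, so I would either cite it directly or reprove it in this ultrametric form.
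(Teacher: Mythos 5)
Your overall plan --- expand via $T_r$-approximation, split the determinant into a low-rank polynomial part plus small errors, and count exponents --- is the right idea and does produce $e$ at the end. But you choose to Taylor-expand each $\psi_i$ at its own sample point $x_i$, which forces the polynomial part $(T_{x_i}^{<r}(x_j))_{i,j}$ into a ``coefficient matrix times generalized Vandermonde'' shape, and you then need a Vandermonde minor estimate. The way you describe that estimate is wrong for $m>1$: ``extracting a factor $|x_j-x_{j'}|$ for each pair'' would give exponent $\binom{D_m(r-1)}{2}$, which equals $\sum_{k=1}^{r-1} k L_m(k)$ only when $m=1$. For $m>1$ it overcounts; e.g.\ $m=2$, $r=2$ gives $D_2(1)=3$, $\sum_k kL_2(k)=2$, yet $\binom{3}{2}=3$, and with $x_1=(0,0)$, $x_2=(t^\rho,0)$, $x_3=(0,t^\rho)$ the Vandermonde determinant has valuation exactly $2\rho$, so a bound of $|t|^{3\rho}$ is false. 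The correct bound comes from translating: writing $y_j=x_j-x_1$ one has $\det(x_j^\alpha)_{\alpha\in\Delta_m(r-1),j}=\det(y_j^\alpha)$, because the change-of-basis matrix on the monomial index $(\alpha,\beta)\mapsto\binom{\alpha}{\beta}x_1^{\alpha-\beta}$ is unitriangular for the total-degree filtration; then $|y_j^\alpha|\leq|t|^{\rho|\alpha|}$ gives the exponent $\sum_\alpha|\alpha|=\sum_{k=1}^{r-1}kL_m(k)$ directly. There is also a slip of signs: the vanishing terms are those with at least $D_m(r-1)+1$ polynomial rows, not ``at least $\mu-D_m(r-1)+1$'' (the subsequent conclusion about $\mu-D_m(r-1)$ error rows is what you actually need, and it follows from the corrected threshold).

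The proof the paper points to, \cite[Lem.\,3.3.1]{CCL-PW}, avoids the Vandermonde estimate altogether by Taylor-expanding \emph{all} the $\psi_i$ at a single base point $y_0\in U$ and expanding the determinant multilinearly in the \emph{columns}: column $j$ becomes $\sum_\alpha(x_j-y_0)^\alpha c_\alpha + E_j$, every nonzero term picks pairwise distinct $\alpha_j$ for at most $D_m(r-1)$ columns (each gaining $|t|^{\rho|\alpha_j|}$ straight from $|x_j-y_0|\leq|t|^\rho$) and errors for the remaining $\geq \mu-D_m(r-1)$ columns (each gaining $|t|^{\rho r}$), and the minimal total exponent is $e$. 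Your row-based variant can be completed using Laplace expansion, two applications of Cauchy--Binet and the translated-Vandermonde bound, but that is substantially more work than the single-base-point column expansion and, as written, the key Vandermonde step is incorrect.
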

\begin{proof}
Straightforward adapation from \cite[Lem.\,3.3.1]{CCL-PW}.
\end{proof}

\section{Counting rational points on transcendental and algebraic curves} \label{sec:counting}

In this section we prove Theorems~\ref{thm:CC-c-dim}, \ref{thm:optimal}, and~\ref{thm:alg.curves}. Throughout, let $K$ be an equicharacteristic zero henselian valued field, equipped with a 1-h-minimal $\cL$-structure, for some language $\cL$ expanding the language of valued fields. We assume that $\acl=\dcl$ in $K$ and that the subgroup of $b$-th powers in the residue field $k^\times$ has finite index, for some $b>1$. Fix a lift $\tilde{k}$ of the residue field and fix a pseudo-uniformizer $t$.

\subsection{$T_r$-parametrizations}

Crucial to our approach is the following theorem, which asserts the existence of $T_r$-parametrizations for definable planar curves. 

\begin{thm}\label{thm:Tr.param.curves}
	Let $K$ be an equicharacteristic zero valued field, equipped with a 1-h-minimal $\cL$-structure. Assume that $\acl=\dcl$ in $K$ and that the subgroup of $b$-th powers of $k^\times$ has finite index, for some $b>1$. Let $Y\subset K^n$ be a definable set. Fix a positive integer $r$ and let $C\subset Y\times \cO_K^2$ be a definable set such that for every $y\in Y$, $C_y$ is a curve. Then there exist finitely many maps $\phi_1, \ldots, \phi_N: Y\times \cO_K\to C$ such that for every $y\in Y$, $\phi_{1, y}, \ldots, \phi_{N,y}$ form a $T_r$-parametrization for some subset $C'_y \subseteq C_y$, containing all points of $C_y \cap (\tilde{k}[t])^2$.
\end{thm}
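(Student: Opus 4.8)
The plan is to reduce to the one-dimensional parametrization statement by cell decomposition, then use a power-substitution trick to upgrade $T_1$ to $T_r$. First I would apply the Lipschitz cell decomposition (Theorem \ref{thm:celldecomp.1.lipschitz}), relative to the parameter space $Y$, to partition $C$ into finitely many definable families of cells, each of which (after a coordinate permutation $\sigma_\ell$ that may depend on $\ell$ but not on $y$, since the partition is $\emptyset$-definable over the appropriate language) is a cell of type $(1,0)$ or $(0,0)$ over $Y$. Zero-dimensional cells contribute finitely many points and are handled trivially by constant maps; so the interesting case is a family of cells of type $(1,0)$, i.e. of the form $\{(x_1,x_2) : \rv(x_1 - c_{1,y}) \in R_{1,y},\ x_2 = c_{2,y}(x_1)\}$, with $c_{2,y}$ having $1$-Lipschitz components. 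Each twisted box of such a cell is an open ball $B$ in the $x_1$-coordinate, parametrized by $x_1 \mapsto (x_1, c_{2,y}(x_1))$; rescaling $B$ affinely to $\cO_K$ (which is innocuous since the $\rv$-data and hence the ball radii vary definably in $y$, and one can absorb the rescaling into the domain), the map $\psi_y \colon \cO_K \to C_y$ is $x \mapsto (\ell\text{-permuted})(x, c_{2,y}(\text{rescale}(x)))$. By the Jacobian property / Corollary \ref{cor:jac} and the $1$-Lipschitz property of the center, $\psi_y$ satisfies $T_1$-approximation after discarding finitely many residue-field fibres, which we fold into finitely many extra boxes; and the number of twisted boxes needed is uniformly bounded in $y$ by uniform finiteness (\cite[Lem.\,2.5.3]{CHR}), giving the claimed finite list of maps $\phi_{1,y},\dots,\phi_{N_1,y}$ that form a $T_1$-parametrization of $C_y$.

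Next I would upgrade from $T_1$ to $T_r$ by composing with the substitution $x \mapsto x^r$ on each domain copy of $\cO_K$. The point is that if $\psi \colon \cO_K \to \cO_K^2$ satisfies $T_1$-approximation and is moreover (after Taylor approximation, Theorem \ref{thm: taylor approximation}) differentiable with controlled higher derivatives on balls $1$-next to a finite set, then $\psi(x^r)$ has an $r$-th order Taylor polynomial matching to order $r$; this is exactly the argument in \cite[\S4.2]{CFL}, which I would cite and adapt. The subtlety is that $x \mapsto x^r$ is not surjective onto $\cO_K$: its image is the set of $r$-th powers. Here is where the hypothesis that the subgroup of $b$-th powers in $k^\times$ has finite index enters: choosing $r$ to be a suitable power of $b$ (or handling $r$ via factoring through $b$-th powers), the $r$-th powers in $\cO_K$, translated by finitely many coset representatives lifted from $k^\times$ and by the pseudo-uniformizer $t$ to reach small valuations, cover $\cO_K$ up to finitely many pieces — so finitely many translated-and-rescaled copies of $x \mapsto x^r$ have images covering $\cO_K$. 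Composing the $T_1$-parametrization maps with these finitely many substitutions yields a $T_r$-parametrization, still with a bound on the number of maps that is uniform in $y$.

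Finally I would assemble: the total collection over all cells $A_\ell$, all twisted boxes, all power-substitution pieces, and the finite exceptional sets, is a finite list $\phi_1,\dots,\phi_N \colon Y \times \cO_K \to C$, definable as a family, and for each $y$ the restrictions $\phi_{i,y}$ are surjective onto $C_y$ with each satisfying $T_r$-approximation, i.e. they form a $T_r$-parametrization of $C_y$; uniform finiteness guarantees $N$ does not depend on $y$. The main obstacle, I expect, is the bookkeeping in the power-substitution step: verifying both that finitely many translates of the $r$-th-power map cover $\cO_K$ (using the finite-index hypothesis on $b$-th powers and discreteness/density behaviour of the value group), and that $T_1$-approximation is genuinely promoted to $T_r$-approximation under $x \mapsto x^r$ uniformly in the parameter $y$ — this requires combining the Jacobian property with Taylor approximation of order $r$ and tracking the constants, which is where one must be careful that all the finitely-many-exceptions clauses can be absorbed into finitely many additional parametrizing maps without breaking uniformity in $y$.
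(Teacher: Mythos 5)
There is a genuine gap in your plan. You propose to parametrize the $(1,0)$-cell \emph{twisted box by twisted box}, rescaling each twisted box $B$ affinely onto $\cO_K$, and then claim that ``the number of twisted boxes needed is uniformly bounded in $y$ by uniform finiteness.'' This is false: a $1$-cell $P$ with $\rv$-datum $R\subseteq\RV^\times$ typically has infinitely many twisted boxes (e.g.\ $P=\cO_K\setminus\{0\}$ is a union of the annuli $|x|=|t|^m$, $m\geq 0$), and \cite[Lem.\,2.5.3]{CHR} only bounds the size of finite definable sets in families. So the approach of one affine chart per twisted box would yield infinitely many maps and cannot produce the finite list $\phi_1,\dots,\phi_N$ that the theorem requires.

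The missing insight --- and the actual crux of the paper's proof --- is that one does not parametrize each twisted box separately. After writing the cell as the graph of a $1$-Lipschitz $\phi\colon P\to\cO_K$ with $P$ a $1$-cell with center $0$, the paper covers $P$ by finitely many power-substitution maps $p_{i,j}\colon D_{i,j}\to P$, $y\mapsto a_it^jy^r$ (finitely many because $r$ is a power of $b$ and $(k^\times)^r$ has finite index in $k^\times$), and then proves that $\phi\circ p_{i,j}$ satisfies $T_r$-approximation on \emph{all} of $D_{i,j}$, not just on each ball of $D_{i,j}$. Lemma \ref{lem: T1 to Ts} handles the case $\rv(x)=\rv(y)$ (same ball), and the case $\rv(x)\neq\rv(y)$ is handled directly: by $1$-Lipschitzness $|\phi(a_it^jx^r)-\phi(a_it^jy^r)|\leq|x^r-y^r|\leq|x-y|^r$, and the derivative bounds $|\partial^\ell(\phi\circ p_{i,j})(y)|\leq|y|^{r-\ell}\leq|x-y|^{r-\ell}$ from Lemma \ref{lem: T1 to Ts} control the Taylor tail. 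This globalization across twisted boxes is what gives a genuinely finite $T_r$-parametrization; without it your construction collapses. (Also, you speak of first building a $T_1$-parametrization and then upgrading; the paper instead works directly with the $1$-Lipschitz centers from Theorem \ref{thm:celldecomp.1.lipschitz} and never isolates a standalone $T_1$-parametrization step, though that difference is cosmetic compared to the finiteness issue.)
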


To prove this theorem, we start with a $T_1$-parametrization of our curve, which exists because of Theorem \ref{thm:celldecomp.1.lipschitz}. To move from a $T_1$-parametrization to a $T_r$-parametrization for curves we will use power substitutions.

\begin{lem}\label{lem: T1 to Ts}
Let $X\subseteq K$ and let $f: X\to \cO_K$ be a $\emptyset$-definable $1$-Lipschitz function. Fix a positive integer $r$. Then there exists a finite $\emptyset$-definable set $C$ such that the following holds. Let $B$ be a ball $1$-next to $C$ contained in $\cO_K$, say $1$-next to $c\in C$. For $a,b\in \cO_K$ consider the map $p_r: K\to K: x\mapsto a(x-c)^r+b$. If $D$ is any open ball not containing $0$ with $p_r(D)\subseteq B$ then $f\circ p_r$ satisfies $T_r$-approximation on $D$ with respect to its Taylor polynomial. Moreover, for $y\in D$ and $j=1, ..., r$ there is the bound
\[
|\partial^j(f\circ p_r)(y)| \leq |y|^{r-j}.
\]
\end{lem}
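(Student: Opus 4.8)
We are given a $1$-Lipschitz $\emptyset$-definable function $f \colon X \to \cO_K$ with $X \subseteq K$. The plan is to choose $C$ to be the finite $\emptyset$-definable set provided by Taylor approximation of order $r-1$ applied to $f$ (Theorem~\ref{thm: taylor approximation}), possibly enlarged to also absorb the finitely many bad balls from Corollary~\ref{cor:jac} and to ensure $f$ is $r$-fold differentiable on each ball $1$-next to $C$. So fix such a $C$, let $B$ be a ball $1$-next to $c \in C$ contained in $\cO_K$, and fix $a, b \in \cO_K$, writing $p_r(x) = a(x-c)^r + b$. Let $D$ be an open ball not containing $0$ with $p_r(D) \subseteq B$.

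**The Taylor estimate.**

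First I would record that on $B$ Taylor approximation of order $r-1$ gives, for $x, x_0 \in B$,
\[
|f(x) - T^{\leq r-1}_{f,x_0}(x)| \leq |f^{(r)}(x_0)| \cdot |x - x_0|^r \leq |x - x_0|^r,
\]
where the last inequality uses $|f^{(r)}| \leq 1$ on $B$. This bound on $|f^{(r)}|$ follows because $f$ is $1$-Lipschitz: by the Jacobian property $|f'|$ is controlled, and iterating (or directly: a $1$-Lipschitz definable function has all higher derivatives of absolute value $\leq 1$ on the relevant balls, which one extracts from Theorem~\ref{thm: jacobian property}(2) applied inductively, or simply from $|f^{(r)}(x_0)| = |(f^{(r-1)})'(x_0)| \leq 1$ since each $f^{(j)}$ is again $1$-Lipschitz on $B$ by the Jacobian property). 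The point is that the error term in Taylor's theorem on $B$ is bounded by $|x-x_0|^r$ outright. The key composition step is then: for $y \in D$, set $x_0 = p_r(y) \in B$ and for $x \in D$ set $z = p_r(x) \in B$. Since $p_r$ is a polynomial with coefficients in $\cO_K$, the composite $T^{\leq r-1}_{f,x_0} \circ p_r$ is a polynomial in $x$ with coefficients in $\cO_K$; truncating it to degree $< r$ loses only terms of degree $\geq r$, hence contributes an error of size $\leq |x - y|^r$ (here one uses $|p_r(x) - p_r(y)| \leq |x-y| \cdot \max(|y|,|x|)^{r-1}$-type estimates, but more simply that the degree-$\geq r$ part of $T^{\leq r-1}_{f,x_0}(p_r(x))$ as a polynomial in $x - y$ has $\cO_K$-coefficients and hence is $\leq |x-y|^r$). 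Meanwhile $|f(p_r(x)) - T^{\leq r-1}_{f,x_0}(p_r(x))| \leq |p_r(x) - p_r(y)|^r$, and $|p_r(x) - p_r(y)| = |a| \cdot |(x-c)^r - (y-c)^r| \leq |x-y|$ because $a \in \cO_K$ and $|(x-c)^r - (y-c)^r| \leq |x-y| \cdot \max(|x-c|,|y-c|)^{r-1} \leq |x-y|$ as $x,y,c \in \cO_K$. Combining, the truncation to degree $<r$ of $T^{\leq r-1}_{f,x_0} \circ p_r$ is the required Taylor polynomial witnessing $T_r$-approximation of $f \circ p_r$ on $D$, and since its coefficients lie in $\cO_K$ (being $\cO_K$-combinations of the $\cO_K$-valued $f^{(i)}(x_0)/i!$ — note $f^{(i)}(x_0)/i!$ need not itself be in $\cO_K$, so some care is needed here) this matches Definition~\ref{def: T_r-approximation}.

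**The derivative bound and the main obstacle.**

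For the final estimate $|\partial^j(f \circ p_r)(y)| \leq |y|^{r-j}$, I would use the chain rule and Fa\`a di Bruno: $\partial^j(f \circ p_r)(y)$ is a sum of terms $f^{(\ell)}(p_r(y)) \cdot \prod p_r^{(m_i)}(y)$ with $\sum m_i = j$ and $\ell$ the number of factors. Since $f$ is $1$-Lipschitz, $|f^{(\ell)}(p_r(y))| \leq 1$; and $p_r^{(m)}(y) = a \cdot \frac{r!}{(r-m)!}(y-c)^{r-m}$ has absolute value $\leq |y - c|^{r-m} \leq 1$ — but I need $|y|^{r-j}$, not $1$. Here is where $0 \notin D$ matters: if $D \subseteq \cO_K$ then $|y| \leq 1$, so $|y|^{r-j} < 1$ is the stronger bound and I must work harder. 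The cleanest route: since $0 \notin D$, the function $y \mapsto |y|$ is constant on $D$, say equal to $|y_0|$; then $|y - c| \leq \max(|y|, |c|)$, and when $|c| \geq |y|$ the ball $B$ would be $1$-next to $c$ with all its elements of absolute value $|c|$... Actually the honest main obstacle is bookkeeping the powers of $|y|$: one shows $|p_r^{(m)}(y)| \leq |y|^{r-m}$ when $|y-c|$ and $|y|$ compare favorably, which holds because $D$ not containing $0$ forces $|y - c|$ to be either $|c|$ (if $|c| > |y|$, excluded by a radius argument since $p_r(D)\subseteq B$ and $B$ is $1$-next to $c$) or $\leq |y|$. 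I expect this comparison of $|y|$ versus $|y - c|$, using that $B$ is $1$-next to $c$ and $p_r(D)\subseteq B$, to be the one genuinely fiddly point; everything else is the ultrametric triangle inequality plus the Taylor and Jacobian theorems already available. Once $|p_r^{(m)}(y)| \leq |y|^{r-m}$ is established, multiplying the Fa\`a di Bruno factors gives $\prod |p_r^{(m_i)}(y)| \leq |y|^{\sum(r - m_i)} = |y|^{\ell r - j} \leq |y|^{r-j}$ since $\ell \geq 1$ and $|y| \leq 1$, completing the proof.
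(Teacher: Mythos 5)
The overall strategy — pick $C$ from Taylor approximation and the Jacobian property, then analyze $f\circ p_r$ directly — matches the paper's. The main gap is that your argument rests on a false intermediate claim, namely that $|f^{(i)}| \leq 1$ on $B$ for all $i \leq r$, which you justify by asserting that each $f^{(j)}$ is ``again $1$-Lipschitz on $B$ by the Jacobian property''. This does not follow: the Jacobian property tells you that $|f^{(j+1)}|$ is constant on each ball $1$-next to $C$, not that it is bounded by $1$. A $1$-Lipschitz definable function on a small ball $B$ can have $|f^{(i)}|$ much larger than $1$ for $i\geq 2$; for instance $f(x) = t^{-1}x^2$ is $1$-Lipschitz on $\{x : |x| = |t|^2\}$ while $|f''| = |t|^{-1}$. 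The correct estimate, which the paper extracts from the Jacobian property by tracking how the radii of the balls $f^{(i)}(B)$ evolve (inequality~(\ref{eq:bound.derivative})), is $|f^{(i)}(x_0^r)|\,|x_0|^{r(i-1)} \leq 1$, i.e.\ $|f^{(i)}|$ is only bounded by $(\radop B)^{-(i-1)}$. You flag the issue yourself (``note $f^{(i)}(x_0)/i!$ need not itself be in $\cO_K$, so some care is needed'') but then do not resolve it; the resolution is precisely this weaker, correct bound.

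What makes the lemma true is that the power substitution $p_r(x) = x^r$ compensates for the blow-up of $|f^{(\ell)}|$: in your Fa\`a di Bruno expansion the factors $|p_r^{(m_i)}(y)| = |y|^{r - m_i}$ contribute a total extra power $|y|^{\ell r}$ (with $\ell$ the number of chain-rule factors), exactly cancelling $|f^{(\ell)}(y^r)| \leq |y^r|^{-(\ell - 1)}$ and yielding $|\partial^j(f\circ p_r)(y)| \leq |y|^{r-j}$. Because your incorrect bound is \emph{stronger} than the truth, all your subsequent inequalities still formally go through, so the error is invisible in the final answer; nevertheless the proof as written is incorrect and misses the key mechanism of the argument. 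Two smaller points: the $|y|$-versus-$|y-c|$ bookkeeping you flag as fiddly disappears once one reduces without loss of generality to $c=0$, $a=1$, $b=0$, as the paper does at the outset; and rather than carrying out the composition by hand, the paper establishes~(\ref{eq:bound.derivative}) and then invokes~\cite[Lem.\,3.2.7]{CCL-PW}, which packages the composition of Taylor-approximating maps given exactly this kind of derivative bound.
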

\begin{proof}
Use Theorem \ref{thm: jacobian property} and Theorem \ref{thm: taylor approximation} to find a finite $\emptyset$-definable set $C$ such that $f$ satisfies Taylor approximation up to order $r$ on balls $1$-next to $C$ and such that the first $r$ derivatives of $f$ satisfy the Jacobian property on balls $1$-next to $C$. Without loss of generality, let $c=0$, $a=1$ and $b=0$ and fix $x_0\in D$. Then $x_0^r\in B$ and $\radop B = |x_0|^r$ since $B$ is $1$-next to $0$. The fact that $f$ is $1$-Lipschitz gives that
\[
|f'(x_0^r)|\leq 1. 
\]
By the Jacobian property, the first $r$ derivatives of $f$ all have a constant norm on $B$. Thus for $i\leq r$,
\[
f^{(i)}(B) \subseteq \{y\in K\mid |y| = |f^{(i)}(x_0^r)|\},
\]
and hence $\radop f^{(i)}(B)\leq |f^{(i)}(x_0^r)|$. On the other hand, the Jacobian property yields that
\[
|f^{(i)}(x_0^r)| = \frac{\radop f^{(i-1)}(B)}{\radop B} \leq \frac{|f^{(i-1)}(x_0^r)|}{|x_0^r|}.
\]
Using induction and the fact that $f$ is $1$-Lipschitz this gives for $1\leq i\leq r$ that
\begin{equation}\label{eq:bound.derivative}
|f^{(i)}(x_0^r)x_0^{r(i-1)}| \leq 1.
\end{equation}
Let $x\in D$. Then $x^r$ is in the same ball $1$-next to $0$ as $x_0^r$ and so $|x^r|=|x_0^r|$. Since both $f$ and $p_r$ have Taylor approximation up to order $r$ on their respective domains $B$ and $D$ we can conclude by \cite[Lem.\,3.2.7]{CCL-PW}.
\end{proof}

\begin{proof}[Proof of Theorem \ref{thm:Tr.param.curves}]
By enlarging $r$ if necessary, we may assume that $r$ is a power of $b$. Using the cell decomposition theorem \ref{thm:celldecomp.1.lipschitz} uniformly in $y\in Y$ we obtain for every $y\in Y$ finitely many sets $P_{i,y}$ whose union is $C_y$ such that every $P_{i,y}$ is, after a coordinate permutation, a $(1,0)$-cell or a $(0, 0)$-cell with $1$-Lipschitz centers. Since everything below works uniformly in $y$, we drop the subscript $y$ from now on. 

The $(0,0)$-cells are just singletons, so let us focus on one of the $(1,0)$-cells, say $P_\ell$. After a coordinate permutation we may assume that $P_\ell$ is the graph of a $1$-Lipschitz map
\[
\phi: P\subset \cO_K \to \cO_K,
\]
where $P$ is a cell with center $c \in K$. 
Since $P \subseteq \cO_K$, we may assume that $c \in \cO_K$. 
Additionally, we may assume that $\phi$ satisfies the conclusion of Lemma \ref{lem: T1 to Ts}.

Now suppose there exists some $p_0(t) \in \tilde{k}[t]$ such that $\abs{p_0(t) - c}$ is not an integer power of $\abs{t}$.
Since translation by $p_0(t)$ preserves $\tilde{k}[t]$, we may reduce to the case where $p_0(t) = 0$. In particular, for every rational point $p(t)$ it holds that $\abs{p(t)} \neq \abs{c}$.
Thus $P \cap \tilde{k}[t]$ is contained inside a cell $P' \subseteq P$ with center $0$.
Since we are only interested in the part of $C$ above $P'$, we may reduce to the case where $\abs{p(t) - c} \in t^{\ZZ}$ for all $p(t) \in \tilde{k}[t]$.

The group of $r$-th powers in $k^\times$ has finite index in $k^\times$, since $r$ is a power of $b$. Let $a_1, ..., a_m\in \cO_K^\times$ reduce to representatives for the cosets of $(k^\times)^r$ in $k^\times$. 
For $i=1, ..., m$ and $j=0, ..., r-1$ let
\[
D_{i,j} = \{y\in K\mid c +  a_i t^j y^r \in P\}.
\]
We claim that the finitely many maps
\[
p_{i,j}: D_{i,j}\to P : y \mapsto c + a_i t^j y^r
\]
cover $P \cap \tilde{k}[t]$. Indeed, by our previous reduction $\abs{p(t) -c} \in t^{\ZZ} $, and so we may write $p(t) = c + t^{n} \alpha$ for some $n \in \NN$ and $\alpha \in \cO_K^{\times}$. 

By Lemma \ref{lem: T1 to Ts} the $\phi\circ p_{i,j}$ are all $T_r$ on open balls contained in $D_{i,j}$. We prove that actually $\phi\circ p_{i,j}$ even has $T_r$-approximation on all of $D_{i,j}$. 
For convenience of notation, we assume $c =0$. The general case is similar.
So let $x,y\in D_{i,j}$. Since $D_{i,j}$ is a cell with center $0$, if $\rv(x) = \rv(y)$ then $x$ and $y$ are in the same ball contained in $D_{i,j}$ and we are done. So assume that $\rv(x)\neq \rv(y)$. Then
\begin{align*}
	\qquad|\phi(a_it^jx^r) - T_{\phi\circ p_{i,j}, y}^{<r}(x)| \leq \max\{|\phi(a_it^jx^r) - \phi(a_it^jy^r)|, |\phi(a_it^jy^r) - T_{\phi\circ p_{i,j}, y}^{<r}(x)|\}.
\end{align*}
For the first term, use that $\phi$ is $1$-Lipschitz to obtain
\[
|\phi(a_it^jx^r) - \phi(a_it^jy^r)| \leq |x^r - y^r| \leq \max\{|x|^r, |y|^r\} = |x-y|^r,
\]
since $\rv(x)\neq \rv(y)$. For the second term, we use the bound provided by Lemma \ref{lem: T1 to Ts} to get that
\begin{align*}
	|\phi(a_it^jy^r) - T_{\phi\circ p_{i,j}, y}^{<r}(x)| &\leq \max_{\ell=1, ..., r-1} \left|\frac{\partial^\ell(\phi\circ p_{i,j})(y)}{\ell!}(x-y)^\ell\right| \\
	&\leq \max_{\ell=1, ..., r-1}|y|^{r-\ell}|x-y|^\ell \leq |x-y|^r.
\end{align*}
So $\phi\circ p_{i,j}$ satisfies $T_r$-approximation on all of $D_{i,j}$. In conclusion, the maps 
\[
\psi_{ij}: D_{ij}\to C: y\mapsto (p_{ij}(y), \phi(p_{ij}(y))
\]
all have $T_r$-approximation. Their images cover $P_\ell \cap (\tilde{k}[t])^2$, since the images of the $p_{ij}$ cover $P \cap \tilde{k}[t]$.
\end{proof}

\subsection{Transcendental curves}

The following lemma is an adapted version of \cite[Lem.\,5.1.3]{CFL}. We use it to capture rational points of bounded height in a small ball in a single hypersurface.

\begin{lem}
\label{lem: Lemma 5.1.3 revisited}
Fix integers $d, m, n$ with $m<n$ and consider $r,V, e$ as defined in equation \ref{eq: def mu r V e}. Let $s$ be a positive integer, let $U\subseteq \cO_K^m$ and suppose $\psi=(\psi_1,...,  \psi_n): U\to \cO_K^n$ satisfies $T_r$-approximation. For $\alpha>\frac{sV}{e}$ a positive integer, denote by $p: \cO_K^m\to (\cO_K/(t^\alpha))^m$ the projection map. Then for any fibre $B$ of $p$, the image $\psi(B\cap U)_s$ is contained in an algebraic hypersurface of degree at most $d$. Moreover, $V/e$ goes to $0$ as $d$ goes to infinity.
\end{lem}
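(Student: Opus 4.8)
The plan is to run the Bombieri--Pila determinant method, using the estimate from Lemma \ref{lem:determinant.estimate} as the engine. Set $\mu = \mu(n,d) = D_n(d)$, the number of monomials of degree at most $d$ in $n$ variables, and recall that $r = r(m,d)$ is chosen precisely so that $D_m(r-1) \le \mu < D_m(r)$. Suppose for contradiction that $\psi(B \cap U)_s$ is \emph{not} contained in any hypersurface of degree $\le d$. Then we can pick $\mu$ points $P_1, \ldots, P_\mu \in \psi(B \cap U)_s$ that are in \emph{general position} with respect to degree $d$, i.e.\ the $\mu \times \mu$ matrix $M = (m_\iota(P_j))_{\iota, j}$ is invertible, where $m_1, \ldots, m_\mu$ enumerate the monomials of degree $\le d$. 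Write each $P_j = \psi(x_j)$ with $x_j \in B \cap U$, and let $\Psi_\iota = m_\iota \circ \psi \colon U \to \cO_K$ for $\iota = 1, \ldots, \mu$; so $M = (\Psi_\iota(x_j))_{\iota,j}$ and $\det M \ne 0$.

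The two halves of the argument are an upper bound and a lower bound on $|\det M|$. For the \textbf{upper bound}: first I would check that each composition $\Psi_\iota = m_\iota \circ \psi$ again satisfies $T_r$-approximation on $U$ (the $T_r$-approximating polynomials of $\psi$, plugged into the monomial $m_\iota$ and truncated to degree $< r$, work, since all coordinates lie in $\cO_K$ and the error estimate $|x-y|^r$ is preserved under such algebraic manipulations — this is the kind of routine bookkeeping done in \cite{CCL-PW, CFL}). The fibre $B$ of $p \colon \cO_K^m \to (\cO_K/(t^\alpha))^m$ is a product of $m$ closed balls of radius $|t|^\alpha$, so $B \cap U$ sits inside a product of $m$ closed balls of radius $|t|^\alpha$. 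Lemma \ref{lem:determinant.estimate} then applies with $\rho = \alpha$ and gives
\[
|\det M| \le |t|^{\alpha e},
\]
where $e = e(n,m,d)$. For the \textbf{lower bound}: the entries $\Psi_\iota(x_j) = m_\iota(\psi(x_j))$ are values of degree-$\le d$ monomials at points of $\tilde k[t]_s^{\,n}$, hence lie in $\tilde k[t]$ and have degree $< sd$ in $t$; actually, more carefully, each coordinate of $\psi(x_j) \in (\tilde k[t]_s)^n$ has $t$-degree $\le s-1$, so $m_\iota(\psi(x_j))$ has $t$-degree $\le d(s-1) < V/L_n(0) \cdot \ldots$ — the precise bound I want is that $\det M$, being a polynomial combination of these entries of total degree $\le V = V(n,d) = \sum_{k=0}^d k L_n(k)$ in $t$ (summing the monomial degrees down a generalized diagonal, weighted correctly), is an element of $\tilde k[t]$ of degree $< sV$. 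Since $\det M \ne 0$ and $\det M \in \tilde k[t]$, either $\det M \in \tilde k^\times$, giving $|\det M| = 1$, or $\det M$ has positive $t$-degree $< sV$, giving $|\det M| \ge |t|^{sV - 1} > |t|^{sV}$. In either case $|\det M| > |t|^{sV}$.

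Combining the two bounds forces $|t|^{\alpha e} \ge |\det M| > |t|^{sV}$, hence $\alpha e < sV$, i.e.\ $\alpha < sV/e$, contradicting the hypothesis $\alpha > sV/e$. Therefore $\psi(B \cap U)_s$ lies in a hypersurface of degree $\le d$, as claimed. For the final assertion, that $V/e \to 0$ as $d \to \infty$: this is a purely combinatorial estimate on the quantities in \eqref{eq: def mu r V e}, identical to the one in \cite[\S 5]{CFL} — roughly, $V(n,d) \sim c_n d^{n+1}$ while $e(n,m,d) \sim c_m' \mu^{1 + 1/m} \sim c_m'' d^{n(1+1/m)}$ with $n(1+1/m) = n + n/m > n+1$ precisely because $m < n$, so the ratio $V/e$ decays like a negative power of $d$; I would isolate this as a separate combinatorial sub-lemma or simply cite the corresponding computation in \cite{CFL}.

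\medskip
The \textbf{main obstacle} is getting the degree-in-$t$ bound on $\det M$ exactly right so that it matches the exponent $e$ coming out of Lemma \ref{lem:determinant.estimate} in the right way — that is, making sure the constant $V = V(n,d)$ is the correct sharp bound on the $t$-degree of $\det M$ (one must expand the determinant, track how the monomial degrees of the entries contribute, and see that the total is bounded by $s\sum_{k=0}^d k L_n(k)$), and that this is genuinely compatible with the asymptotic $V/e \to 0$. Everything else — verifying $T_r$-approximation passes to the monomial compositions, and that $B \cap U$ has the ball structure required by Lemma \ref{lem:determinant.estimate} — is routine adaptation of \cite{CCL-PW, CFL}.
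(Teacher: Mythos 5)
Your proposal follows the same determinant-method argument as the paper: take $\mu = D_n(d)$ points $\psi(x_j) \in \psi(B\cap U)_s$, form the determinant $\Delta = \det(m_\iota(\psi(x_j)))_{\iota,j}$ over all monomials $m_\iota$ of degree $\le d$, bound $|\Delta|$ from above by $|t|^{\alpha e}$ via Lemma~\ref{lem:determinant.estimate}, and observe that a nonzero $\Delta$ would lie in $\tilde{k}[t]$ with $\ord_t(\Delta) \le (s-1)V < sV$, forcing $\Delta = 0$ when $\alpha > sV/e$; linear dependence of the monomial vectors then yields the hypersurface. The only cosmetic differences are that you phrase the argument as a contradiction (picking points in ``general position'') while the paper runs it directly, and that you flag explicitly the routine check that $m_\iota \circ \psi$ inherits $T_r$-approximation, which the paper leaves implicit; the asymptotic $V/e \to 0$ you sketch by the same combinatorics the paper cites from \cite{P1}.
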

\begin{proof}
Let $B\subseteq \cO_K^m$ be a product of closed balls of radius $|t|^\alpha$, i.e.\ a fibre of the map $p$, and take points $P_1, ..., P_\mu$ in $\psi(B\cap U)_s$. Take $x_i\in B\cap U$ such that $\psi(x_i)=P_i$. Consider the determinant 
\[
\Delta = \det ((\psi(x_i))^j)_{1\leq i\leq \mu, j\in \Delta_n(d)}.
\] 
For $j\in \Delta_n(d)$, the notation $(y_1, ..., y_n)^j$ is to be interpreted as $\prod_i y_i^{j_i}$. Since $\psi$ satisfies $T_r$-approximation, Lemma \ref{lem:determinant.estimate} gives that $\ord_t(\Delta)\geq \alpha e$. Since the $P_i$ are in $\tilde{k}[t]^n_s$, if $\Delta$ were non-zero then $\ord_t (\Delta) \leq sV$. But $\alpha > \frac{sV}{e}$ so that $\Delta=0$. 

Now we use the determinant method. Since $\Delta=0$, the $\mu$ vectors $((\psi(x_i))^j)_j$ are linearly dependent. This implies that there exists some algebraic hypersurface of degree at most $d$ passing through all of the points $\psi(x_i)$. Since this holds for any $\mu$ points in $\psi(B\cap U)_s$ we can find such a hypersurface containing all of $\psi(B\cap U)_s$. The last fact follows from an easy explicit calculation, see e.g.\ \cite[p.\ 212]{P1}.
\end{proof}

We need one more projection lemma to reduce to the planar case. Let us call a set in $K^2$ \emph{non-algebraic up to degree $d$} if it has a finite intersection with every algebraic curve of degree at most $d$.

\begin{lem}\label{lem:projection}
Let $C\subset K^n$ be a definable transcendental curve and fix a positive integer $d$. Then there exists a finite definable partition of $C$ into sets $C_i$, together with coordinate projections $\pi_i: C_i\to K^2$ such that $\pi_i|_{C_i}$ is a bijection onto its image, and $\pi_i(C_i)$ is non-algebraic up to degree $d$.
\end{lem}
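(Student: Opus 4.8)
The plan is to reduce a transcendental curve in $K^n$ to finitely many planar pieces that remain ``transcendental enough'' — namely non-algebraic up to degree $d$ — by choosing, on each piece of a suitable cell decomposition, a coordinate projection that is both injective and that does not collapse the piece into a low-degree algebraic curve. First I would apply Theorem \ref{thm:celldecomp.1.lipschitz} (together with dimension theory from \cite[Thm.\,5.3.4]{CHR}) to partition $C$ into finitely many cells $A_\ell$, each of which, after a coordinate permutation $\sigma_\ell$, is a $(1,0,\dots,0)$-cell; such a cell is the graph of a definable function $\cO_K \supseteq P_\ell \to \cO_K^{n-1}$ over a one-dimensional base $P_\ell$. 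In particular, the projection $\sigma_\ell$ followed by $\pi_{\leq 1}$ (the first coordinate) is already a definable bijection $A_\ell \to P_\ell$; composing with the graph map of any single further coordinate $x_j$ gives, for each $j \in \{2,\dots,n\}$, a candidate coordinate projection $\pi_{\ell,j} \colon A_\ell \to K^2$ which is automatically injective, being a restriction of the already-injective full parametrization.

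The remaining task on each $A_\ell$ is to select a $j$ for which $\pi_{\ell,j}(A_\ell)$ is non-algebraic up to degree $d$. Suppose, for contradiction, that for \emph{every} $j \in \{2,\dots,n\}$ the image $\pi_{\ell,j}(A_\ell)$ has infinite intersection with some algebraic curve $D_j$ of degree $\leq d$. Since $A_\ell$ is one-dimensional and definable, by \cite[Lem.\,2.5.2]{CHR} (the fact that a one-dimensional definable set meeting $D_j$ infinitely must agree with the graph over a ball) there is a ball $B_j \subseteq P_\ell$ on which the $j$-th coordinate function of $A_\ell$ is an algebraic function of the first coordinate, cut out by $D_j$. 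Intersecting the finitely many balls $B_2,\dots,B_n$ yields a single ball $B \subseteq P_\ell$ over which \emph{all} coordinates of $A_\ell$ are simultaneously algebraic functions of the first coordinate of bounded degree; hence $A_\ell \cap (B \times \cO_K^{n-1})$ lies on a single algebraic curve in $K^n$, and this intersection is infinite. But this contradicts the hypothesis that $C$ (hence $A_\ell$) is transcendental, i.e.\ meets every algebraic curve in only finitely many points. Therefore some $j = j(\ell)$ works, and we set $C_\ell := A_\ell$ with $\pi_\ell := \pi_{\ell, j(\ell)}$ (pre-composed with $\sigma_\ell$).

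The main obstacle I anticipate is the bookkeeping in the contradiction step: one must be careful that ``algebraic function of the first coordinate of degree $\leq d$'' genuinely forces membership in an algebraic curve in $K^n$ of bounded degree — this is the standard fact that a tuple of functions each satisfying a polynomial relation of degree $\leq d$ with a common variable determines a curve whose degree is bounded in terms of $d$ and $n$ — and that the finitely-many-balls intersection argument does not degenerate (it does not, since a finite intersection of balls is again a ball or empty, and the empty case is impossible as each $B_j$ is nonempty and they are all sub-balls of the same one-dimensional cell $P_\ell$, which is itself a ball up to finitely many removed points; one passes to a common sub-ball). Everything else — injectivity of $\pi_\ell$, definability, finiteness of the partition — is immediate from the cell decomposition and the graph structure. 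Finally, discarding the finitely many $(0,\dots,0)$-cells (singletons) from the partition, or absorbing them into any adjacent piece, causes no difficulty since they are trivially non-algebraic up to degree $d$ after any projection.
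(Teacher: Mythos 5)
Your overall strategy --- cell-decompose $C$ into graph-cells $A_\ell$ over one-dimensional bases $P_\ell$ and then pick a good two-coordinate projection on each --- matches the paper's, but the claim at the heart of your contradiction step is false, and the justification you offer for it does not hold. You assert that some \emph{single} $j$ must make $\pi_{\ell,j}(A_\ell)$ non-algebraic up to degree $d$, and to reach a contradiction you intersect the balls $B_2,\dots,B_n$ on which each coordinate function is locally algebraic. But in a valued field two balls are either nested or disjoint, and being sub-balls of the common base $P_\ell$ gives them no reason whatsoever to meet: your aside that ``the empty case is impossible'' is simply not correct. Concretely, if $P_\ell$ is a disjoint union of two balls $B \sqcup B'$, and over $B$ the second coordinate function is a polynomial of low degree while the third is transcendental, with the roles of the second and third coordinates reversed over $B'$, then no single $j$ makes $\pi_{\ell,j}(A_\ell)$ non-algebraic up to degree $d$, even though $A_\ell$ can perfectly well be transcendental as a subset of $K^n$. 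So this is not a repairable bookkeeping issue in the intersection step; the intermediate claim itself fails.

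The fix, and what the paper's proof actually does, is to refine the partition \emph{within each cell} rather than look for one globally good projection per cell. Writing the cell as the graph of $\phi = (\phi_1,\dots,\phi_{n-1})$ over a one-dimensional base $P$, one introduces for each $j$ the definable set $A_j \subseteq P$ of base points $x$ around which the graph of $\phi_j$ is locally non-algebraic up to degree $d$. Transcendence of $C$ together with $1$-h-minimality (the uniform-finiteness / preparation lemma you cite) implies that the $A_j$ cover $P$; one then further partitions the cell according to the $A_j$ and uses the $(1,j)$-projection on the piece lying over $A_j$. In other words, the index $j$ of the good coordinate must be allowed to vary over sub-pieces of a single cell, and that extra layer of partitioning is exactly the step missing from your argument.
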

\begin{proof}
By the cell decomposition theorem \ref{thm:celldecomp.1.lipschitz}, we may partition $C$ into finitely many definable sets $C_i$ such that after a coordinate permutation, $C_i$ is a $(1, 0, ..., 0)$ cell with $1$-Lipschitz centers. (We may disregard the finitely many $(0, ..., 0)$-cells since there are just points.) In other words, $C_i$ is the graph of a map
\[
\phi: P\to K^{n-1}: x\mapsto (\phi_1(x), ..., \phi_{n-1}(x)),
\]
where all $\phi_j$ are $1$-Lipschitz, and $P\subset K$ is a $1$-cell. Denote by $A_j\subset P$ the set of $x\in P$ such that in some neighbourhood of $x$, the graph of $\phi_j$ is non-algebraic up to degree $d$. Then the $A_j$ are definable sets, and they cover $P$, since else $\phi$ would not be transcendental. Thus we can further partition $C_i$ into the graphs of $\phi$ over every $A_j$. Over $A_j$, projection onto the first and $j$-th coordinate gives the desired conclusion.
\end{proof}

\begin{remark}
The use of this lemma can be avoided by working with cylinders over $X$ for each possible projection $K^n\to K^2$, see \cite[p.\,45]{CCL-PW}.
\end{remark}

With this, we can prove our main result. Recall that the strategy of the proof is as follows. Using $T_r$-parametrizations, we represent $C$ as a finite union of graphs of functions satisfying $T_r$-approximation. Then Lemma \ref{lem: Lemma 5.1.3 revisited} gives a hypersurface catching all rational points on $C$ of height at most $s$. The fact that $C$ is transcendental then gives the desired conclusion.

\begin{proof}[Proof of Theorem \ref{thm:CC-c-dim}]
Recall the definition of $V, e$ and $r$ from equation~\ref{eq: def mu r V e} and recall that $V/e$ goes to zero as $d$ goes to infinity, by Lemma~\ref{lem: Lemma 5.1.3 revisited}. Take $d$ such that
\[
\frac{V}{e} < \varepsilon.
\]
Up to enlarging $r$ if necessary, we may assume that $r$ is a power of $b$. By applying Lemma \ref{lem:projection} we may assume without loss of generality that $C$ is a planar curve in $\cO_K^2$ which is non-algebraic up to degree $d$.

By Theorem \ref{thm:Tr.param.curves} there exist finitely many maps $\psi_i: U_i\subset \cO_K\to C$ which together form a $T_r$-parametrization of a subset $C' \subset C$ which already contains all rational points of $C$. Let us focus on one such $\psi_i$. By our construction, we may apply Lemma \ref{lem: Lemma 5.1.3 revisited} with $\alpha = \lceil s\varepsilon \rceil$. This yields that $\psi_{i}(U_i\cap B)_s$ is contained in an algebraic hypersurface $X$ of degree at most $d$, for any closed ball $B\subset \cO_K$ of radius $|t|^\alpha$. Since $\psi_{i}(U_i)$ is contained in $C$, and since $C$ is non-algebraic up to degree $d$, this intersection $X\cap C$ is finite. Even more, by uniform finiteness in definable families~\cite[Lem.\,2.5.2]{CHR}, the intersection of $X$ with $C$ is uniformly bounded (over all such $X$) by some integer $N$. Thus the counting dimension of $C$ is bounded by 
\[
(NN', 1, \lceil \varepsilon s \rceil),
\]
where $N'$ is the total number of sets $U_i$ required in the cell decomposition for $C$.

Finally, to prove a uniform upper bound on counting dimension in definable families, assume that $(C_y)_y$ is a definable family of transcendental curves, for $y\in Y\subseteq K^m$ with $Y$ definable. Then the above proof can easily be made uniform in $C_y$. Indeed, by Lemma \ref{lem:projection} we can assume that every $C_y$ is a planar curve which is non-algebraic up to degree $d$. The number of maps for a $T_r$-parametrization can be uniformly bounded in $y$ since Theorem \ref{thm:Tr.param.curves} is uniform in families. Similarly, the intersection of an algebraic curve of degree at most $d$ with any $C_y$ is finite, and thus uniformly bounded by \cite[Lem.\,2.5.2]{CHR}. These two facts give the desired conclusion. 	 
\end{proof}
\subsection{Linear upper bounds are optimal} 

%We construct a curve $C$, definable in a $1$-h-minimal field $K$ such that its counting dimension is not bounded by $(N(s),1,e(s))$.
%
%Recall that each characteristic zero valued field $K$ can be equipped with an $\omega$-h-minimal analytic structure, such that each convergent power series $f \in \cO_K \langle x \rangle$, viewed as a function $f \colon \cO_K \to K$, is definable (\cite[Thm.\,6.2.1]{CHR}).
%Here we, take an arbitrary field $k$ of characteristic zero and let $K = k((t))$, with valuation ring $\cO_K = k[[t]]$.
%Given a sublinear function $e \colon \NN \to \NN$, and any $N \colon \NN \to \NN$, we construct a strictly convergent power series $f \in  \cO_K\langle 
%x \rangle$ such that counting dimension of the graph $C = \{ (x,f(x)) \mid x \in \cO_K \}$ is not bounded by $(N(s),1,e(s))$. 
We now prove Theorem \ref{thm:optimal}. This shows that the bound in Theorem \ref{thm:CC-c-dim} is optimal, in the sense that one cannot replace the last component $\lceil \varepsilon \cdot s \rceil$ by a sublinear function $e(s)$, even if we allow $N(s)$ to be completely arbitrary. 
%More precisely, for each sublinear $e(s)$ and any $N(s)$, there exist curves, definable in some $1$-h-minimal structure whose counting dimension cannot be bounded by $(N(s),1,e(s))$. Our construction expands on \cite[Prop.\,1]{BCN}.

We first recall the notion of rings of strictly convergent power series $\cO_K\langle x_1, \dots, x_n \rangle$. By definition, their elements are those power series $\sum_{i \in \NN} a_i x^i$ with coefficients in $\cO_K$ such that $a_i \to 0$, when $\abs{i} \to \infty$. Each $f \in \cO_K \langle x_1, \dots, x_n\rangle$ can be naturally considered as a function $\cO_K^n \to K$ and then also as a function $f \colon K^n \to K$, after extending by zero. By \cite[Thm.\,6.2.1]{CHR} there exists a $1$-h-minimal structure on $K$ in which these functions are definable. Moreover, by \cite[Prop.\,4.3.3]{CHR}, there exists such a structure in which $\acl = \dcl$.

\begin{proof}[Proof of Theorem \ref{thm:optimal}]
We work in the structure on $K \coloneqq k((t))$ as outlined above, in which all functions $\cO_K^n\to \cO_K$ defined by a strictly convergent power series are definable. 

First fix any strictly increasing continuous function $\delta \colon \RR_{\geq 0} \to \RR_{>0}$ such that $e(s) \leq \delta(s)$ for all $s \in \NN$, with $\lim_{s \to \infty} \delta(s) = + \infty$ and $\lim_{s \to +\infty } \delta(s)/s = 0$.
Then choose any strictly increasing function $F \colon \RR_{\geq0} \to \RR_{>0}$ with $F(\NN) \subseteq \NN$ and such that for all $s \in \NN$
\[
F(\delta(s)) > N(s).
\]
Next, take any strictly increasing sequence of natural numbers $(N_n)_n$, with the property that for all $n \in \NN$
\[
3 N_{n-1}^2 F(N_{n-1}) < \frac{\delta^{-1}(N_n) - 1}{N_n}.
\]
Such a sequence exists since $\lim_{u\to \infty} \delta^{-1}(u)/u = \infty$ as $\delta$ is sublinear.
From these data, we construct $f \in \cO_K\langle x \rangle$ as
\[
f(x) = \sum_{n = 0}^{\infty} t^{N_n} x^{N_n} \prod_{i,\ell = 1}^{N_n} \prod_{j = 1}^{F(N_n)} (x - i - j t^\ell). 
\]
Letting the sequence $(N_n)_n$ grow even faster, if necessary, we may assume that for each $d \in \NN$ there is some $n_d$ such that for all $n \geq n_d$ we have $N_{n} > d(N_{n-1} + N_{n-1}^2 F(N_{n-1}) )$. Hence, for $M = N_{n-1} + N_{n-1}^2 F(N_{n-1})$,  the order of contact between $f$ and its $M$-th order Taylor approximation exceeds $d  M$. 
B\'ezout's theorem thus implies that $f$ cannot be algebraic of any degree $d \in \NN$ (as in \cite[Prop.\,1]{BCN}).
Hence the graph $C$ of $f \colon \cO_K \to K$ is a definable transcendental curve in $K^2$. 

We now show that the counting dimension of $C$ is not bounded by $(N, 1, e)$. So let $g: K^2\to \cO_K$ be any definable map. We will show that the composition
\[
C_s  \xrightarrow{g} \cO_K \xrightarrow{\operatorname{proj}} \frac{\cO_K}{ (t^{e(s)}) }
\]
has a fibre of size strictly larger than $N(s)$ for some sufficiently large $s$.

Take any $n$ and let $s$ be such that
\[ 
\delta(s) \leq N_n < \delta(s+1).
\]
For each $i,j,\ell \in \NN$, we have by construction that $f(i + jt^\ell) \in k[t]$. By our choice of $s$, we moreover have that $t$-degree of $f(N_n + j t^{N_n})$ is strictly smaller than $s$, when $1 \leq j \leq F(N_n)$.
Indeed, it follows from the construction of $(N_n)_n$ that
\begin{align*}
	\deg_t(f(N_n + j t^{N_n})) & \leq N_{n-1} +  N_n N_{n -1} + N_n N_{n-1}^2 F(N_{n -1})  \\
	&\leq 3 N_n  N_{n-1}^2 F(N_{n-1}) \\
	&< 	\delta^{-1}(N_n) - 1 \\
	&< s.
\end{align*}%
Now define 
\[
S  \coloneqq \{  (N_n + j t^{N_n},f(N_n + j t^{N_n})) \mid 1 \leq j \leq F(N_n)\}
\]
and note that $S \subseteq C_s$ by the above computation.

Define $h \colon \cO_K \to \cO_K \colon x \mapsto g(x,f(x))$. 
By Corollary \ref{cor:jac} we may assume that, possibly after increasing $n$ (and $s$),
\[
\abs{h(N_n + jt^{N_n}) - h(N_n))} \leq \abs{t^{N_n}}
\]
for all $j \in \NN$.
As $N_n \geq \delta(s) \geq e(s)$, this implies that all points of $S$ belong to the fibre at $N_n$ of the composition
\[ 
C_s  \xrightarrow{g} \cO_K \xrightarrow{\operatorname{proj}} \frac{\cO_K}{ (t^{e(s)}) }. 
\]
But by construction of $F$, the inequality $N_n \geq \delta(s)$ also implies $F(N_n) > N(s)$.	
As $\# S = F(N_n)$, we have found a fibre containing more than $N(s)$ elements of $C_s$.
\end{proof}

%\begin{remark} \label{re:optimal}
%Take $f$, $C$ as above and write
%\[ 
%p_n(x) =  t^{N_n} x^{N_n} \prod_{i,k =1}^{N_n} \prod_{j =1}^{F(N_n)} (x - i -j t^k).
%\]
%By letting $(N_n)_n$ grow even faster, if necessary, we may assume that $3 N_{n-1}^3 F(N_{n-1}) < N_n$. Then one computes that for each $a \in k[t]$ with $a \neq i + j t^k$ for all $i,j,k \in \NN$ there is some $n_0 \in \NN$ such that for each $n \geq n_0$ the $t$-degree of $p_n(a)$ is strictly smaller than the (additive) $t$-adic valuation of $p_{n+1}(a)$. This implies that $f(a) \notin k[t]$.
%Hence $C_s$ is finite for each $s \in \NN$.
%In particular, for each fixed $s$, there exists a $g_s$ such that
%\[
%C \xrightarrow{g_s} \cO_K \xrightarrow{\operatorname{proj}} \cO_K/(t)
%\]
%has fibres of size $1$ on $C_s$. It is thus crucial that the map $f$ in the definition of the counting dimension (Definition~\ref{def:counting.dim}) is independent of $s$.
%\end{remark}

\subsection{Algebraic curves}\label{sec:algebraic.curves}

In this section we prove Theorem \ref{thm:alg.curves}, following along the lines of \cite[Sec.\,5]{CCL-PW}. We need some results on Hilbert functions.

For $r$ a positive integer, denote by $K[x_0, ..., x_n]_r$ the space of homogeneous degree $r$ polynomials. For $I$ a homogeneous ideal in $K[x_0, ..., x_n]$, let $I_r = K[x_0, ..., x_n]_r\cap I$ and denote by $H_I(r) = \dim K[x_0, ..., x_n]_r / I_r$ the Hilbert function of $I$. Let $<$ be the monomial order on $K[x_0, ..., x_n]$ defined by $x^\alpha < x^\beta$ if $|\alpha|<|\beta|$ or $|\alpha|=|\beta|$ and $\alpha_i > \beta_i$ for some $i$ and $\alpha_j = \beta_j$ for $j<i$. After reordering the variables, this is the graded reverse lexicographic order on monomials. Denote by $\LT(I)$ the ideal of leading terms of $I$, where the leading term of a homogeneous element $p(x)$ of $K[x_0, ..., x_n]$ is the monomial in $p(x)$ which is maximal for $<$. Then $I$ and $\LT(I)$ have the same Hilbert functions, by \cite[Ch.\,9 Prop.\, 3.9]{CoxLittleShea}. For $i\in \{0, ..., n\}$ define
\[
\sigma_{I,i}(r) = \sum_{|\alpha| = r, x^\alpha\notin \LT(I)} \alpha_i
\]
and note that $rH_I(r) = \sum_i \sigma_{I,i}(r)$. Let $X$ be an irreducible variety in $\PP^n_K$ of degree $d$ and dimension $m$, with homogeneous ideal $I$. The Hilbert function $H_I(r)$ of $I$ agrees with the Hilbert polynomial $P_X(r)$ of $X$, for $r$ sufficiently large. Recall that this is a degree $m$ polynomial whose leading coefficient is $d/m!$. By \cite{Broberg}, for $i=0, ..., n$ there are real numbers $a_{I,i}\geq 0$ such that
\[
\frac{\sigma_{I,i}(r)}{rH_I(r)} = a_{I,i} + O_{n,d}(1/r), \quad \text{ for }r\to \infty.
\]
Note also that $a_{I,0} + ... + a_{I,n} = 1$. We can now prove Theorem \ref{thm:alg.curves}.

\begin{proof}[Proof of Theorem \ref{thm:alg.curves}]
We have an irreducible algebraic curve $C$ in $\AA_K^2$ of degree $d$. Put $C' = C(K)\cap \cO_K^2$. Consider the embedding
\[
\iota: \AA_K^2\to \PP_K^2: (x,y) \mapsto (1:x:y)
\]
and let $I$ be the homogeneous ideal of the closure of $\iota(C)$ in $\PP_K^2$. Let $\delta$ be a positive integer, which we will choose later depending on $s$, and define
\[
M(\delta) = \{ j\in \NN^3\mid |j| = \delta, x^j\notin \LT(I)\}.
\]
Let $\mu = \# M(\delta) = H_I(\delta), \sigma_i = \sigma_{I,i}(\delta)$ for $i=0,1,2$ and put $e = \mu(\mu-1)/2$. By Theorem \ref{thm:Tr.param.curves} there exist finitely many maps $\phi_1, ..., \phi_N: Y_i\subset \cO_K \to C'$ forming a $T_r$-parametrization of $C'$. Let $B_{\alpha}$ be a closed ball in $\cO_K$ of radius $|t|^\alpha$ for some integer $\alpha$. Fix a positive integer $s$, take points $y_1, ..., y_\mu$ in $(\phi_i(B_\alpha\cap Y_i))_s$ and consider the determinant
\[
\Delta = \det\left( \iota(y_i)^j \right)_{j\in M(\delta), 1\leq i\leq \mu}.
\]
By Lemma \ref{lem:determinant.estimate} we have that $|\Delta | \leq |t|^{\alpha e}$. On the other hand, since $\iota(y_i)$ has coordinates which are polynomials of degree $<s$, we find that $\Delta$ is in $\tilde{k}[t]$ of degree
\[
\deg_t \Delta \leq (s-1)(\sigma_1 + \sigma_2).
\]
Therefore, if we take $\alpha>(s-1)(\sigma_1+\sigma_2)/e$, then $\Delta = 0$. As in the proof of Lemma \ref{lem: Lemma 5.1.3 revisited}, using the determinant method, we can find a polynomial $H$ in two variables with coefficients in $\tilde{k}[t]$, and exponents in $M(\delta)$ which vanishes on $(\phi_i(B \cap Y_i))_s$. Since the exponents of $H$ lie in $M(\delta)$, we also see that $H$ does not vanish identically on $C$. By B\'ezout's theorem, the intersection of $H=0$ and $C$ consists of at most $\delta d$ points.

We want to conclude by taking $\alpha = \lceil s/d \rceil$, so we look for a suitable $\delta$ now. Similarly to the proof of \cite[Thm.\,5.1.3]{CCL-PW}, one obtains that
\[
\sigma_i / e = 2\alpha_i/d + O_d(\delta^{-1}).
\]
By a result of Salberger \cite[Lem.\,1.12]{SalbCrelle}, it follows that
\[
\frac{\sigma_1 + \sigma_2}{e} \leq \frac{1}{d} + O_d(\delta^{-1}).
\]
Hence we may take $\delta = sO_d(1)$ so that
\[
\frac{(s-1)(\sigma_1+\sigma_2)}{e} < \lceil s/d\rceil = \alpha.
\]
By Proposition \ref{prop:basic.props} we can find a definable map $f: C'\to \cO_K$ such that the composition
\[
C_s \to \cO_K \to \frac{\cO_K}{(t^{\lceil s/d\rceil})}
\]
has finite fibres of size at most $Nd\delta = NsO_d(1)$, for every $s$. Now, the number of cells $N$ required in the $T_r$-parametrization of $C$ can be made uniform in definable families. Since the set of degree $d$ curves in $K^2$ is a definable family, we may assume that $N = O_d(1)$. So we conclude that the counting dimension of $C$ is bounded by 
\[
(sO_d(1), 1, \lceil s/d\rceil). \qedhere
\]
\end{proof}

\begin{example} \label{ex:alg_optimal}
We show that one cannot improve the last component in the counting dimension for algebraic curves. Let $K$ be any equicharacteristic zero valued field equipped with a 1-h-minimal structure. Fix a pseudo-uniformizer $t$ and a lift $\tilde{k}$ of the residue field. Denote by $C$ the curve in $\cO_K^2$ defined by $y=x^d$. We claim that the counting dimension of $C$ is not bounded by $(N(s), 1, e(s))$ for any functions $N, e: \NN\to \NN$ for which $e(s) < \lceil s/d \rceil$ when $s$ is sufficiently large. Let $f: C\to \cO_K$ be any definable map.

%Apply the Jacobian property (Theorem \ref{thm: jacobian property}) to $h$ to find a finite definable set $D$ in $K$ such that for every ball $B$ contained in $\cO_K$ and 1-next to $D$, there exists a $\mu_B \in \Gamma_K$ such that if $x, x'\in B$ then 
%\[
%|h(x)-h(x')| = \mu_B |x-x'|.
%\]
%We claim that we can find a ball $B$ of the form $a + \cM_K$ with $a\in \tilde{k}$ which is $1$-next to $D$ and such that $\mu_B\leq 1$. The collection of balls of the form $a+\cM_K$ for $a\in \tilde{k}$ are pairwise disjoint and hence, only finitely many of them are not $1$-next to $D$. Suppose now that for any such ball $B$ which is $1$-next to $D$ we have that $\mu_B > 1$. Since $B$ is an open ball of radius $1$, the Jacobian property then implies that $h(B)=\cO_K$ (which can only happen if $\Gamma_K^\times$ is discrete). But then there would exist infinitely many $y\in \cO_K$ such that $h^{-1}(y)$ is infinite, contradicting \cite[Lem.\,2.8.1]{CHR}. This proves the claim and we fix such a ball $B = a + \cM_K$.

Take $s' = sd+1$ sufficiently large such that $e(s') < \lceil s'/d\rceil = s+1$. In particular note that $e(s') \leq s$. Define the map $g: \cO_K\to C: x\mapsto (x,x^d)$ and put $h = f\circ g: \cO_K\to \cO_K$. By Corollary \ref{cor:jac}, there exists an $a \in \tilde{k}$,  such that for any $b\in \tilde{k}$ we have that
\[
|h(a) - h(a + bt^s)| \leq |t^s|.
\]
This implies that $h(a)\equiv h(a+bt^s) $ in $\cO_K / (t^{e(s')})$, for all $b \in \tilde{k}$. Finally, by noting that $g(a + bt^s)$ lies in $C_{s'}$, one sees that the map
\[
C_{s'} \xrightarrow{f} \cO_K \xrightarrow{\operatorname{proj}} \frac{\cO_K}{ (t^{e(s')}) }
\]
has an infinite fibre. 
\end{example}

\section{Curves with uniformly bounded counting dimension} \label{sec:QQpt}
We now consider an analytic structure on $\QQ_p((t))$ where each $\emptyset$-definable curve $C$ has an associated constant $N_C \in \NN$ such that $\cdim(C_s) \leq (N_C,1,1)$. Contrast this with Theorem~\ref{thm:optimal}, where we produced curves, definable in some analytic structure, whose counting dimension can not be bounded by any constant triple $(N,1,1)$.
The stronger upper bounds in this section result from working in a more restricted analytic structure. The essential difference with the setting of Theorem~\ref{thm:optimal} is that we now only add function symbols for power series whose coefficients do not involve the uniformizer $t$.

%In Theorem \ref{thm:optimal} we proved that one cannot improve the bound $\lceil \varepsilon \cdot s \rceil$ from theorem \ref{thm:CC-c-dim} in general $1$-h-minimal structures. We now contrast this with an analytic structure on $\QQ_p((t))$ where each $\emptyset$-definable curve $X$ has an associated constant $N_X$ such that $\cdim(X) \leq (N_X,1,1)$. The essential feature of the language used in this section, is that 
%
%The essential difference with the setting of Theorem \ref{thm:optimal} is that we now work in a analytic language where the pseudo-uniformizer $t$ does not occur
%
%Simultaneously, we show that $X_1$ can infinite for such curves $X$. In other words: the absolute number of rational points may be large, but they will always be sparse with respect to the $t$-adic topology.

Throughout this section we will take $t \in \QQ_p((t))$ as our chosen pseudo-uniformizer and $\tilde{k} = \QQ_p \subseteq \QQ_p((t))$ as our lift of the residue field.
\begin{defn}
Let $\LZpan$ be the language expanding the language of valued fields $\Lval = \{0,1,+,\cdot,\cO_K\}$ by
\begin{enumerate}
	\item a binary function symbol ``$-$'' and a unary function symbol $(\cdot)^{-1}$,
	\item a unary relation symbol $\cO_{K,\fine}$,
	\item $n$-ary function symbols for the elements of the rings of strictly convergent power series $\ZZ_p\langle x_1,\dots,x_n \rangle$, for $n \in \NN$.
\end{enumerate}
\end{defn}

The field $K = \QQ_p((t))$ admits a natural $\LZpan$-structure. We interpret $\cO_{K,\fine}$ as the valuation ring $\ZZ_p + t\QQ_p[[t]]$ and $\cO_K$ as its equicharacteristic zero coarsening $\QQ_p[[t]]$. Each function symbol $f \in \ZZ_p\langle x_1,\dots,x_n\rangle$ is interpreted naturally as a function $\cO_{K,\fine}^n \to K$. Finally, ``$-$'' and $(\cdot)^{-1}$ are just subtraction and inversion on $K$, where the latter is extended by $0^{-1} = 0$. Note that $\QQ_p((t))$, equipped with the valuation ring $\cO_K$, is $1$-h-minimal for this structure by \cite[Thm. 6.2.1]{CHR}.
\begin{theorem} \label{thm:uniform_bound}
For each transcendental curve $C \subseteq \QQ_p((t))^n$, $\emptyset$-definable in $\LZpan$, there is some constant $N_C \in \NN$ such that
\[ 
\cdim(C_s) \leq (N_C,1,1) .
\]
\end{theorem}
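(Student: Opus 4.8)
The plan is to combine the $T_r$-parametrization machinery with the special feature of the language $\LZpan$: namely that the power series we adjoin have coefficients in $\ZZ_p$, so they involve no powers of $t$. Concretely, if $\psi \colon U \subseteq \cO_{K,\fine} \to \cO_K$ is built from such strictly convergent power series, then for $x_0 \in \tilde k = \QQ_p$ the Taylor coefficients $\psi^{(i)}(x_0)/i!$ again lie in $\QQ_p = \tilde k$, so $\psi$ restricted to a ball $x_0 + \cM_K$ behaves like a power series \emph{with coefficients in the residue field}. The key consequence, which I would isolate as a lemma, is a strengthened $T_r$-approximation: a $\emptyset$-definable function $\cO_K \to \cO_K$ satisfies, away from finitely many balls $1$-next to some finite $C$, an approximation by its Taylor polynomial at a point $x_0$ in the ball whose coefficients lie in $\tilde k$, and moreover the order of approximation is controlled so that $\psi$ maps $(x_0 + \cM_K)_s$ — which after translating $x_0$ away is essentially $(t\tilde k[t]_s)$ — into $\tilde k[t]_{O(s)}$ only in a degenerate way; the real point is that the Taylor polynomial $T_{\psi,x_0}^{<r}$ has coefficients in $\tilde k$, hence is determined by finitely many residue-field parameters.

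First I would reduce, exactly as in the proof of Theorem~\ref{thm:CC-c-dim}: by Lemma~\ref{lem:projection} reduce to a planar transcendental curve $C \subseteq \cO_K^2$ which is non-algebraic up to some degree $d$, where $d$ is chosen (via Lemma~\ref{lem: Lemma 5.1.3 revisited}) so that $V/e < 1$. Then, using cell decomposition (Theorem~\ref{thm:celldecomp.1.lipschitz}) and power substitutions (Lemma~\ref{lem: T1 to Ts}), produce a $T_r$-parametrization $\psi_1, \dots, \psi_N \colon U_i \subseteq \cO_K \to C$ of $C$, with $r$ a power of the relevant exponent. The new ingredient is that, because all function symbols come from $\ZZ_p\langle x \rangle$ and the cell centers/power substitutions introduced can be arranged to be defined over $\tilde k$-points up to finitely many exceptional balls, each $\psi_i$ restricted to a ball $1$-next to a finite $\emptyset$-definable set, when re-centered at a point of $\tilde k$, has a Taylor expansion with coefficients in $\tilde k$. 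Then Lemma~\ref{lem: Lemma 5.1.3 revisited} applied with $\alpha = 1$ (using $V/e < 1$) shows that for each ball $B$ which is $1$-next to the finite bad set, $\psi_i(B \cap U_i)_s$ lies on a single algebraic hypersurface of degree $\leq d$; since $C$ is non-algebraic up to degree $d$, this intersection is finite, uniformly bounded by some $N'$ via uniform finiteness \cite[Lem.\,2.5.2]{CHR}.

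The counting dimension bound $(N_C,1,1)$ then falls out as follows: define $f \colon C \to \cO_K$ by sending a point to (a coordinate of) the center of the ball $1$-next to the finite bad set that contains its preimage under the relevant $\psi_i$ — equivalently, compose $\psi_i^{-1}$ with reduction $\cO_K \to \cO_K/\cM_K = \QQ_p = \tilde k$, lifted back into $\cO_K$. The fibers of $C_s \to \cO_K \to \cO_K/(t)$ are then exactly the sets $\psi_i(B \cap U_i)_s$ over the finitely many residue-field cosets, each of size $\leq N'$, plus the finitely many points coming from the exceptional balls and the $(0,0)$-cells; taking $N_C = N' N + (\text{number of exceptional balls})$ finishes it. Uniformity of $N_C$ over $\emptyset$-definable $C$ in a family is not claimed, so this can be left as is, but the constant is genuinely bounded for each fixed $C$.

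The main obstacle I expect is the bookkeeping showing that the $T_r$-parametrization can be taken with all the auxiliary data (cell centers, the $a_i, t^j$ in the power substitutions of Lemma~\ref{lem: T1 to Ts}, and the re-centering points) compatible with the residue-field structure — i.e.\ that after finitely many exceptional balls, the restriction of each parametrizing map to a ball $1$-next to a $\emptyset$-definable finite set has a Taylor polynomial with coefficients in $\tilde k = \QQ_p$, rather than in $\cO_K$ with genuine $t$-dependence. This is exactly the place where one uses that $\LZpan$ only adjoins power series over $\ZZ_p$ (no $t$ in the coefficients) and that $\cO_K$ is the coarsening whose residue field is $\QQ_p$; one must check that subtraction, inversion, and composition of such functions, together with the derivative and Jacobian-property data, all stay within this class up to finitely many balls. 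Making this precise — perhaps by an explicit induction on term complexity, or by noting that the relevant finite $\emptyset$-definable ``bad sets'' from Theorems~\ref{thm: jacobian property} and~\ref{thm: taylor approximation} are themselves $\emptyset$-definable and hence, together with $\tilde k$, partition $\cO_K$ into the good balls — is the technical heart of the argument.
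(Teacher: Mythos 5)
Your proposal takes a genuinely different route from the paper, and unfortunately it has a gap at the crucial step. The paper does not re-run the $T_r$-parametrization/determinant machinery at all. Instead, its key input is Lemma~\ref{lem:l-isom} (the maps $\tau_\lambda: \sum a_j t^j \mapsto \sum a_j(\lambda t)^j$, for $\lambda \in \ZZ_p^\times$, are $\LZpan$-automorphisms), which via Proposition~\ref{prop:X1_catches_all} yields the strong rigidity statement $C_s \subseteq C_1$ for \emph{every} $s$: a height-$s$ point $(\sum a_{ij}t^j)_i$ on the transcendental curve $C$ would give rise to a $\ZZ_p^\times$-orbit of points $(\sum a_{ij}(\lambda t)^j)_i$ all lying on $C$, and this orbit is contained in a semialgebraic curve, contradicting transcendence unless the point already has height $1$. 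With $C_s = C_1$ in hand, the theorem reduces to bounding $\cdim(C_1)$, which the paper does directly by cell decomposition (Theorem~5.7.3 of \cite{CHR}) and uniform finiteness — no determinant method needed.

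The gap in your argument is the application of Lemma~\ref{lem: Lemma 5.1.3 revisited} with $\alpha = 1$. That lemma requires $\alpha > sV/e$, so with $\alpha$ fixed at $1$ the hypothesis is only met for $s < e/V$; choosing $d$ so that $V/e < 1$ does not help once $s \ge e/V$. Your hoped-for rescue — that the Taylor polynomials of the parametrizing functions have coefficients in $\tilde k$ — is left vague and does not obviously repair this: the determinant estimate is a bound on $\ord_t \Delta$ versus $\deg_t \Delta$, and having $\tilde k$-coefficients in the Taylor polynomial does not by itself improve $\ord_t \Delta \geq \alpha e$ to something $s$-independent, nor does it show $\psi_i(B \cap U_i)_s$ lands in a degree-$d$ hypersurface for all $s$. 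What your observation about $\tilde k$-coefficients is really pointing at is the same phenomenon encoded more cleanly by $\tau_\lambda$ — but the automorphism version is what gives a statement uniform in $s$, and that is the step you are missing. In short: the parametrization route is the wrong tool here; identify the automorphism $\tau_\lambda$, prove $C_s = C_1$ from it and transcendence, and then the rest (cell decomposition of a planar piece, uniform finiteness of fibers) is short and elementary.
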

\begin{remark}
As before, the constant $N_C$ can be made uniform in definable families.
\end{remark}
%
%\begin{remark}
%Compare Theorem \ref{thm:uniform_bound} and Example \ref{ex:p-adic} to Theorem \ref{thm:optimal}.
%The latter Theorem, applied with $k = \QQ_p$ yields a different analytic structure on $\QQ_p((t))$, with definable transcendental curves $Y$ whose counting dimension can certainly not be bounded by $(N_Y,1,1)$, for any constant $N_Y$.
%
%The stronger bounds of this section essentially follow the fact that $t$ does not appear anywhere in the language $\LZpan$. Note, for example, that all the strictly convergent power series in $\ZZ_p\langle x \rangle$ only have coefficients $\ZZ_p \subsetneqq \ZZ_p[t]$. Thus the power series $f$ considered in Theorem \ref{thm:optimal} are not included in $\LZpan$, as their coefficients depended on $t$ in an essential way.
%In fact, it follows from the below lemma that they are not definable in this $\LZpan$-structure. Indeed, they do not commute with the map $\tau_{\lambda}$ considered below.
%Also note that this lemma already fails for e.g. $\LZpan \cup \{t\}$.
%\end{remark}
%
%
\begin{lem} \label{lem:l-isom}
For all $\lambda \in \ZZ_p^{\times}$ the map
\[ 
\tau_{\lambda} \colon \QQ_p((t)) \to \QQ_p((t)) \colon \sum_{j = k}^{+ \infty} a_j t^j \mapsto \sum_{j = k}^{+ \infty} a_j (\lambda t)^j 
\]
is an $\LZpan$-automorphism.
\end{lem}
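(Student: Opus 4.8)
The statement asserts that $\tau_\lambda$ is an $\LZpan$-automorphism for each $\lambda\in\ZZ_p^\times$. My plan is to first check that $\tau_\lambda$ is a ring automorphism of $\QQ_p((t))$, then verify it preserves each of the symbols in the language $\LZpan$: the predicates $\cO_K$ and $\cO_{K,\fine}$, the function symbols ``$-$'' and $(\cdot)^{-1}$, and finally the strictly convergent power series $f\in\ZZ_p\langle x_1,\dots,x_n\rangle$, which is where the real content lies.

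\textbf{First steps (the easy part).} Note $\tau_\lambda$ is simply the $\QQ_p$-algebra map determined by $t\mapsto\lambda t$; its inverse is $\tau_{\lambda^{-1}}$, so it is bijective and a ring automorphism. Since $\val(\tau_\lambda(x))=\val(x)$ for all $x$ (as $\lambda\in\ZZ_p^\times$ has valuation zero and the valuation is $t$-adic), $\tau_\lambda$ preserves the coarsened valuation ring $\cO_K=\QQ_p[[t]]$. For the finer predicate, observe that $\tau_\lambda$ fixes $\QQ_p$ pointwise and sends $t\QQ_p[[t]]$ onto itself, hence preserves $\cO_{K,\fine}=\ZZ_p+t\QQ_p[[t]]$. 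Compatibility with ``$-$'' is immediate from additivity, and compatibility with $(\cdot)^{-1}$ (extended by $0^{-1}=0$) follows since $\tau_\lambda$ is a field automorphism sending $0$ to $0$.

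\textbf{The main point: strictly convergent power series.} The heart of the proof is to show that for each $f\in\ZZ_p\langle x_1,\dots,x_n\rangle$ and each tuple $\bar a\in\cO_{K,\fine}^n$ one has $\tau_\lambda(f(\bar a))=f(\tau_\lambda(\bar a))$, where the right-hand side uses the interpretation of the symbol $f$ applied to the (again $\cO_{K,\fine}$) tuple $\tau_\lambda(\bar a)$. Write $f=\sum_{i\in\NN^n}c_i x^i$ with $c_i\in\ZZ_p$ and $c_i\to 0$. The key observations are: (i) each coefficient $c_i\in\ZZ_p$ is fixed by $\tau_\lambda$; (ii) $\tau_\lambda$ is continuous for the $t$-adic topology on $\QQ_p((t))$ (indeed it is an isometry for the $t$-adic absolute value $|\cdot|$, since it preserves $\val$) and $\cO_{K,\fine}$ is $t$-adically closed; (iii) the defining series of $f$ converges $t$-adically when evaluated at any $\bar a\in\cO_{K,\fine}^n$, because $|a_j|\le 1$ and $c_i\to 0$ forces $|c_i a^i|\to 0$. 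Since $\tau_\lambda$ is a ring homomorphism it commutes with finite partial sums $\sum_{|i|\le M}c_i a^i$; applying continuity and passing to the limit $M\to\infty$ gives $\tau_\lambda(f(\bar a))=\sum_i c_i\,\tau_\lambda(a)^i=f(\tau_\lambda(\bar a))$. One also should record that $\tau_\lambda$ maps $\cO_{K,\fine}^n$ bijectively onto itself, so the expression $f(\tau_\lambda(\bar a))$ is always defined and matches the interpretation. The main obstacle, such as it is, is purely bookkeeping: one must be careful that the function symbols $f$ are interpreted as extended by $0$ outside $\cO_{K,\fine}^n$, so one checks separately (using the preservation of $\cO_{K,\fine}$) that $\tau_\lambda$ sends tuples outside the domain to tuples outside the domain, where both sides equal $0$.

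\textbf{Conclusion.} Having verified compatibility with every symbol of $\LZpan$, we conclude that $\tau_\lambda$ is an $\LZpan$-automorphism; its inverse $\tau_{\lambda^{-1}}$ is then automatically one as well. I expect no genuine difficulty beyond the convergence/continuity argument in the power-series step, which is routine given the ultrametric estimates.
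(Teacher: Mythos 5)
Your outline is right and the easy checks (ring automorphism, preservation of $\cO_K$ and $\cO_{K,\fine}$, compatibility with ``$-$'' and $(\cdot)^{-1}$, the extension by $0$ outside the domain) are all fine. However, the crucial step for the function symbols $f\in\ZZ_p\langle x_1,\dots,x_n\rangle$ has a genuine gap: your claim that the series $\sum_i c_i a^i$ converges $t$-adically is false. The coefficients $c_i\in\ZZ_p$ tend to $0$ \emph{$p$-adically}, but viewed inside $\QQ_p((t))$ a nonzero $c_i\in\QQ_p$ has $t$-adic absolute value $1$, so $|c_i a^i|_t$ does not go to $0$ in general. A concrete counterexample: take $f=\sum_{m\ge 0}p^m x^m\in\ZZ_p\langle x\rangle$ and $a=1\in\cO_{K,\fine}$; the partial sums $\sum_{m\le M}p^m$ are Cauchy $p$-adically (they converge to $1/(1-p)\in\ZZ_p$) but have successive differences of $t$-adic value $0$, so they do not form a $t$-adic Cauchy sequence. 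Thus ``continuity for the $t$-adic topology plus passing to the limit'' does not apply as written, because the limit you want to interchange does not exist in that topology.

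The step can be repaired: the series $\sum_i c_i a^i$ does converge for the topology of the \emph{fine} valuation with valuation ring $\cO_{K,\fine}=\ZZ_p+t\QQ_p[[t]]$ (this is how the interpretation of $f$ is actually defined), and $\tau_\lambda$ is an isometry for that valuation too, since $\lambda\in\ZZ_p^\times$ implies $\lambda^j a_j$ has the same $p$-adic valuation as $a_j$ for every $j$. Alternatively, you could Taylor-expand $f$ around the constant terms $(a_{i0})\in\ZZ_p^n$ to write $f(x)=\sum_\alpha\bigl(\partial^\alpha f(a_0)/\alpha!\bigr)y^\alpha$ with coefficients in $\ZZ_p$ and $y_i\in t\QQ_p[[t]]$; \emph{this} series does converge $t$-adically and your argument then goes through. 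The paper avoids the topology altogether: it writes $f(x)=\sum_s f_s((a_{ij})_{i,j})\,t^s$ and observes that each $f_s$ is quasi-homogeneous of degree $s$ when $a_{ij}$ is given weight $j$, so that $\tau_\lambda$ multiplies $f_s$ by $\lambda^s$, exactly matching the substitution $t^s\mapsto(\lambda t)^s$. This is cleaner in that it never needs to name the right topology, but either the fine-valuation argument or the Taylor-expansion argument would also work once the convergence claim is stated for the correct topology.
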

\begin{proof}
Fix some $\lambda \in \ZZ_p^{\times}$.
It is clear that $\tau_{\lambda}$ fixes the constants $0,1$ and that the relations
$ x \in \QQ_p[[t]] $ and $x \in \ZZ_p + t \QQ_p[[t]]$ are invariant under $\tau_{\lambda}$.
Similarly, it is straightforward to verify that $\tau_{\lambda}$ commutes with addition, multiplication and inversion.

It thus remains to check that $\tau_{\lambda}$ commutes with all function symbols $f$ in $\ZZ_p\langle x_1,\dots,x_n \rangle$. 
Take $(x_i)_{i=1}^n = (\saij)_{i=1}^n \in \QQ_p((t))^n$.
Then $f(x)$ is computed as follows.
\[ 
f(x) = \begin{cases}
	0 & \text{ if } x_i \notin \ZZ_p + t\QQ_p[[t]] \text{ for some } i, \\
	\sum_{s = 0}^{+ \infty} f_s((a_{ij})_{i,j}) t^s & \text{ else},
\end{cases} 
\]
where each $f_s$ is some quasi-homogeneous polynomial of degree $s$ in variables $a_{ij}$, each of weight $j$, for $1 \leq i \leq n$ and $0 \leq j \leq s$.
This precisely means that $f_s(( \lambda^j a_{ij})_{i,j})  = f_s( (a_{i,j})_{i,j}  ) \lambda^s $. It follows that $f$ commutes with $\tau_{\lambda}$.
%Since we already verified that the condition in the first case is $\tau_{\lambda}$-invariant, we may assume that we are in the second case, and then
%\[ 
%f(\tau_{\lambda}(x)) = \sum_{s = 0}^{+\infty} f_s((\lambda^j a_{ij})_{i,j}) = \sum_{s=0}^{+\infty} f_s((a_{ij})_{i,j}) (\lambda t)^s, 
%\]
%since the $f_s$ are all quasi-homogeneous.
\end{proof}
We will need that transcendental curves only have finite intersection with any semialgebraic curve: one-dimensional subsets of $K^n$ definable in $\Lval \cup K$.
This follows from the fact that semialgebraic curves are locally algebraic, as made precise by the lemma below.
\begin{lem} \label{lem:transcendence_defs_equivalent}
Let $K$ be a valued field with valuation ring $\cO_K$.
Let $C$ be a transcendental curve in $K^n$.
Then, for any $K$-definable curve $X$ in $\Lval$, the intersection $C \cap X$ is finite.
\end{lem}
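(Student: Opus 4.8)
\textbf{Proof plan for Lemma \ref{lem:transcendence_defs_equivalent}.}

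The plan is to reduce the general case to the case of a plane curve and then use quantifier elimination (or its geometric consequences) for semialgebraic sets in a henselian valued field to show that a one-dimensional $\Lval\cup K$-definable set is, outside a finite set, contained in a finite union of genuine algebraic curves. Since $C$ is transcendental, it meets each such algebraic curve in a finite set, and hence meets $X$ in a finite set. So the heart of the matter is the statement: \emph{every $K$-definable curve $X\subseteq K^n$ in $\Lval$ is, up to a finite set, a subset of a finite union of algebraic curves defined over $K$.}

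First I would recall that by the theory of definable sets in henselian valued fields of equicharacteristic zero (equivalently, by applying cell decomposition, e.g.\ Theorem \ref{thm:celldecomp.1.lipschitz}, which applies since such $K$ is $1$-h-minimal in $\Lval$ after possibly expanding by predicates on $\RV$), one may partition $X$ into finitely many cells; discarding the zero-dimensional cells (which are finite), each remaining piece is, after a coordinate permutation, the graph of a definable function $\phi\colon P\to K^{n-1}$ on a one-dimensional cell $P\subseteq K$. It then suffices to show that a one-variable $\Lval\cup K$-definable function is locally algebraic: for each $x_0$ in its domain there is a neighbourhood on which the graph of $\phi$ is contained in an algebraic curve over $K$. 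Granting this, the sets $A_{x_0}$ of points admitting such an algebraic-containment neighbourhood form a definable cover of $P$; by the finiteness properties of definable families in a $1$-h-minimal structure, finitely many algebraic curves suffice to cover the graph of $\phi$ up to a finite set, giving the claim.

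The local algebraicity of a semialgebraic function is the key input. I would obtain it from quantifier elimination for henselian valued fields: a definable function in one variable, by cell decomposition and the description of cell centers, is built by composing a bounded number of times the field operations, extraction of $m$-th roots (resolved via Hensel's lemma in suitable balls), and the valuation-comparison predicate; on a sufficiently small ball all the sign/angular-component conditions are locally constant, so the function is given there by a single algebraic formula in $x$ with coefficients in $K$, hence satisfies a polynomial relation $P(x,\phi(x))=0$ over $K$ with $P\neq 0$. Alternatively one can cite the dimension theory: an $\Lval$-definable set of dimension $1$ has Zariski closure of dimension $1$, and then intersect $X$ with the finitely many irreducible components of that Zariski closure, each of which is an algebraic curve, with the residual ``lower-dimensional'' part being finite. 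This alternative essentially avoids the local analysis entirely and is probably the cleanest route.

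The main obstacle I anticipate is making precise and citing cleanly the statement that an $\Lval$-definable curve has one-dimensional Zariski closure whose ``transcendental/non-algebraic defect'' is finite — i.e.\ that $X$ minus its intersection with the union of the one-dimensional components of $\overline X{}^{\mathrm{Zar}}$ is finite. This is standard (it follows from o-minimality-style dimension theory for henselian valued fields, or from the fact that semialgebraic sets have a well-behaved dimension agreeing with the algebraic-geometric one on constructible sets), but pinning down the exact reference versus reproving it via cell decomposition is the delicate bookkeeping step. Once that is in hand, transcendence of $C$ — which by definition means finite intersection with every algebraic curve over $K$ — immediately yields that $C\cap X$ is finite, since $C\cap X$ is contained in the union of a finite set and $C\cap(\text{finitely many algebraic curves})$.
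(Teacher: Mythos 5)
Your ``alternative'' route is essentially the paper's proof, and you are right that it is the cleaner one. The paper does not reduce to plane curves at all: it applies quantifier elimination for henselian valued fields directly in $K^n$ to write $X$ as a finite union of pieces $V_j \cap S_j$, where $V_j$ is an affine $K$-variety and $S_j$ is a condition of the form $(\rv(f_i(x)))_i \in R_j$ with $R_j$ contained in a power of $\RV^\times$, hence open in the valuation topology. Because $X$ has dimension one and each $S_j$ is open, only the irreducible components of $V_j$ of dimension at most one can meet $S_j$; so if $C \cap X$ were infinite, some one-dimensional component $X' \subseteq V_j$ (an algebraic curve) would meet $C$ infinitely often, contradicting transcendence. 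This is exactly your ``Zariski closure of $X$ has dimension one'' claim, but proved on the spot from the QE normal form rather than cited, so the bookkeeping you flag as the potential obstacle is in fact the entire (short) content of the proof. Your primary route via cell decomposition, projection to a plane graph, and local algebraicity of one-variable definable functions would also work, but it re-derives from cell decomposition what falls directly out of the QE normal form and carries the extra burden of analysing cell centers; the paper avoids working with graphs altogether.
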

\begin{proof}
By valued field-quantifier elimination for henselian valued fields \cite[Prop.\,4.3]{Flen}, $C$ is a finite union of sets $V \cap S$, where $V$ is the vanishing locus of some polynomials with coefficients in $K$ and $S$ is of the form $(\rv(f_i(x)))_{i =0}^n \in R$ for certain $f_i \in K[x]$ and $R \subseteq (\RV^{\times})^n$. As $X$ is of dimension one and $S$ is open, only the zero- and one-dimensional irreducible components of $V$ can have non-empty intersection with $S$.
In particular, if $V \cap S$ met $C$ at infinitely many points, then there would be an algebraic curve $X' \subseteq V$ containing infinitely many points of $C$.
%Let $\psi(x_1,\dots,x_n)$ be an $\Lval$-formula defining $X$.
%By valued field quantifier elimination for henselian valued fields \cite[Prop.\,4.3]{Flen}, the formula $\psi(x)$ is equivalent to a a condition of the form $\varphi(\rv(f_1(x)),\dots,\rv(f_m(x)))$, where each $f_i(x)$ is a polynomial with coefficients in $K$ and $\varphi$ describes a $K$-definable subset of $\RV$ (which is an imaginary sort of $K$).
%We can further rewrite $\varphi(\rv(f_1(x)), \dots, \rv(f_m(x)) )$ as a disjunction over all subsets $I \subseteq \{1,\dots,m\}$ of conditions of the form
%\[
%\bigwedge_{i \in I} f_i(x) = 0 \land  
%\exists u_{j_1},\dots,u_{j_l} \in \RV^{\times}
%\left( \bigwedge_{j \in \{1,\dots,m\} \setminus I } (f_j(x) \neq 0 \land \rv(f_j(x)) = u_j) \land \varphi_I(u)  \right),
%\]
%where $\varphi_I(u)$ is obtained from $\varphi(y_1,\dots,y_m)$ by replacing each $y_i$ by $0$ and each $y_j$ by $u_j$, for $i \in I$ and $j \in \{1,\dots,m\} \setminus I$.
%
%Note that if for some $a \in K^n$ we have $f_j(a) \neq  0$ and $\rv(f_j(a)) = u_j$, then these conditions continue to hold on some product of nonempty open balls centered around the coordinates of $a$.
%Hence $X$ is a finite union of sets $B_I$ that are each given by intersecting a vanishing locus $V_I = V((f_i)_{i \in I})$ with an open subset of $K^n$.
%Since $X$ is one-dimensional, only the zero- and one-dimensional irreducible components of the $V_I$ survive in this intersection.
%In particular, if $C \cap X$ were infinite, there would be an algebraic curve (contained in some $V_I$) which intersects $C$ in infinitely many points.
\end{proof}
We now continue along the same lines as in the proof of \cite[Thm.\,1]{BCN}, using Lemma \ref{lem:l-isom} instead of the quantifier elimination statement used there.
\begin{prop} \label{prop:X1_catches_all}
Let $C \subseteq \QQ_p((t))^n$ be a transcendental curve which is $\emptyset$-definable in $\LZpan$.
Then $C_s \subseteq C_1$ for all integers $s >0 $. In particular, $\cdim(C_s) \leq (1,n,1)$.
\end{prop}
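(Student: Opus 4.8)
The plan is to show that rational points of any height $s$ on a transcendental $\emptyset$-definable curve $C$ already lie in $C_1$. First I would reduce to the planar case: by a cell decomposition argument (or Lemma \ref{lem:projection}) it suffices to treat a curve which is, after a coordinate permutation, the graph of a $\emptyset$-definable function $\phi \colon P \to \cO_K$ with $P \subseteq \cO_K$, and then one studies the set of $x \in \tilde{k}[t]_s = \QQ_p[t]_s$ with $\phi(x) \in \QQ_p[t]_s$. So fix such an $x = \sum_{j=0}^{s-1} a_j t^j$ with $a_j \in \QQ_p$, and suppose $x \notin \QQ_p = \tilde{k}[t]_1$, i.e.\ some $a_j \neq 0$ with $j \geq 1$; the goal is to derive that $\phi(x) \notin \QQ_p[t]_s$, or rather to derive a contradiction with $x \in C_s$, unless $x$ already lies in $C_1$.

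The key tool is Lemma \ref{lem:l-isom}: for every $\lambda \in \ZZ_p^{\times}$ the substitution $\tau_{\lambda}$ is an $\LZpan$-automorphism of $\QQ_p((t))$. Since $C$ is $\emptyset$-definable, $\tau_{\lambda}(C) = C$ for all such $\lambda$. Now if $(x, \phi(x)) \in C_s$ with $x = \sum a_j t^j$, then applying $\tau_{\lambda}$ gives $(\sum a_j \lambda^j t^j,\ \tau_\lambda(\phi(x))) \in C$ for every $\lambda \in \ZZ_p^{\times}$. As $\lambda$ ranges over $\ZZ_p^{\times}$, the first coordinate $\sum_j a_j \lambda^j t^j$ ranges over an infinite subset of $\cO_K$ (here one uses that some $a_j$ with $j \geq 1$ is nonzero, so the map $\lambda \mapsto \sum_j a_j \lambda^j t^j$ is non-constant and hence has infinite image on the infinite set $\ZZ_p^\times$), and all the resulting points lie on $C$. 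But these points all lie on an algebraic — in fact, a semialgebraic — curve in $K^2$: namely the image under the coordinate-wise action of $\tau_\bullet$ of the finite tuple $(x,\phi(x))$ is parametrized by $\lambda$ through the polynomial identities $t$-coefficient by $t$-coefficient, so it is contained in a $\QQ_p$-constructible (hence $\Lval \cup K$-definable) curve. By Lemma \ref{lem:transcendence_defs_equivalent}, a transcendental curve meets any such $\Lval \cup K$-definable curve in only finitely many points, contradicting that we produced infinitely many. Hence no such $x$ exists, i.e.\ every point of $C_s$ with $x \notin \QQ_p$ is impossible; so $C_s \subseteq C_1$. The final statement $\cdim(C_s) \leq (1,n,1)$ then follows by taking $f \colon C \to \cO_K^n$ to be the restriction to $C$ of the coordinate-wise reduction together with the trivial bound: $C_s = C_1$ reduces injectively (in fact, the fibres of $\operatorname{proj} \circ f$ over $(\cO_K/\cM_K)^n$ on $C_1$ are finite because $C_1$ sits inside $\tilde{k}^n = \QQ_p^n$ which maps bijectively to $(\cO_K/(t))^n$), and one can even just bound by a constant using uniform finiteness.

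The main obstacle I expect is making precise the claim that the $\tau_\lambda$-orbit of a single point $(x,\phi(x)) \in K^2$, as $\lambda$ varies over $\ZZ_p^\times$, is contained in a single $\Lval \cup K$-definable curve — and that this curve is genuinely $1$-dimensional, so that Lemma \ref{lem:transcendence_defs_equivalent} applies. Concretely, write $x = \sum_{j=0}^{s-1} a_j t^j$ and $\phi(x) = \sum_{j=-M}^{s-1} b_j t^j$ with fixed $a_j, b_j \in \QQ_p$ (using that $\phi(x)$ lies in some $\QQ_p[t]_s$ up to a pole of bounded order if $\phi(x) \in C_s$, though for $C \subseteq \cO_K^n$ there is no pole); then $\tau_\lambda(x,\phi(x)) = (\sum_j a_j \lambda^j t^j, \sum_j b_j \lambda^j t^j)$, whose coordinates are visibly polynomial in the single parameter $\lambda$, so the image is the projection to the relevant coordinates of a rational curve in affine space, cut out over $\QQ_p \subseteq K$. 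One then checks this curve is infinite precisely when $x \notin \QQ_p$ — equivalently when the parametrization is non-constant. Handling the bookkeeping of which coefficients are allowed to be nonzero, and confirming the resulting constructible set has dimension $\leq 1$, is the only genuinely technical point; everything else is a direct application of Lemmas \ref{lem:l-isom} and \ref{lem:transcendence_defs_equivalent}.
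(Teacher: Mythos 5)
Your proof is correct and follows essentially the same route as the paper: the key step in both is to apply the automorphisms $\tau_\lambda$ of Lemma~\ref{lem:l-isom} to a hypothetical point of $C_s \setminus C_1$, observe that the resulting $\ZZ_p^\times$-orbit is infinite and lies on a curve definable over $\Lval \cup K$, and then invoke Lemma~\ref{lem:transcendence_defs_equivalent} to contradict transcendence. The only real differences are cosmetic. First, the planar reduction at the start is unnecessary (and a bit awkward to justify cleanly, since a projection of a transcendental curve to two coordinates may degenerate); the paper just works directly with a point $(\sum_{j=0}^{s-1} a_{ij}t^j)_i \in C_s\setminus C_1$ in $K^n$, and your $\tau_\lambda$-orbit argument applies verbatim in that generality. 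Second, you parametrize the orbit directly by $\lambda\in K$ through the polynomial map $\lambda \mapsto (\sum_j a_{ij}\lambda^j t^j)_i$, whereas the paper first identifies $C_s$ with a subset of $\QQ_p^{ns}$, takes an algebraic curve $X \subseteq \QQ_p^{ns}$ through the orbit, and then pushes $X(\QQ_p[[t]])$ forward along $(x_{ij})\mapsto(\sum_j x_{ij}t^j)_i$ to get a one-dimensional $\Lval\cup K$-definable set $Y$. Both constructions exhibit the same obstruction; your parametrization is marginally more direct, while the paper's push-forward makes the dimension bound on $Y$ immediate from the definability of the pushforward map and of $X$. The concluding computation of the counting dimension via the reduction-mod-$t$ map on $C_1 \subseteq \tilde{k}^n$ is also as in the paper.
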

\begin{proof}
Suppose $(\sum_{j = 0}^{s-1} a_{ij} t^j)_i \in C_{s} \setminus C_1$.
Consider $C_{s}$ as a subset $A \subseteq \QQ_p^{n s}$ via the identification 
\[
\left(\sum_{j = 0}^{s-1} b_{ij} t^j\right)_i \mapsto (b_{ij})_{i,j}.
\]
Let $X$ be an algebraic curve in $\QQ_p^{n s}$ containing all points $( \lambda^{j} a_{ij})_{i,j}$ for $\lambda \in \QQ_p$.
By the above Lemma \ref{lem:l-isom}, it follows that $X \cap A$ contains all points $( \lambda^{j} a_{ij})$ for $\lambda \in \ZZ_p^{\times}$. In particular, it is infinite.

Let $Y$ be the image of $X(\QQ_p[[t]])$ under $(x_{ij})_{i,j} \mapsto (\sum_{j =0}^{s-1} x_{ij} t^j)_i$.
As this map and the curve $X(\QQ_p[[t]])$ are definable in the $1$-h-minimal $\cL_{\val}$-structure on $\QQ_p((t))$ (for the valuation ring $\QQ_p[[t]]$), it follows from \cite[Prop.\,5.2.4 (3,4)]{CHR} that $Y$ is an $\Lval \cup \QQ_p((t))$-definable set of dimension at most $1$.
Since the infinite set $X$ injects into $Y$, it then follows by \cite[Prop.~5.2.4 (1)]{CHR} that $Y$ has dimension exactly $1$.
Now use that $X \cap A$ is infinite, whence so is $Y \cap C_{s}$. By Lemma \ref{lem:transcendence_defs_equivalent}, this contradicts the assumption that $C$ is transcendental.
\end{proof}
%
%\begin{remark}
%The inductive scheme used to prove \cite[Thm.~1]{BCN} in higher dimensions does not carry over to our setting.
%Write $X$ as a definable family $(X_v)_v$ of curves and take $S_v$ as above, for each $X_v$.
%Let $S_v'$ be the set of points of local dimension 1 on $S_v \cap X_v$.
%Then, by Hensel minimal considerations it follows that each $Y_v =  (S_v \cap X_v) \setminus S_v'$ is finite.
%Moreover, each $Y_v$ is $\LZpan \cup \{t\}$-definable.
%However, the presence of a parameter $t$ prevents one from reapplying Lemma \ref{lem:l-isom} to $Y = \bigcup_v Y_v$.
%\end{remark}
%
\begin{proof}[Proof of Theorem \ref{thm:uniform_bound}]
By \cite[Thm. 5.7.3]{CHR} we may assume that $C$ is a single reparametrized cell $(A,\sigma)$.
As $C$ is one-dimensional, it follows that either $A$ is a finite collection of points (in which case we are done) or $A$ is of type $(1,0,\dots,0)$, up to a coordinate permutation of $K^n$.
Thus $A$ is the coordinate projection onto $K^n$ of the graph of some $\LZpan$-definable function $c \colon P \subseteq K \times \RV^{\ell} \to K^{n-1}$ (for some $\ell \in \NN$).
By \cite[Cor. 2.6.7]{CHR} and \cite[Lem. 2.5.2]{CHR} it holds that $\#{c(x,\RV^{\ell})} \leq N$ for some $N \in \NN$, independent of $x$.

Now consider the map $g \colon K^n \to K$, wich is the projection onto the first coordinate on $\cO_K^n$ and is identically zero on $K^n \setminus \cO_K^n$.
For any $x \in \cO_K/(t) \cong \tilde{k} \subseteq \cO_K$, the fibre at $x$ of the composition
\[ A_1  \xrightarrow{g} \cO_K \xrightarrow{\operatorname{proj}} \frac{\cO_K}{(t)} \]
is precisely $(\{x\} \times c(x,\RV^{\ell}) ) \cap A_1$. In particular, it has size at most $N$.
As $A_s = A_1$ for all $s \in \NN$, by proposition \ref{prop:X1_catches_all}, it follows that $\cdim(A) \leq (N,1,1)$.
\end{proof}

	\bibliographystyle{amsplain}
	\bibliography{anbib}
\end{document}